\theoremstyle{plain}
\newtheorem{theorem}{Theorem}[section]
\newtheorem{conjecture}[theorem]{Conjecture}
\newtheorem{corollary}[theorem]{Corollary}
\newtheorem{lemma}[theorem]{Lemma}
\newtheorem{proposition}[theorem]{Proposition}
\theoremstyle{definition}
\newtheorem{definition}[theorem]{Definition}
\newtheorem{notation}[theorem]{Notation}
\theoremstyle{remark}
\newtheorem{example}[theorem]{Example}
\newtheorem{question}[theorem]{Question}
\newtheorem{remark}[theorem]{Remark}
\DeclareMathOperator{\NN}{\mathbb{N}}
\DeclareMathOperator{\ZZ}{\mathbb{Z}}
\DeclareMathOperator{\RR}{\mathbb{R}}
\DeclareMathOperator{\CC}{\mathbb{C}}
\DeclareMathOperator{\PP}{\mathbb{P}}
\DeclareMathOperator{\cO}{\mathcal{O}}
\DeclareMathOperator{\Pic}{Pic}
\DeclareMathOperator{\Cone}{Cone}
\DeclareMathOperator{\Conv}{Conv}
\DeclareMathOperator{\lcm}{lcm}
\DeclareMathOperator{\LPE}{LPE}
\DeclareMathOperator{\mnorm}{mnorm}
\DeclareMathOperator{\munorm}{\mu_{\text norm}}
\DeclareMathOperator{\muva}{\mu_{\text va}}
\DeclareMathOperator{\norm}{norm}
\DeclareMathOperator{\pr}{pr}
\DeclareMathOperator{\Proj}{Proj}
\DeclareMathOperator{\RelInt}{RelInt}
\def\@tocline#1#2#3#4#5#6#7{\relax
  \ifnum #1>\c@tocdepth
  \else
    \par \addpenalty\@secpenalty\addvspace{#2}
    \begingroup \hyphenpenalty\@M
    \@ifempty{#4}{
      \@tempdima\csname r@tocindent\number#1\endcsname\relax
    }{
      \@tempdima#4\relax
    }
    \parindent\z@ \leftskip#3\relax \advance\leftskip\@tempdima\relax
    \rightskip\@pnumwidth plus4em \parfillskip-\@pnumwidth
    #5\leavevmode\hskip-\@tempdima
      \ifcase #1
       \or\or \hskip 1em \or \hskip 2em \else \hskip 3em \fi
      #6\nobreak\relax
    \dotfill\hbox to\@pnumwidth{\@tocpagenum{#7}}\par
    \nobreak
    \endgroup
  \fi}
\definecolor{pastel-pink}{HTML}{ffb3d9}
\definecolor{soft-blue}{HTML}{5eb3f6}
\definecolor{mint-green}{HTML}{66d9a6}
\definecolor{peach}{HTML}{ffac70}
\definecolor{lavender}{HTML}{b39ddb}
\definecolor{coral}{HTML}{ff6b8a}
\definecolor{yellow}{HTML}{ffdb58}
\title{Embeddings of weighted projective spaces}
\author{Praise Adeyemo}
\author{Dominic Bunnett}
\author{Fabián Levicán}
\address[Adeyemo]{University of Ibadan, Faculty of Science}
\email{ph.adeyemo@ui.edu.ng}
\address[Bunnett]{Technische Universität Berlin, Institut für Mathematik}
\email{bunnett@math.tu-berlin.de}
\address[Levicán]{Universität Wien, Fakultät für Mathematik}
\email{fabian.levican@univie.ac.at}
\date{\today}
\begin{document}

\begin{abstract}
    Let $X$ be a projective toric variety of dimension $n$ and let $L$ be a ample line bundle on $X$.
    For $k \geq 0$, it is in general difficult to determine whether $L^{\otimes k}$ is very ample and whether it additionally gives a projectively normal embedding.
    These two properties are equivalent to the \emph{very ampleness}, respectively \emph{normality}, of the corresponding polytope. By a result of Ewald-Wessels, both statements are classically known to hold for $k \geq n - 1$.

    We study embeddings of weighted projective spaces $\mathbb{P}(a_0, \ldots, a_n)$ via their corresponding rectangular simplices $\Delta(\lambda_1, \ldots, \lambda_n)$.
    We give multiple criteria (depending on arithmetic properties of the weights $a_i$) to obtain bounds for the power $k$ which are sharp in many cases.
    We also introduce combinatorial tools that allow us to systematically construct families exhibiting extremal behaviour.
    These results extend earlier work of Payne, Hering and Bruns-Gubeladze.
\end{abstract}

\maketitle

\tableofcontents

\section{Introduction}

The question of determining which powers of an ample line bundle are very ample is a difficult one in algebraic geometry.
The problem is related to Fujita’s conjecture, where the same question is asked but after twisting by the canonical bundle.

More generally, projective embeddings raise a host of natural questions.
Given a projective variety together with an ample line bundle, one may ask: for which power of the line bundle is the embedding  projectively normal?
When is its image defined by quadrics?
Both properties are known to hold asymptotically, yet sharp bounds are delicate to obtain.
These are the first of the so-called Green-Lazarsfeld conditions, $N_0$ and $N_1$.
In this paper we are concerned with $N_0$ (projective normality) and very ampleness.

To formalise this, we introduce two invariants.
For an ample line bundle $L$ on a projective scheme $X$, we define the normality index $\munorm(X,L)$ as the smallest integer such that for every $m\geq \munorm(X,L)$ the tensor power $L^{\otimes m}$ is very ample and defines a projectively normal embedding.
Similarly, the very ample index $\muva(X,L)$ records the smallest exponent for which the bundle is very ample.

In this paper we investigate these indices in the specific case of weighted projective spaces.
Since the Picard rank is one, we can talk about the index of the weighted projective space itself, where the line bundle in question is taken to be the ample generator of the Picard group, denoted $\muva(\PP(a_0, \dots , a_n))$ and $\munorm(\PP(a_0, \dots , a_n))$.

Moreover, using toric geometry, these questions are equivalent to combinatorial problems about lattice polytopes, which have been the focus of intensive study in discrete geometry and are important in applications such as integer programming.
In the case of weighted projective spaces, the relevant polytopes are simplices, whose discrete geometry encodes the embedding properties of the line bundle.

There is a classical general bound due to Ewald and Wessels: for toric varieties, normality (and therefore very ampleness) can be guaranteed once the degree of the line bundle is larger than one less that the dimension.
Concretely, this says that for $\PP = \PP(a_0, \dots , a_n)$ we have that $\muva(\PP) \leq \munorm(\PP) \leq n-1$.

For $k \in \ZZ_+$ and a polytope $P \subset \RR^n$, we say that it satisfies the \emph{$\mathit{k}$ lattice points on edges} or $\LPE(k)$ property if each edge of $P$ contains at least $k$ lattice points.
Fujita’s conjecture for singular toric varieties was proven by Payne \cite{Payne:2006}. Payne’s argument shows that if a simplex satisfies the $\LPE(n)$ property, then the corresponding line bundle is very ample.
For completeness, and to set the stage, we provide a streamlined version of Payne’s proof in Section~\ref{section-general-results}.
Later, in \cite{Gubeladze:2012}, Gubeladze proved that if a polytope satisfies the $\LPE(4n(n+ 1))$ property, then the corresponding embedding is projectively normal.

Using Payne's criterion and another simple result we get the following for a well-formed projective space $\PP = \PP(a_0,\dots , a_n)$:

\begin{theorem}(Corollary \ref{corollary:distinct-weights})
    Suppose that for every $a_i \neq a_j$ we have $\gcd(a_i,a_j)=1$.
    Then $\muva(\PP) = 1$.
    That is, every ample line bundle is very ample.
\end{theorem}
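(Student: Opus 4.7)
My plan is to verify Payne's LPE$(n)$ criterion for the polytope $\Delta$ of the ample generator of $\Pic(\PP)$. The ``another simple result'' alluded to in the lead-in should provide the identification of this generator: for a well-formed $\PP = \PP(a_0,\ldots,a_n)$, the Picard group is $\ZZ \cdot \cO(d)$ with $d = \lcm(a_0,\ldots,a_n)$, and its polytope $\Delta$ is the rectangular simplex whose vertex $w_i$ corresponds to the monomial $x_i^{d/a_i}$, living in the affine hyperplane $\{(m_0,\ldots,m_n) : \sum a_k m_k = d\}$.

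I would then read off edge counts. Solving $a_i m_i + a_j m_j = d$ in non-negative integers shows the edge $\overline{w_i w_j}$ contains exactly $d/\lcm(a_i,a_j) + 1$ lattice points. Under the coprimality hypothesis, $\lcm(a_i,a_j) = a_i a_j$ whenever $a_i \neq a_j$. In the natural setting of pairwise distinct weights, pairwise coprimality then forces $d = \prod_k a_k$, so the minimum edge count, attained on the edge between the two largest weights, is $\prod_{k=0}^{n-2} a_k + 1$. Using the elementary bound $a_k \geq k+1$ for pairwise distinct positive integers (after sorting), this is at least $(n-1)! + 1 \geq n$ for every $n \geq 1$. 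Hence $\Delta$ satisfies LPE$(n)$, and Payne's criterion yields very ampleness of $\cO(d)$, proving $\muva(\PP) = 1$.

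The main obstacle is bookkeeping at two points: doing the Diophantine lattice-point count cleanly in the non-standard lattice $\{m \in \ZZ^{n+1} : \sum a_k m_k = d\}$, and verifying the elementary inequality $(n-1)! \geq n - 1$. If one insists on allowing repeated weights (which the hypothesis literally permits, since it only constrains pairs with $a_i \neq a_j$), then the distinct values count $m$ can be strictly less than $n+1$ and the product bound above degrades on the edges between the two largest distinct weights; here the simple result must step in to certify very ampleness directly, for example by reducing to the very ample restriction of $\cO(d)$ on the subvariety $\PP(a_i, a_j) \hookrightarrow \PP$ combined with a separation-of-points check across repeated-weight strata. Coupling this with Payne on the long edges finishes the proof.
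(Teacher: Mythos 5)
Your treatment of the distinct-weights case is exactly the paper's argument: identify the polytope of the generator $\cO(l)$, $l=\lcm(a_0,\dots,a_n)$, count the lattice points on the edge joining the vertices for $x_i$ and $x_j$ as the number of solutions of $a_i\alpha+a_j\beta=l$ (namely $l/\lcm(a_i,a_j)+1$), use pairwise coprimality and distinctness to bound this below by $(n-1)!+1\geq n$, and invoke Payne's $\LPE(n)$ criterion (Theorem \ref{theorem:Payne-LPE}). That part is complete and correct.

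The genuine gap is the repeated-weights case, which the hypothesis explicitly allows and which you correctly observe breaks the product bound (e.g.\ for $\PP(1,\dots,1,p,q)$ the edge between the $p$- and $q$-vertices has only $2$ lattice points, so $\LPE(n)$ fails outright for $n\geq 3$). Your proposed patch --- restricting $\cO(l)$ to the subvarieties $\PP(a_i,a_j)$ and performing a ``separation-of-points check across repeated-weight strata'' --- is not an argument: very ampleness of the restriction of a line bundle to a family of subvarieties does not certify very ampleness on the ambient variety, and no mechanism is given for the separation claim. You also misidentify the ``other simple result'': it is not the identification of the Picard generator but Proposition \ref{proposition:repeated-weights}, which says that dropping a repeated weight changes neither $\muva$ nor $\munorm$ (proved by the coordinate-collapsing map $(x_0,\dots,x_{n+1})\mapsto(x_0+x_1,\dots,x_n+x_{n+1})$ on simplices, with explicit lifts of lattice points along the fibres). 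With that proposition you may assume all weights are distinct, and crucially the ambient dimension drops along with the number of weights, so the $\LPE$ threshold drops too and your factorial estimate closes the proof. Without it (or an equivalent reduction), the case of repeated weights remains unproved.
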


This mirrors the situation for smooth toric varieties.
Note that we do not say whether the corresponding embedding is projectively normal.

The next result says that maximising very ampleness (and thus normality) indices requires assuming distinct weights.
\begin{theorem}(Corollary \ref{corollary:max-muva-distinct-weights})
        If $\muva(\PP) = n-1$ then $a_i \neq a_j$ for $i \neq j$.
        The same is true for $\munorm$.
\end{theorem}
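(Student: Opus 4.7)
The strategy is to prove the contrapositive: if $a_i = a_j$ for some $i \neq j$, then $\muva(\PP) \leq n-2$, and analogously $\munorm(\PP) \leq n-2$, which together with Ewald--Wessels precludes $\muva(\PP) = n-1$. Since $\PP(a_0,\ldots,a_n)$ is invariant under permutation of the weights, I would first relabel so that the repeated pair lies in $\{a_1,\ldots,a_n\}$; this forces $\lambda_i = \lambda_j$ in the rectangular simplex $\Delta(\lambda_1,\ldots,\lambda_n)$ associated to the ample generator (recall $\lambda_k$ is essentially $L/a_k$ with $L = \lcm(a_0,\ldots,a_n)$), and the task reduces to showing that $(n-2)\Delta$ is very ample (respectively, gives a projectively normal embedding).

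The natural first move is Payne's criterion. The edges of $(n-2)\Delta$ carry $(n-2)\lambda_k + 1$ lattice points (origin to an axis vertex) and $(n-2)\gcd(\lambda_k,\lambda_\ell)+1$ between two axis vertices. In particular, the distinguished edge between $(n-2)\lambda_i e_i$ and $(n-2)\lambda_j e_j$ carries $(n-2)\lambda_i + 1$ points, since $\gcd(\lambda_i,\lambda_j) = \lambda_i$. In the favourable case where $\lambda_k \geq 2$ for every $k$ and $\gcd(\lambda_k,\lambda_\ell) \geq 2$ for every pair, $\LPE(n)$ on $(n-2)\Delta$ follows at once (as $2(n-2) \geq n-1$ for $n \geq 3$), and Payne's criterion concludes.

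The hard part will be the remaining degenerate case, where some $\lambda_k = 1$ (equivalently $a_k = L$) or some $\gcd(\lambda_k,\lambda_\ell) = 1$, so that $\LPE(n)$ fails on some edge of $(n-2)\Delta$. Here my plan is to verify very ampleness directly at each vertex: check that the translated lattice points $((n-2)\Delta \cap \ZZ^n) - v$ generate the lattice semigroup of the local cone at $v$. The symmetry $\lambda_i = \lambda_j$ supplies a primitive lattice vector $e_j - e_i$ along the $ij$-edge together with a denser set of lattice points in the 2-face $\Conv(0, \lambda_i e_i, \lambda_j e_j)$. Combined with the reflection swapping axes $i$ and $j$, these extra generators close the semigroup at every vertex precisely where Payne alone does not suffice. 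For $\munorm$, the same case split is upgraded to the integer decomposition property of $(n-2)\Delta$, decomposing any lattice point of $k(n-2)\Delta$ into $k$ lattice points of $(n-2)\Delta$; the $ij$-edge again furnishes the decompositions that are unavailable without the $\lambda_i = \lambda_j$ assumption.
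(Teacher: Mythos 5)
Your target (the contrapositive: a repeated weight forces $\muva(\PP)\le n-2$ and $\munorm(\PP)\le n-2$) is the right one, but the argument you sketch for it has real gaps. The case you call ``hard'' is not an argument: asserting that $e_j-e_i$, the lattice points of the $2$-face $\Conv(0,\lambda_ie_i,\lambda_je_j)$, and ``the reflection swapping axes $i$ and $j$'' close up every vertex semigroup of $(n-2)\Delta$ is precisely the statement that needs proof, and the symmetry is not the mechanism that makes it true. What does work is to \emph{collapse} the two equal coordinates: the map sending $(x_i,x_j)$ to $x_i+x_j$ carries the simplex onto the $(n-1)$-dimensional simplex obtained by deleting one copy of the repeated entry, is surjective on lattice points in every dilation, and lets you lift any decomposition of a lattice point back upstairs; this shows the indices of $\PP$ and of the lower-dimensional space with the repeated weight dropped coincide (Proposition \ref{proposition:repeated-weights}), and then Ewald--Wessels (Theorem \ref{theorem:EW}) applied in dimension $n-1$ gives the bound $n-2$ for both indices at once, with no case analysis. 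Moreover, even your ``favourable'' case only proves very ampleness: Payne's criterion ($\LPE(n)$) does not yield normality --- the gap between the two is the theme of the whole paper, where normality needs Gubeladze's $\LPE(4n(n+1))$ --- so the claim $\munorm(\PP)\le n-2$ is not established in any case, and your one-line ``upgrade to the integer decomposition property'' is again a restatement of what must be shown rather than a proof.

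A further, smaller issue: the corollary concerns arbitrary well-formed $\PP(a_0,\dots,a_n)$, but your set-up assumes the polytope of the ample generator is a rectangular simplex with $\lambda_k=L/a_k$. That identification is only valid when some weight equals $1$; in general the relevant polytope is $\Conv(l_0e_0,\dots,l_ne_n)$ from Definition \ref{definition:weighted-simplex}, which need not be rectangular, so the relabelling step does not cover the general statement. The coordinate-collapsing argument above works directly on this general simplex (two equal weights give two equal vertices $l_ie_i$, $l_je_j$) and avoids this restriction.
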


The first example given in the literature (and only, until this paper and \cite{MullerPaemurru:2025}) of a weighted projective space with a strictly ample line bundle was $\PP(1,6,10,15)$ was found by Ogata \cite{Ogata:2005}.
We prove that this is, in a precise way, the 'first' example and that the arithmetic structure of the weights $(6,10,15)$ is in fact archetypal for weighted projective spaces of dimension 3.

This example shows that the Ewald-Wessels upper bound is sharp for weighted projective 3-spaces and while they provide a general construction of a simplex with maximal normality index, their example gives a so-called \emph{fake} weighted projective space.
We provide an algorithmic construction of a genuine \emph{real} weighted projective space with maximal normality index.

In \cite{EwaldWessels:1991}, they prove their result by first translating the problem into one about polytopes.
Recall that a pair $(X,L)$, where $X$ is a projective toric variety and $L$ is a ample line bundle, corresponds, up to isomorphism, to a polytope $P \subset \RR^n$.

The case of a weighted projective space \emph{with at least one weight is equal to one} and an ample line bundle corresponds to that of \emph{rectangular simplices} (see Subsection \ref{subsection:weighted-projective-spaces} for more details). For the remainder of the paper, we restrict our study to this family of simplices. Properties of a weighted projective space and an ample line bundle can be translated to properties of the corresponding simplex and vice versa, so we will often give statements only in terms of the objects on one side of this correspondence.

\begin{definition}\label{definition:rectangular-simplex}
    Let $\lambda = (\lambda_1, \lambda_2, \ldots, \lambda_n) \in \ZZ_+^n$ be an $n$-tuple of positive integers. The \textbf{rectangular simplex} $\Delta(\lambda)$ corresponding to $\lambda$ is the convex hull of the set
    \[
    \{0, \lambda_1e_1, \lambda_2e_2, \ldots, \lambda_ne_n\},
    \]
    where $\{e_i\}$ is the canonical basis of $\RR^n$.
\end{definition}

In \cite{BrunsGubeladze:1999}, Bruns and Gubeladze study the normality and koszulness of these simplices. In particular, they prove that their normality is \emph{periodic} in a precise sense, and introduce weaker notions of normality they call \emph{1-normality} and \emph{almost 1-normality}. We will make heavy use of the latter throughout.

We will see that the $\LPE(k)$ property in rectangular simplices can be characterised in terms of a divisibility condition satisfied by its entries (Proposition \ref{proposition:lpe-characterisation}). In Subsection \ref{subsection:subsequences-and-extensions}, we will introduce two extremal properties of rectangular simplices and partly describe the behaviour of the normality index of a rectangular simplex under taking sub- and supersequences of its entries. Later, we will specialise a result of Hering, Schenck and Smith \cite[Theorem 1.1, Corollary 1.4]{HerinSchenckSmith:2006} to improve that of Ewald and Wessels for rectangular simplices:

\begin{proposition}(Proposition \ref{proposition:milena-bound})
    Let $r \in \ZZ_+$ and define $d(\lambda) \coloneqq \left \lfloor \sum_{i = 1}^n \frac{1}{\lambda_i} \right \rfloor$. If $r \geq n - d(\lambda)$, then $r\Delta(\lambda)$ is normal.
\end{proposition}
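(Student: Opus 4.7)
The plan is to specialise the Hering--Schenck--Smith bound \cite[Theorem 1.1, Corollary 1.4]{HerinSchenckSmith:2006}, which for a lattice polytope $P$ of dimension $n$ produces a criterion of the shape ``$rP$ is normal whenever $r \geq n - s(P)$'', where $s(P)$ is a positivity/jet--separation invariant read off from the local structure of $P$ at its vertices (equivalently, from the corresponding polarised toric variety). The classical Ewald--Wessels bound corresponds to the trivial guarantee $s(P) \geq 1$; our goal is to upgrade this to $s(\Delta(\lambda)) = d(\lambda)$ for rectangular simplices.

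Concretely, I would proceed in three steps. First, I would restate the HSS criterion in polytopal language, so that the invariant $s(P)$ becomes a purely combinatorial quantity determined by the vertex cones of $P$ and the primitive edge-generators emanating from each vertex. Second, I would compute this invariant for $\Delta(\lambda)$: the rectangular structure reduces the calculation to the origin, whose local cone is the non-negative orthant and whose $n$ outgoing edges lie along the coordinate axes with lattice lengths $\lambda_1, \dots, \lambda_n$. Each axis direction contributes $1/\lambda_i$ to the local positivity count, and summing these reciprocals and rounding down yields exactly $d(\lambda)$; the floor is forced because $s(P)$ must be a non-negative integer. Third, one plugs $s = d(\lambda)$ back into HSS to conclude that $r\Delta(\lambda)$ is normal whenever $r \geq n - d(\lambda)$.

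The hard part is Step~2: one must match the invariant as it appears in HSS, stated for general polarised toric varieties, with the explicit arithmetic expression $\lfloor \sum_i 1/\lambda_i \rfloor$. The rectangular geometry makes this tractable because the contributions from the different coordinate axes are independent, but care is required with the rounding and with boundary cases, for instance when several $\lambda_i$ equal $1$ so that $\sum 1/\lambda_i$ already exceeds~$1$. One also needs to check that no other vertex of $\Delta(\lambda)$ (i.e.\ the apices $\lambda_i e_i$) imposes a stronger constraint than the origin; since each such vertex has local edge-lengths which are at least those at the origin in the relevant senses, the origin vertex is indeed the worst case, and the bound obtained is $r \geq n - d(\lambda)$ as claimed.
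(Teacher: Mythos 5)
Your high-level plan (specialise Hering--Schenck--Smith) is the same as the paper's, but the crucial Step~2 rests on a misreading of the HSS invariant, and as written it would not go through. The quantity entering \cite[Theorem 1.1, Corollary 1.4]{HerinSchenckSmith:2006} is not a vertex-local jet-separation count obtained by letting ``each axis direction contribute $1/\lambda_i$'' at a corner of the polytope; it is a global condition on the dilates of $P$, namely which multiples $mP$ contain a lattice point in their relative interior (equivalently, when the relevant adjoint bundle acquires sections, which is what feeds into the multigraded regularity argument of HSS). There is no mechanism in HSS by which reciprocal lattice lengths of the edges at a vertex sum to the invariant, and your remark that the floor appears ``because $s(P)$ must be a non-negative integer'' is not an argument. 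The correct bridge is elementary and global: from the hyperplane description $\sum_{i}x_i/\lambda_i\le 1$ of $\Delta(\lambda)$ (Proposition \ref{proposition:lattice-point-relative-interior}) one sees that $m\Delta(\lambda)=\Delta(m\lambda)$ has an interior lattice point if and only if $\sum_{i}1/\lambda_i<m$; hence $d(\lambda)=\lfloor\sum_i 1/\lambda_i\rfloor$ is exactly the number of dilates with lattice-point-free interior, i.e.\ $d(\lambda)+1$ is the first dilate containing an interior lattice point, and this is the quantity one plugs into HSS to obtain $r\ge n-d(\lambda)$. Your proposed check that the apices $\lambda_ie_i$ do not impose stronger constraints than the origin is not a step that occurs in, or is needed for, this argument.

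There is also a smaller gap you do not address: Corollary 1.4 of \cite{HerinSchenckSmith:2006}, as stated, yields the conclusion only at (essentially) the threshold multiple, whereas the proposition claims normality of $r\Delta(\lambda)$ for \emph{every} $r\ge n-d(\lambda)$; the paper points out that this stronger uniform statement has to be extracted from \cite[Theorem 1.1]{HerinSchenckSmith:2006} rather than from the corollary alone.
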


In Subsection \ref{subsection:periodicity} we will extend \cite[Theorem 1.6]{BrunsGubeladze:1999} to show that the normality index of a rectangular simplex is periodic and give a few interesting consequences. The main result of Section \ref{section:maximally-non-normal} will be the following:

\begin{theorem}(Theorem \ref{theorem:infinitely-many-maximally-non-normal})
    Let $n \geq 3$ and $P = \{p_1, \ldots, p_{n - 1}\}$ be distinct primes such that $\sum_{i \in [n - 1]}~1/p_i~\leq~1$. Then, there exist infinitely many maximally non-normal rectangular simplices of the form $\Delta(p_1, \ldots, p_{n - 1}, p_n)$ with $p_n \notin P$ prime. Furthermore, for $N \in \ZZ_+$, there exists an algorithm (Algorithm \ref{alg:1}) that outputs $N$ such simplices in finite time.
\end{theorem}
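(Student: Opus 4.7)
The plan is to combine the periodicity of the normality index (an extension of \cite[Theorem 1.6]{BrunsGubeladze:1999}, developed in Subsection~\ref{subsection:periodicity}) with Dirichlet's theorem on primes in arithmetic progressions, after producing a single explicit residue class that witnesses maximal non-normality. The first step is to use the periodicity result to show that, with $p_1, \ldots, p_{n-1}$ fixed, the value $\munorm(\Delta(p_1, \ldots, p_{n-1}, m))$ depends only on the residue class of $m$ modulo some integer $M = M(p_1, \ldots, p_{n-1})$; a natural candidate is $M = \lcm(p_1, \ldots, p_{n-1})$ or a closely related quantity arising from the periodicity argument.

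The heart of the proof is to exhibit one integer $m_0$, coprime to $M$ and not lying in $P$, for which $\Delta(p_1, \ldots, p_{n-1}, m_0)$ is maximally non-normal. Concretely, this means producing a lattice point $v \in k(n-2)\Delta$ for some $k \geq 1$ that cannot be written as a sum of $k$ lattice points of $(n-2)\Delta$. The hypothesis $\sum_{i = 1}^{n-1} 1/p_i \leq 1$ plays two roles here: it guarantees that a canonical candidate point with many nonzero coordinates fits inside the appropriate dilate of $\Delta$, and together with the primality of the $p_i$ it forces any would-be decomposition into summands of $\Delta$ to concentrate its weight along too few coordinates, which can be ruled out by a pigeonhole/reciprocal-sum counting argument of the type already used in Subsection~\ref{subsection:subsequences-and-extensions}. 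Distinctness and primality of the $p_i$ keep the combinatorial analysis tractable by trivialising all pairwise $\gcd$'s.

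Once such an $m_0$ has been produced, Dirichlet's theorem applied to the arithmetic progression $m_0 + M\ZZ$ yields infinitely many primes $p_n$ with $p_n \equiv m_0 \pmod M$, all but finitely many of which lie outside $P$; by the first step each such $p_n$ produces a maximally non-normal rectangular simplex. Algorithm~\ref{alg:1} is then justified by iterating through candidates in this residue class, applying a standard primality test, and returning the first $N$ primes encountered; termination in finite time follows directly from Dirichlet's theorem. The main obstacle is the second step: the periodicity and Dirichlet inputs are conceptually clean once in place, whereas verifying that the candidate lattice point is genuinely indecomposable requires a careful combinatorial analysis of every partition of its coordinates into $\Delta$-admissible summands, and it is precisely here that the arithmetic hypothesis $\sum_{i=1}^{n-1} 1/p_i \leq 1$ and the primality of the entries are used in an essential way.
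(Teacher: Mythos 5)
Your outer skeleton (reduce to finding a single witness residue class, then apply periodicity of $\munorm$ and Dirichlet's theorem, with the algorithm just searching the arithmetic progression and primality-testing) is exactly the frame the paper sets up in Subsection~\ref{subsection:periodicity} (Corollary~\ref{corollary:infinitely-many-maximally-non-normal-characterisation-coprime}), so that part is fine. The genuine gap is the step you yourself flag as ``the heart of the proof'': you never actually produce the witness $m_0$, nor the certificate of maximal non-normality, and this is precisely the content that the paper identifies as previously open for large $n$. Saying that a ``canonical candidate point'' should be indecomposable by a ``pigeonhole/reciprocal-sum counting argument of the type already used in Subsection~\ref{subsection:subsequences-and-extensions}'' does not close this: that subsection contains no such argument, and for a generic residue class mod $M=p_1\cdots p_{n-1}$ the simplex is typically \emph{not} maximally non-normal, so some specific arithmetic choice of $m_0$ is unavoidable and must be exhibited.

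What the paper actually does is make this choice explicit via almost $1$-normality (Definition~\ref{definition:a1n}) rather than a direct indecomposability analysis: for $r\Delta$ one must solve $r\prod_i p_i - 1=\sum_i a_i\prod_{j\neq i}p_j$ in non-negative integers, the residues $a_i\equiv b_i \bmod p_i$ are forced, and Lemma~\ref{lemma:non-normal-dilations-criteria} shows no solution exists for any $r\in[n-2]$ once $b_i=p_i-1$ for all $i\in[n-1]$ and $\sum_{i\in[n-1]}1/p_i\leq 1$ (this is where your hypothesis enters, not as a ``fits inside the dilate'' condition). The condition $b_i=p_i-1$ pins down the residue class $p_n\equiv\bigl(\prod_{j\in[n-1],\,j\neq i}p_j\bigr)^{-1}\bmod p_i$ for each $i$, which CRT assembles into a single class $m_0$ mod $M$; one then checks $m_0$ is coprime to $M$ (needed for Dirichlet, and also missing from your sketch), and failure of A1N for all $r\leq n-2$ together with Theorem~\ref{theorem:EW} gives $\munorm=n-1$. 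Without an explicit residue class and an explicit non-representability argument of this kind, your proposal asserts rather than proves the key existence statement, so as written it does not constitute a proof.
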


To the best of our knowledge, finding even one example of a maximally non-normal rectangular $n$-simplex was previously open for high values of $n$.

Finally, in Section \ref{section:hypergraphs} we will introduce a very natural correspondence between hypergraphs and rectangular simplices (Theorem \ref{theorem:hypergraphs-rectangular-simplices-correspondence}) that encodes the incidence structure of primes and entries of the rectangular simplex. This will allow us to read the $\LPE(k)$ property in a purely combinatorial way (Proposition \ref{proposition:conditions-not-depending-on-weighting}). It will also allow us to prove almost 1-normality in many cases (Example \ref{example:hypergraphs-lpe-a1n}), by using a connection to the \emph{Frobenius coin problem.} We will also prove that the $\LPE(n)$ property and almost 1-normality cannot be used to find a rectangular simplex that is both very ample and non-normal:

\begin{theorem}(Theorem \ref{theorem:lpe-implies-a1n}) If the rectangular simplex $\Delta(\lambda) = \Delta(\lambda_1, \ldots, \lambda_n)$ has the LPE$(n)$ property, then $\Delta(\lambda)$ is almost 1-normal.
\end{theorem}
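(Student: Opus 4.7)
The plan is to combine the divisibility characterisation of LPE$(n)$ for rectangular simplices (Proposition~\ref{proposition:lpe-characterisation}) with the hypergraph/Frobenius framework of Section~\ref{section:hypergraphs} to produce, for each lattice point of $k\Delta(\lambda)$ that needs to be decomposed, an explicit splitting into a lattice point of $\Delta(\lambda)$ and a lattice point of $(k-1)\Delta(\lambda)$. The arithmetic strength provided by LPE$(n)$ is used to carry out the decomposition away from a prescribed finite exceptional set, which is exactly the content of almost 1-normality in the sense of \cite{BrunsGubeladze:1999}.

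The first step is to invoke Proposition~\ref{proposition:lpe-characterisation} to restate LPE$(n)$ as the simultaneous divisibility conditions $\lambda_i \geq n-1$ for every $i$ and $\gcd(\lambda_i,\lambda_j) \geq n-1$ for every $i \neq j$. The second condition is the structural engine of the proof: it allows one to perform the integral coordinate moves $(y_i,y_j) \mapsto (y_i - \lambda_i/d,\, y_j + \lambda_j/d)$ with $d = \gcd(\lambda_i,\lambda_j) \geq n-1$, each of which preserves the integrality of both $y$ and $x-y$, changes $\sum_i y_i/\lambda_i$ by exactly $-1/d$, and leaves $\sum_i(x_i-y_i)/\lambda_i$ correspondingly adjusted. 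I would then unpack the Bruns-Gubeladze definition of almost 1-normality for a rectangular simplex into the concrete statement that, outside a finite exceptional set of lattice points, every $x \in k\Delta(\lambda)\cap \ZZ^n$ admits a decomposition $x = y + z$ with $y \in \Delta(\lambda)\cap\ZZ^n$ and $z \in (k-1)\Delta(\lambda)\cap\ZZ^n$. The candidate $y$ is obtained by a greedy round-down of $x/k$ coordinate-wise, and, when the resulting $y$ violates the hyperplane inequality $\sum_i y_i/\lambda_i \leq 1$, the moves above are applied iteratively to push $y$ back into $\Delta(\lambda)$ while keeping $z$ in the non-negative orthant.

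The main obstacle, and where the Frobenius coin problem appears explicitly, is showing that these discrete moves are flexible enough to land both $y$ and $z$ in their respective simplices simultaneously. The hypergraph framework of Section~\ref{section:hypergraphs} provides precisely the combinatorial bookkeeping of which coordinate pairs $(i,j)$ are available for an adjustment, and the lower bound $\gcd(\lambda_i,\lambda_j) \geq n-1$ plays the role of the coprimality hypothesis guaranteeing that the relevant numerical semigroup covers all sufficiently large integers. The obstructions that remain below the Frobenius threshold are finite in number, and I would close by verifying that they are exactly of the type absorbed by the word \emph{almost} in almost 1-normality, thereby completing the proof.
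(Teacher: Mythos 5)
There is a genuine gap, and it starts with the definition you are working towards. In this paper (Definition~\ref{definition:a1n}, together with Lemma~\ref{lemma:d} and Proposition~\ref{proposition:a1n-characterisation}), almost 1-normality of $\Delta(\lambda)$ is the purely arithmetic statement that the single integer $L-1$, where $L=\lcm(\lambda_1,\ldots,\lambda_n)$, lies in the numerical semigroup generated by $L_1=L/\lambda_1,\ldots,L_n=L/\lambda_n$. Your unpacking of A1N as ``outside a finite exceptional set, every $x\in k\Delta(\lambda)\cap\ZZ^n$ decomposes as a point of $\Delta(\lambda)$ plus a point of $(k-1)\Delta(\lambda)$'' is not that definition, and the closing step of your plan --- that the finitely many obstructions below the Frobenius threshold are ``absorbed by the word \emph{almost}'' --- is exactly where the argument fails: the word \emph{almost} does not license ignoring finitely many exceptions, and $L-1$ is one specific integer that could a priori be among the non-representable values. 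Ruling that out is the entire content of the theorem, and your proposal never does it; the greedy round-down and the coordinate moves $(y_i,y_j)\mapsto(y_i-\lambda_i/d,\,y_j+\lambda_j/d)$ are never connected to a verification that $L-1$ itself is a non-negative integer combination of the $L_i$.

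What is missing is the quantitative estimate. The paper's route is: by Lemma~\ref{lemma:d} the $L_i$ have $\gcd$ equal to $1$, so it suffices to show $L-1$ exceeds an explicit upper bound for the Frobenius number $F(L_1,\ldots,L_n)$; Brauer's bound (Proposition~\ref{proposition:bounds}) is used, and the key computation is the identity
\[
L_{i+1}\,\frac{d_i}{d_{i+1}} \;=\; \frac{L}{\lcm_{j\in[i]}\bigl(\gcd(\lambda_{i+1},\lambda_j)\bigr)},
\qquad d_i=\gcd(L_1,\ldots,L_i),
\]
which converts Brauer's bound into an inequality involving only the pairwise $\gcd(\lambda_i,\lambda_j)$; the hypothesis LPE$(n)$, via Proposition~\ref{proposition:lpe-characterisation}, gives $\gcd(\lambda_i,\lambda_j)\geq n-1$ and closes the inequality. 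Your proposal correctly identifies the two relevant ingredients (the divisibility reformulation of LPE$(n)$ and the Frobenius problem), but it never states which numerical semigroup is in play, never produces a bound on its Frobenius number, and never compares that bound with $L-1$. To repair the argument you would have to abandon the lattice-point-decomposition framing, work directly with the representability of $L-1$ by $L_1,\ldots,L_n$, and supply an estimate such as the Brauer (or Erd\H{o}s--Graham/Selmer) bound together with the computation above.
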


Using our methods we can easily deduce concrete results, such as that the rectangular simplex $\Delta(\lambda) = \Delta(2, 5, 7, 11, 619)$ has maximal normality index equal to $4$ (Corollary \ref{corollary:2-5-7-11-619}).
We can also prove the following:

\begin{proposition}(Corollary \ref{corollary:2-3-p})
    Let $p\geq 5$ be a prime number and $\ell = \lcm(2,3,p) = 6p$.
    The line bundle $\cO(\ell)$ on weighted projective space $\PP(1,6,2p,3p)$ is very ample if and only if $p \equiv 1 \mod 3$.
\end{proposition}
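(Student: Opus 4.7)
The plan is to translate the statement into one about the corresponding rectangular simplex. Under the standard dictionary, $\cO(\ell)$ on $\PP(1, 6, 2p, 3p)$ with $\ell = 6p$ corresponds, up to a permutation of coordinates, to $\Delta = \Delta(2, 3, p)$. Very ampleness of $\cO(\ell)$ is equivalent to requiring, at every vertex $v$ of $\Delta$, that every lattice point of the tangent cone $C_v$ lie in the affine semigroup generated by $(\Delta - v) \cap \ZZ^3$; since each $C_v$ is simplicial, it suffices to check this on a set of representatives of the fundamental parallelepiped of the sublattice $\Lambda_v \subset \ZZ^3$ spanned by the primitive edge vectors at $v$ (these primitive edge vectors themselves are always in $\Delta - v$).

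At $v = 0$ the cone is $\NN^3$ and $e_1, e_2, e_3 \in \Delta$. At $v = pe_3$ and $v = 3e_2$ the sublattice indices $[\ZZ^3 : \Lambda_v]$ are $6$ and $2p$ respectively; writing the parallelepiped representatives explicitly and substituting them into the translated simplex inequality $3px + 2py + 6z \leq 0$ shows that each already lies in $\Delta - v$, so these vertices contribute no obstruction regardless of $p$.

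The decisive vertex is $v = 2e_1$, where $[\ZZ^3 : \Lambda_v] = 3p$. The representatives can be parametrised as $w_{j,k} = (-\lceil 2j/3 + 2k/p \rceil,\, j,\, k)$ for $(j, k) \in \{0, 1, 2\} \times \{0, \ldots, p - 1\}$. A case analysis on $j$ proceeds as follows: all $j = 0$ points are directly in $\Delta - 2e_1$; each $j = 1$ point is directly in $\Delta - 2e_1$ except when its first coordinate equals $-3$, in which case it splits as $(-2, 1, a) + (-1, 0, b)$ with $a \leq \lfloor 2p/3 \rfloor$ and $b \leq \lfloor p/2 \rfloor$; and among the $j = 2$ points, those with first coordinate $-2$ or $-4$ are either directly in $\Delta - 2e_1$ or admit the decomposition $(-2, 2, 0) + (-2, 0, k)$. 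The only remaining family is $(-3, 2, k)$ with $\lfloor p/3 \rfloor < k \leq \lfloor 5p/6 \rfloor$. A bookkeeping argument that groups the summands by their $y$-coordinate shows that $(-3, 2, k)$ is in the semigroup if and only if $k \leq \lfloor p/3 \rfloor + \lfloor p/2 \rfloor$, whereas the maximal $k$ with $(-3, 2, k) \in C_{2e_1}$ is $\lfloor 5p/6 \rfloor$. These two quantities coincide exactly when $p \equiv 1 \pmod 6$; otherwise the gap is exactly one and $(-3, 2, \lfloor 5p/6 \rfloor)$ furnishes an explicit obstruction. For odd primes $p \geq 5$ distinct from $3$, these conditions reduce to $p \equiv 1 \pmod 3$ and $p \equiv 2 \pmod 3$ respectively.

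The main obstacle is the negative direction at $v = 2e_1$: proving that $(-3, 2, \lfloor 5p/6 \rfloor)$ genuinely fails to be expressible requires ruling out \emph{every} decomposition, not only two-term ones. One has to split on how the $y$-budget of $2$ is distributed among the summands (a single $y = 2$ summand versus two $y = 1$ summands), in each case apply the sharp upper bounds on the $z$-coordinate of lattice points on the face of $\Delta$ adjacent to $2e_1$ carrying that $y$-value, and then observe that the resulting bound falls short of $\lfloor 5p/6 \rfloor$ by exactly one unit precisely in the congruence class $p \equiv 2 \pmod 3$.
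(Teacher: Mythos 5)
Your analysis at the vertex $2e_1$ is essentially correct (the box representatives $(-3,2,k)$ with $\lfloor p/3\rfloor < k \le \lfloor 5p/6\rfloor$ are indeed the critical family, and the obstruction $(-3,2,\lfloor 5p/6\rfloor)$ for $p\equiv 2 \pmod 3$ is genuine), but the step dismissing the vertices $pe_3$ and $3e_2$ is wrong, and this breaks the ``if'' direction. You only substitute the parallelepiped representatives into the inequality $3px+2py+6z\le 0$ -- but every lattice point of the tangent cone satisfies that inequality by construction; membership in $\Delta-v$ additionally requires the \emph{far} facet inequalities, namely $z\ge -p$ at $v=pe_3$ and $y\ge -3$ at $v=3e_2$, and these can fail for box points. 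Concretely, at $v=pe_3$ the primitive edge generators are $(0,0,-1)$, $(2,0,-p)$, $(0,3,-p)$, and the representative with $(x,y)=(1,2)$ is $(1,2,-\lceil 7p/6\rceil)$, which has $z<-p$ and so does \emph{not} lie in $\Delta-pe_3$; for $p=5$ it equals $(1,2,-6)$ and is in fact not in the vertex semigroup at all (so this vertex carries its own obstruction when $p\equiv 2\pmod 3$), while for $p\equiv 1\pmod 6$ it needs a decomposition argument, e.g. $(1,1,-\lceil 5p/6\rceil)+(0,1,-\lceil p/3\rceil)$. Similarly, at $3e_2$ there are representatives with $y\in\{-4,-5\}$ lying outside $\Delta-3e_2$ that must be decomposed. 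So to prove that $p\equiv 1\pmod 3$ implies very ampleness you must carry out at $pe_3$ and $3e_2$ the same kind of bookkeeping you did at $2e_1$; as written, the saturation of those two vertex semigroups is unproven (and the justification offered is false). The negative direction is unaffected, since one bad vertex suffices.

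For comparison, the paper avoids all of this: it checks computationally that $\Delta(2,3,1)$ is normal and $\Delta(2,3,5)$ is not, applies the Bruns--Gubeladze periodicity theorem (Theorem \ref{theorem:periodicity}) with period $\lcm(2,3)=6$ to conclude that normality of $\Delta(2,3,p)$ depends only on $p\bmod 6$, and then uses Ogata's result that very ampleness and normality coincide for $3$-dimensional simplices. Your direct vertex-by-vertex computation is a legitimate alternative route (and yields an explicit obstruction), but it only becomes a proof once the analysis at the remaining two vertices is completed.
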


Figure \ref{fig:implications} shows the implication graph of all the properties we will discuss in the special case of rectangular simplices. All implications are strict, except perhaps for ``normality (N) implies very ampleness (VA)''. For implications involving the $\LPE(k)$ property, the lower bound given by $k$ is sharp.

After the completion of this work, a preprint by Muller and Paemurru \cite{MullerPaemurru:2025} appeared on the \emph{arXiv}, addressing similar questions and arriving at similar results independently.
More specifically, \cite[Theorem 1.14]{MullerPaemurru:2025} is a special case of Corollary \ref{corollary:distinct-weights}, where no weights are repeated.
Furthermore, the methods they use to prove that the set of weights with a \emph{bubble} is infinite (see \cite[Section 7]{MullerPaemurru:2025}) are very similar to the ones we use in Section \ref{section:maximally-non-normal}, and may also be used to maximise the normality index of a weighted projective space \cite[Theorem 7.5]{MullerPaemurru:2025}.

\begin{notation}
    We will use the following notation throughout the paper:
    
    For a set $S$, $\# S$ is its cardinality and $\mathcal{P}(S)$ is its power set.
    
    $\NN$ is the set of non-negative integers. $\ZZ_+$ is the set of positive integers. $[n]$ is the subset $\{1, 2, \ldots, n\} \subset \ZZ_+$.

    A polytope $P \subset \RR^n$ is a convex polyhedron that is bounded, has all of its vertices in the lattice $\ZZ^n$, and whose linear span is of dimension $n$. In other words, unless specified otherwise, we only consider full-dimensional convex lattice polytopes.
    
    For $\lambda = (\lambda_1, \lambda_2, \ldots, \lambda_n) \in \ZZ_+^n$ and $r \in \ZZ_+$, $r\lambda = (r\lambda_1, r\lambda_2, \ldots, r\lambda_n)$ is the $r$-th dilation of $\lambda$. Similarly, for a polytope $P \subset \RR^n$, $rP = \{x \in \RR^n:~~x/r \in P\}$ is the $r$-th dilation of $P$.
\end{notation}

\begin{figure}[ht]
    \centering
    \begin{tikzpicture}[scale=2]
        % Nodes
        \node (A) at (0,1) {$\lambda = r\lambda',~~r \geq n - 1$};
        \node (B) at (2,1) {N};
        \node (C) at (4,1) {A1N};
        \node (D) at (-2,0) {$\LPE(4n(n + 1))$};
        \node (E) at (0,0) {$\LPE(n)$};
        \node (F) at (2,0) {VA};
    
        % Edges
        \draw[-{Implies}, double, very thick] (A) -- (B);
        \draw[-{Implies}, double, very thick] (B) -- (C);
        \draw[-{Implies}, double, very thick] (D) -- (E);
        \draw[-{Implies}, double, very thick] (E) -- (F);
        \draw[-{Implies}, double, very thick] (A) -- (E);
        \draw[-{Implies}, double, very thick] (B) -- (F);
        
        % Curved edges
        \draw[-{Implies}, double, very thick, bend left=60] (D) to (B);
        \draw[-{Implies}, double, very thick, bend right=60] (E) to (C);
    \end{tikzpicture}
    \caption{Implication graph of properties of rectangular simplices.}
    \label{fig:implications}
\end{figure}
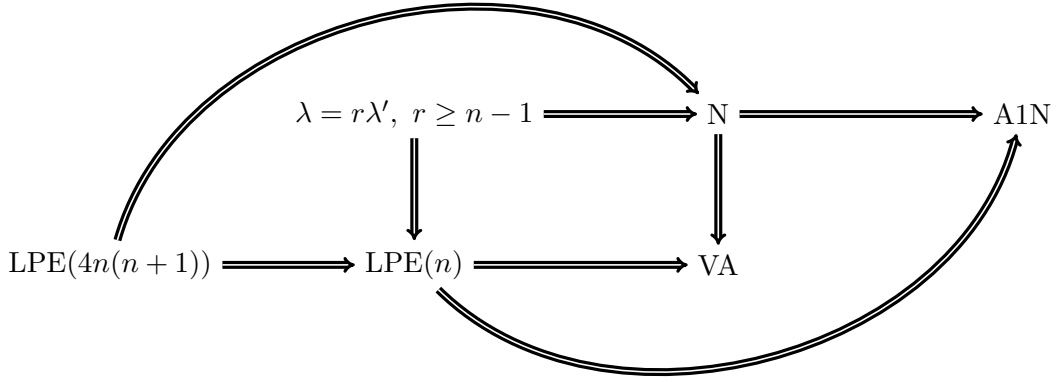

\section*{Acknowledgements}

This work was carried out as part of a “Research in Teams” project at the Erwin Schrödinger International Institute for Mathematics and Physics (ESI) in Vienna, and we gratefully acknowledge the Institute’s support and warm hospitality. 
DB and PA thank the Abram Gannibal Project for supporting a research visit, during which this project was initiated. 
PA would like to thank AFRIMath for their support.
We also thank Balázs Szendrői for his constant encouragement and support, and Winfried Bruns for kindly informing us about the example of a very ample but not normal non-rectangular simplex.

\section{Preliminaries and previous work}

In this section we give basic definitions and provide references to some existing results.

\subsection{Properties of polytopes} \hfill

\begin{definition}\label{definition:normal-very-ample}
    Let $P \subset \RR^n$ be a polytope.
    We say that $P$ is \textbf{very ample} if there exists a $k_0 \in \NN$ such that for every $k \geq k_0$ and every $x \in kP \cap \ZZ^n$ there exists $x_1 , \dots , x_k \in P \cap \ZZ^n$ such that $x = x_1 + \cdots + x_k$.
    If we can choose $k_0 = 1$ then we say that $P$ is \textbf{normal}.
\end{definition}

For each $k \in \NN$, this can be expressed as
\[kP \cap \ZZ^n = P \cap \ZZ^n + \cdots + P \cap \ZZ^n \enspace ,\]
which we sometimes refer to this as the IDP property for $kP$.

There is also a characterisation of very ampleness via the semigroups at each edge.

\begin{definition}
Let $P \subset \RR^n$ be a polytope and $v \in P$ a vertex.
Define the \textbf{vertex semigroup} at $v$ to be $S_{P,v} = \NN(P \cap \ZZ^n - v)$.
\end{definition}

The following is well-known \cite[Exercise 2.23]{BrunsGubeladze:2009}.

\begin{proposition}\label{prop:very-ample-saturated}
A polytope $P \subset \RR^n$ is very ample if and only if for every vertex $v \in P$, the vertex semigroup $S_{P,v}$ is saturated.
\end{proposition}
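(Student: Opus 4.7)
The plan is to prove both implications by identifying the saturation of $S_{P,v}$ in $\ZZ^n$ with the lattice points of the tangent cone $C_v := \Cone(P - v)$. Since $P$ is a lattice polytope, every vertex $u \neq v$ of $P$ gives a vector $u - v \in P \cap \ZZ^n - v$, and these vectors generate $C_v$ as a cone over $\RR_{\geq 0}$. Hence every $y \in C_v \cap \ZZ^n$ admits a $\QQ_{\geq 0}$-representation in these generators, and clearing denominators yields some $Ny \in S_{P,v}$ with $N \geq 1$. Thus the saturation of $S_{P,v}$ inside $\ZZ^n$ equals $C_v \cap \ZZ^n$, and the proposition reduces to proving $S_{P,v} = C_v \cap \ZZ^n$ for every vertex $v$ if and only if $P$ is very ample.

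For the direction $(\Rightarrow)$, let $P$ be very ample with threshold $k_0$, fix a vertex $v$, and suppose $my \in S_{P,v}$ for some $y \in \ZZ^n$ and $m \geq 1$. Since $C_v$ is closed under positive rescaling, $y \in C_v$, so $y = s(p - v)$ for some $s \geq 0$ and $p \in P$. For any integer $K \geq \max(s, k_0)$, the point $(y + Kv)/K = (1 - s/K)v + (s/K)p$ is a convex combination in $P$, so $y + Kv \in KP \cap \ZZ^n$. Very ampleness at level $K$ then decomposes $y + Kv = \sum_{i=1}^K x_i$ with $x_i \in P \cap \ZZ^n$, yielding $y = \sum_{i=1}^K (x_i - v) \in S_{P,v}$.

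For the direction $(\Leftarrow)$, saturation of every $S_{P,v}$ combined with the first paragraph gives $S_{P,v} = C_v \cap \ZZ^n$. Given $x \in kP \cap \ZZ^n$ and any vertex $v$, one has $x - kv \in k(P - v) \subseteq C_v$, so by saturation $x - kv = \sum_{i=1}^m(u_i - v)$ for some $u_i \in P \cap \ZZ^n$. Provided $m \leq k$, rewriting gives $x = u_1 + \cdots + u_m + (k - m)v$, a decomposition of $x$ into $k$ lattice points of $P$ (padding with copies of $v$).

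The main obstacle is controlling $m$: a representation coming from saturation may a priori use too many summands. The standard remedy is a Noetherian argument applied to the cone semigroup $C(P) \cap \ZZ^{n+1}$ over $C(P) := \Cone(P \times \{1\})$. This semigroup is finitely generated by Gordan's lemma, and the local equality $S_{P,v} = C_v \cap \ZZ^n$ at each vertex translates into the statement that the subsemigroup generated by $P \cap \ZZ^n \times \{1\}$ coincides with $C(P) \cap \ZZ^{n+1}$ after inverting $(v, 1)$, for every vertex $v$. Since $P$ has only finitely many vertices, the defect between these two semigroups is supported in bounded height, which is exactly the existence of a threshold $k_0$ beyond which $kP \cap \ZZ^n$ is the $k$-fold Minkowski sum of $P \cap \ZZ^n$.
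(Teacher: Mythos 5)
The paper itself does not prove this proposition; it cites it as \cite[Exercise 2.23]{BrunsGubeladze:2009}, so your argument can only be compared with the standard proof. Your forward direction is complete and correct: reading ``saturated'' as saturation inside $\ZZ^n$ (the intended meaning, and the one under which the statement is actually true), the identification of that saturation with $\Cone(P-v)\cap\ZZ^n$ is right, and the trick of adding $Kv$ with $K\geq\max(s,k_0)$ to land in $KP\cap\ZZ^n$ and then splitting via very ampleness is exactly the standard argument. Your reduction of the converse to controlling the number of summands $m$ is also correct, as is the reformulation of vertex saturation as the equality of $S=\NN\bigl((P\cap\ZZ^n)\times\{1\}\bigr)$ with $\bar S=\Cone(P\times\{1\})\cap\ZZ^{n+1}$ after inverting $(v,1)$.

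The genuine gap is in your final paragraph, which is where the converse actually lives: ``the defect between these two semigroups is supported in bounded height'' is precisely the assertion to be proved, and ``since $P$ has only finitely many vertices'' does not yield it. The local statement gives, for each $z\in\bar S$ and each vertex $v$, some $N(z,v)$ with $z+N(z,v)(v,1)\in S$; without uniformity in $z$ this says nothing about heights, and Gordan's lemma (finite generation of $\bar S$ as a semigroup) is not by itself enough, since the generators of $\bar S$ need not lie in $S$. What is missing is a module-finiteness argument plus the bookkeeping that converts it into a height bound. For instance: write every $z\in\bar S$ as $z=b+\sum_v c_v(v,1)$ with $c_v\in\NN$ and $b$ in the finite set $B$ of lattice points of the box $\bigl\{\sum_v t_v(v,1): 0\le t_v<1\bigr\}$; apply the local saturation to the finitely many pairs $(b,v)\in B\times V$ to get one uniform $N$; then if $\operatorname{ht}(z)>\max_{b\in B}\operatorname{ht}(b)+N\#V$, some $c_{v^*}\geq N$, so $z=(b+N(v^*,1))+(c_{v^*}-N)(v^*,1)+\sum_{v\neq v^*}c_v(v,1)\in S$, and an element of $S$ of height $k$ is a sum of exactly $k$ points of $(P\cap\ZZ^n)\times\{1\}$, which is the desired decomposition. (In the algebraic version you gesture at, one needs $k[\bar S]$ finite over $k[S]$ and, crucially, that the monomials $x^{(v,1)}$ generate an ideal whose radical contains the whole irrelevant ideal---because every lattice point of $P$, not only the vertices, has a power divisible by a vertex monomial---so that the finitely generated graded module $k[\bar S]/k[S]$, supported on the vertex locus, vanishes in all large degrees.) As written, this decisive step is asserted rather than proved.
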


We define indices associated to the properties from Definition \ref{definition:normal-very-ample}.

\begin{definition}
    Let $P \subset \RR^n$ be a polytope.
    We define
    \[\muva(P) = \min \{k \in \NN \,\, | \,\, kP \text{ is very ample}\} \enspace ,\]
    \[\munorm(P) = \min \{k_0 \in \NN\,\, | \,\, kP \text{  normal } \forall \, k\geq k_0\} \enspace .\]
\end{definition}
These are well-defined, one way to see this is via the correspondence to toric varieties, see Proposition \ref{proposition:toric-polytope-correspondence}.

\begin{remark}
    Note that just because $kP$ is normal does not necessarily mean that $(k+1)P$ is normal.
    Another invariant we may define is the minimal $k_0$ such that $k_0P$ is normal. This is sometimes called the \emph{minimal normality index} $\mu_{\mnorm}$ and can be strictly smaller than our normality index.
    For an example of this phenomena and a study of many other indices and how they relate to one another see \cite{CoxHasseHibiHigashitani:2014}.
\end{remark}

The following result is fundamental for our study and shows that the normality (and therefore very ampleness) index is bounded uniformly above for all polytopes of the same dimension.
It was first stated in the following form in \cite{BrunsGubeladzeTrung:1997}, however the proof goes back to Ewald-Wessels \cite{EwaldWessels:1991} who formulate it in terms of toric varieties.

\begin{theorem}\label{theorem:EW}
    Let $P \subset \RR^n$ be a polytope. Then for every $k\geq n-1$ the dilation $kP$ is normal.
\end{theorem}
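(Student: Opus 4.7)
My plan is to reduce to the case of a lattice simplex via triangulation and then argue with barycentric coordinates. Choose any triangulation of $P$ into lattice simplices whose vertex set consists of the lattice points of $P$ (for instance a pulling triangulation); any lattice point $x \in mkP$ lies in some scaled simplex $mk\sigma$, and lattice points of $k\sigma$ are in particular lattice points of $kP$. Since each $\sigma$ has dimension at most $n$ and the hypothesis $k \geq n-1$ gives $k \geq \dim\sigma - 1$, it suffices to assume $P = \sigma$ is a lattice $n$-simplex with vertices $v_0, \dots, v_n$.

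For $x \in mk\sigma \cap \ZZ^n$, expand $x = \sum_{i=0}^n \beta_i v_i$ in barycentric coordinates with $\beta_i \geq 0$ and $\sum \beta_i = mk$. Separating into integer and fractional parts, let $c_i = \lfloor \beta_i \rfloor$, $f_i = \beta_i - c_i$ and $s = \sum_i f_i$; since the $\beta_i$ sum to an integer, so do the $f_i$, and $s \in \{0, 1, \dots, n\}$. The point $z := \sum f_i v_i = x - \sum c_i v_i$ is then a lattice point lying in $s\sigma$.

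If $s \leq k$, then $z$ already lies in $k\sigma \cap \ZZ^n$, and a greedy distribution splits the integers $c_i$ (with $\sum c_i = mk - s$) into $m-1$ groups summing to $k$ and a final group summing to $k - s$. Combining the last group with $z$ yields $m$ lattice points of $k\sigma$ summing to $x$. This argument already settles the theorem whenever $k \geq n$, since $s \leq n \leq k$.

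The main obstacle is the boundary case $s = n$ with $k = n-1$, which is uncovered by the easy analysis: here all $f_i$ are positive and $z \in n\sigma \setminus (n-1)\sigma$, so $z$ cannot be used as one of the $y_j$ directly. The resolution is to perform a redistribution that exchanges one unit of integer mass between the problematic bucket and a different integer bucket, producing a splitting of $z$ plus a single vertex into two lattice points of $(n-1)\sigma$, while compensating by shortening another integer bucket by a matching amount. Verifying that this trade can always be arranged --- preserving lattice integrality and non-negativity of all barycentric coordinates throughout --- is the technical heart of the Ewald-Wessels argument and the step I expect to be the main obstacle. A trivial edge case $\sum c_i = 0$ forces $m(n-1) = n$, which only occurs for very small $m$ and $n$ and is handled by hand.
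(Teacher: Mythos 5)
Your reduction to a lattice $n$-simplex via a pulling triangulation, and the integer/fractional-part bookkeeping $\beta_i = c_i + f_i$, are fine; they do prove the statement whenever $k \geq n$, and also for $k = n-1$ whenever $s \leq n-1$. (One cosmetic slip: for $s \leq k$ the point $z$ does \emph{not} in general lie in $k\sigma$, since $s\sigma \not\subseteq k\sigma$ when the dilation is taken about the origin; what you actually use --- appending $k-s$ vertex copies from the integer buckets to $z$ --- is correct, so this is harmless.) Note that the paper does not reprove this theorem but cites Ewald--Wessels and Bruns--Gubeladze--Trung, so the comparison is with those arguments.

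The genuine gap is that the case $k = n-1$, $s = n$ is the entire content of the theorem, and you do not prove it: you posit a ``redistribution'' and yourself flag its verification as the expected main obstacle. This case is not a removable technicality --- $k=n-1$ is exactly the sharp bound the paper is concerned with (maximally non-normal simplices have $\munorm = n-1$, so dilation by $n-2$ can genuinely fail), and your easy analysis only covers $k \geq n$. Moreover, the specific trade you describe is not even dimensionally consistent: $z + v_j$ has barycentric coordinate sum $n+1$, while two lattice points of $(n-1)\sigma$ have total coordinate sum $2(n-1)$, so for $n \neq 3$ the proposed split cannot be taken literally; and once the bookkeeping is repaired (splitting $z$ together with a suitable number of vertices drawn from the integer buckets, of which only $(m-1)(n-1)-1$ are available), the existence of a splitting into lattice points with non-negative barycentric coordinates is precisely the nontrivial assertion --- it does not follow from any greedy or exchange argument you have set up. The structural fact that the cited proofs exploit in this exceptional case is that all $f_i \in (0,1)$, so that $w := v_0 + \cdots + v_n - z$ is an \emph{interior} lattice point of $\sigma$, giving an extra lattice point of $\sigma$ beyond the vertices with which to build the missing summand; nothing in your sketch identifies or uses this. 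As it stands, the proposal proves a strictly weaker statement ($k \geq n$) and leaves the theorem's actual content open.
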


\subsection{Toric varieties} \hfill

We recall a few basic notions of toric geometry mostly to fix notation, we use standards as laid out in \cite{CoxLittleSchenck:2011}.

Given a polytope $P \subset \RR^n$ we denote the corresponding pair $(X,L)$, where $X$ is a toric variety and $L$ is an ample reflexive rank one sheaf.
This association works by considering the semigroup $S(P)=\Cone(P \times\{1\}) \cap \ZZ^{n+1}$ and then defining $X = \Proj k[P]$, where $k[P]$ is the $k$-algebra associated to $S[P]$.

There is an association going the other direction (although, as stated one must fix an isomorphism $T \cong (\CC^*)^n$) which assigns a lattice polytope to such an ample pair.
Up to sensible notions of equivalence, this correspondence can be made one-to-one.

Crucially, under this correspondence, when $L$ is a line bundle, then a dilation $kD$ corresponds to $L^{\otimes k}$.

The following two results are well-known, see \cite{CoxLittleSchenck:2011}.

\begin{proposition}\label{proposition:toric-polytope-correspondence}
    Given $P$ and $(X,L)$ as above, then the following hold.
    \begin{enumerate}
        \item $P$ is very ample if and only if $L$ is very ample.
        \item $P$ is normal if and only if $L$ is very ample and the embedding into $\PP(H^0(X,L)^*)$ is projectively normal.
    \end{enumerate}
\end{proposition}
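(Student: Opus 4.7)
The plan is to translate both statements into purely combinatorial conditions on the semigroup algebra $k[S(P)]$ using the description $X = \Proj k[S(P)]$ with its natural grading, the identification $H^0(X, L^{\otimes r}) \cong \bigoplus_{m \in rP \cap \ZZ^n} k \cdot \chi^{(m,r)}$, and the affine cover of $X$ by the charts $U_v = \mathrm{Spec}\, k[\sigma_v^\vee \cap \ZZ^n]$ indexed by the vertices $v$ of $P$, where $\sigma_v^\vee$ is the (automatically saturated) cone at $v$. Under this correspondence the multiplication map $H^0(X, L) \otimes H^0(X, L^{\otimes r}) \to H^0(X, L^{\otimes r+1})$ becomes Minkowski addition of lattice points of $P$ and $rP$, so both properties of $L$ reduce to combinatorics of $P \cap \ZZ^n$.

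For part (1), I would invoke Proposition \ref{prop:very-ample-saturated} as the combinatorial side of the equivalence. On the geometric side, $L$ is very ample iff the rational map $\phi_L \colon X \dashrightarrow \PP(H^0(X,L)^*)$ is defined everywhere and is a closed immersion. For each vertex $v$, the section $\chi^{(v,1)}$ is nonvanishing on $U_v$, so I dehomogenise by it: the restriction $\phi_L|_{U_v}$ is the morphism corresponding to the ring map $k[x_m : m \in P \cap \ZZ^n,\ m \neq v] \to k[\sigma_v^\vee \cap \ZZ^n]$, $x_m \mapsto \chi^{m-v}$. This is a closed immersion iff its image subring equals $k[\sigma_v^\vee \cap \ZZ^n]$, equivalently iff $\NN(P \cap \ZZ^n - v) = \sigma_v^\vee \cap \ZZ^n$, which is exactly saturation of the vertex semigroup $S_{P,v}$. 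Running this argument at every vertex and combining with Proposition \ref{prop:very-ample-saturated} yields (1).

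For part (2), projective normality of the embedding $\phi_L$ means, by definition, surjectivity of the multiplication map $\mathrm{Sym}^r H^0(X, L) \to H^0(X, L^{\otimes r})$ for every $r \geq 1$, together with $L$ being very ample so the embedding is actually defined. Via the identification in the first paragraph, this surjectivity is literally the statement $(P \cap \ZZ^n) + \cdots + (P \cap \ZZ^n) = rP \cap \ZZ^n$ ($r$ summands) for all $r \geq 1$, which is the normality of $P$. Conversely, if $P$ is normal then each vertex semigroup $S_{P,v}$ is automatically saturated: any $w \in \sigma_v^\vee \cap \ZZ^n$ satisfies $Nv + w \in NP \cap \ZZ^n$ for some $N \in \ZZ_+$, and normality expresses this lattice point as a sum of elements of $P \cap \ZZ^n$, which witnesses $w \in S_{P,v}$. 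Hence $L$ is very ample by part (1), closing the equivalence.

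The main obstacle is not conceptual but bookkeeping: one must track the $\ZZ$-grading on $k[S(P)]$ consistently against polytopal dilations $rP$, verify that $\phi_L|_{U_v}$ genuinely admits the explicit description above, and check that the $U_v$ actually cover $X$ (which uses that $P$ is full-dimensional and lattice). Both parts are classical, essentially contained in \cite{CoxLittleSchenck:2011}; the proof proposal is to record these translations in enough detail that Proposition \ref{prop:very-ample-saturated} can be applied vertex by vertex.
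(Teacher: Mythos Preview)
Your proposal is correct and follows the standard argument found in \cite{CoxLittleSchenck:2011}. The paper itself does not give a proof of this proposition: it simply states that the result is well-known and refers to \cite{CoxLittleSchenck:2011}. So your sketch is strictly more detailed than what appears in the paper, but it is exactly the approach one would extract from that reference.
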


\begin{theorem}
    Suppose that $P$ and $(X,L)$ are as above and suppose further that $X$ is smooth.
    Then $P$ is very ample.
\end{theorem}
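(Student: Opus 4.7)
The plan is to apply the saturation criterion of Proposition \ref{prop:very-ample-saturated}, which reduces very ampleness of $P$ to checking, at each vertex $v \in P$, that the vertex semigroup $S_{P,v} = \NN(P \cap \ZZ^n - v)$ is saturated, i.e.\ coincides with $(\RR_{\geq 0} S_{P,v}) \cap \ZZ^n$. So the whole task is local at each vertex of $P$.

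Next I would translate the smoothness hypothesis on $X$ into a combinatorial condition at a vertex $v$ of $P$. Under the correspondence $(X,L) \leftrightarrow P$, vertices of $P$ correspond to the torus-fixed points of $X$, and the cone of the normal fan at $v$ (equivalently, the maximal cone $\sigma_v$ of the fan of $X$ at that fixed point) is dual to the tangent cone $C_v = \RR_{\geq 0}(P - v)$. Smoothness of $X$ at the fixed point means $\sigma_v$ is generated by a $\ZZ$-basis of the dual lattice, which dually says that $C_v$ is generated as a cone by a $\ZZ$-basis $u_1, \ldots, u_n$ of $\ZZ^n$; concretely, the $u_i$ are the primitive lattice vectors along the $n$ edges of $P$ emanating from $v$.

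Now I would check that these primitive edge vectors lie in $P \cap \ZZ^n - v$. Since $P$ is a lattice polytope, each edge $e_i$ at $v$ has both endpoints in $\ZZ^n$, so its lattice points along $e_i$ form an arithmetic progression $v, v + u_i, v + 2u_i, \ldots$ with $u_i$ the primitive direction; in particular $v + u_i \in P \cap \ZZ^n$, so $u_i \in P \cap \ZZ^n - v$. Consequently $S_{P,v}$ contains $\NN u_1 + \cdots + \NN u_n$, which is already the entire lattice cone $C_v \cap \ZZ^n$ because $u_1, \ldots, u_n$ is a $\ZZ$-basis. Since $S_{P,v} \subset C_v \cap \ZZ^n$ always, equality holds and $S_{P,v}$ is saturated.

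Applying this at every vertex and invoking Proposition \ref{prop:very-ample-saturated} gives the result. The only step that requires any care is the dictionary between smoothness of $X$ and a basis of primitive edge vectors at each vertex of $P$; this is standard from toric geometry (see \cite{CoxLittleSchenck:2011}), but it is the one place where one has to be careful to distinguish the primitive edge vector $u_i$ from the full displacement to the adjacent vertex, which in general is a positive integer multiple of $u_i$. Ampleness of $L$ itself plays no role beyond ensuring the correspondence produces a genuine polytope $P$; the smoothness of the fan alone forces each vertex semigroup to be as large as possible.
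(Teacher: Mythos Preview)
Your argument is correct: reducing to saturation of each vertex semigroup via Proposition~\ref{prop:very-ample-saturated}, then using that smoothness forces the primitive edge vectors at $v$ to be a $\ZZ$-basis of $\ZZ^n$, so that $S_{P,v}=\NN u_1+\cdots+\NN u_n=C_v\cap\ZZ^n$ is automatically saturated. This is exactly the standard proof.

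For comparison, the paper does not supply its own proof of this statement at all; it simply records the theorem as well-known and refers to \cite{CoxLittleSchenck:2011}. So rather than diverging from the paper's approach, you have written out the argument that the paper elides. Your closing remarks are also apt: the only substantive input is the toric dictionary identifying smoothness with unimodularity of the normal (hence tangent) cones, and your caution about distinguishing the primitive edge vector from the full edge displacement is exactly the point one must get right.
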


Thus polytopes coming from smooth toric varieties are always very ample, however the same statement for normality is a famous conjecture of Oda.

\begin{conjecture}
    Suppose that $P$ and $(X,D)$ are as above and that $X$ is smooth.
    Then $P$ is normal.
\end{conjecture}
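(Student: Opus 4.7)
The statement is Oda's celebrated conjecture on smooth lattice polytopes, which remains open in general. Still, let me sketch the line of attack that a proof would presumably take. The plan is to establish the integer decomposition property for $P$ itself, i.e.\ that $kP \cap \ZZ^n = k(P \cap \ZZ^n)$ for every $k \geq 1$. By Theorem \ref{theorem:EW} we already know this holds for $k \geq n-1$, so by a standard telescoping argument it is enough to show that every lattice point of $2P$ is the sum of two lattice points of $P$.

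First I would translate this reduced statement into the algebraic assertion that the multiplication map $H^0(X,L) \otimes H^0(X,L) \to H^0(X, L^{\otimes 2})$ is surjective, since the lattice points of $kP$ form a monomial basis of $H^0(X, L^{\otimes k})$. For smooth $(X,L)$ with $L$ ample, one natural route is via Castelnuovo-Mumford regularity and Koszul-type vanishing on $X$: if one can bound $\mathrm{reg}$ of the ideal of $X$ in $\PP(H^0(X,L)^*)$ sharply enough, surjectivity of multiplication follows. A more combinatorial route is to exhibit a unimodular cover, or better a unimodular triangulation, of $P$; since each simplex in a unimodular cover is automatically IDP, patching the local decompositions yields the global one. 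Smoothness of $X$ means that each vertex cone of $P$ is unimodular, so the boundary of $P$ is locally unimodular at each vertex, providing natural starting data for such a construction.

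The main obstacle is that neither approach is known to go through in arbitrary dimension. Unimodular covers of smooth polytopes have been constructed for $n \leq 3$ and for several restricted classes (certain Nakajima polytopes, various smooth Fano toric varieties, some matroid polytopes), but the patching procedure fails for general smooth $P$ because smoothness only controls the local combinatorics at vertices, not along higher-dimensional faces or the interior; regularity bounds likewise reduce to producing Gröbner bases with square-free initial ideal, which is essentially equivalent to exhibiting such a triangulation. The methods developed earlier in the paper for rectangular simplices do not apply here, since the maximally non-normal families produced in Section \ref{section:maximally-non-normal} rely on repeated or highly composite weights that are incompatible with smoothness of $X$. An honest assessment is that any complete proof of the conjecture will require a substantially new idea beyond what is presently available.
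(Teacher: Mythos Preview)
Your assessment is essentially correct: the paper does not prove this statement at all. It is presented as Oda's conjecture, explicitly flagged as ``a famous conjecture of Oda,'' and left open. There is no proof in the paper to compare against, so your recognition that any purported proof would require genuinely new ideas is the right response.

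One small factual correction to your final paragraph: the maximally non-normal families constructed in Section~\ref{section:maximally-non-normal} do \emph{not} rely on repeated or highly composite weights; on the contrary, they use \emph{distinct prime} entries (see Theorem~\ref{theorem:infinitely-many-maximally-non-normal} and Algorithm~\ref{alg:1}). The reason those examples are irrelevant to Oda's conjecture is not the structure of the weights but simply that the associated weighted projective spaces $\PP(1,a_1,\ldots,a_n)$ are singular whenever any $a_i>1$, so smoothness of $X$ is never in play there.
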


These questions are deeply related to the regularity of an embedding and can be expressed cohomologically.
For background and related results, see \cite{Hering:Thesis,HerinSchenckSmith:2006,Oberwolfach:2007}.

\subsection{Weighted projective spaces and simplices}\label{subsection:weighted-projective-spaces} \hfill

If $P$ is a simplex, then the associated ample toric pair $(X,L)$ has Picard (and class) rank 1 and $X$ is a so-called `fake' weighted projective space and $L \cong \cO_X(d)$, some power of the generator of the class group (note that is $L$ is not necessarily a line bundle we have to be a little careful with what precisely we mean by power).

Consider a weight vector $\underline{a} = (a_0, \dots , a_n) \in \ZZ_{>0}^{n+1}$, then denote the corresponding weighted projective space $\PP = \PP(\underline{a}) = \Proj k[x_0, \dots , x_n]$ with the grading given by our weight vector.
We assume that our presentaion of $\PP$ is well-formed, that is, that for every $i = 0 ,\dots , n$ we have $\gcd(a_0, \dots, \hat{a}_i, \dots , a_n) = 1$.

It is well-known that $\PP$ is a toric variety.
Then $\Pic \PP \cong \ZZ$ is generated by $\cO_{\PP}(l)$ where $l = \lcm (a_0, \dots , a_n)$ and we construct the corresponding polytope to $(\PP, \cO_{\PP}(l))$ as the `section polytope' of $\cO_{\PP}(l)$.
That is, we see the monomials of $H^0(\PP,\cO_{\PP}(l) = k[x_0, \dots , x_n]_l$ as lattice points and take their convex hull.

\begin{definition}\label{definition:weighted-simplex}
    Let $\PP = \PP(\underline{a})$ be as above and write $l_i = \frac{l}{a_i}$ for each $i = 0, \dots , n$.
    We define $\Delta_{\underline{a},\,l} = \Conv(l_0e_0 , \dots , l_ne_n) \subset \RR^{n+1}$.
\end{definition}

Then the pair associated to $\Delta_{\underline{a},l}$ is $(\PP, \cO_{\PP}(l))$.
Thus to answer the question ''for which $k$ is $\cO_{\PP}(kl)$ is very ample?", we study simplices of this form.
Futhermore, when at least one weight is $1$, the simplices are isomorphic to rectangular simplices, as defined in Definition \ref{definition:rectangular-simplex}.

\begin{lemma}\label{lemma:projection-to-rec-simplices}
    Consider $(1,a_1, \dots , a_n) \in \ZZ^{n+1}_{>0}$ and $l$ and the $l_i$ as above.
    The projection away from the first coordinate $\pr_0 : \RR^{n+1} \rightarrow \RR^n$ restricts to a bijection
    \[m\Delta_{\underline{a},l} \cap \ZZ^{n+1}\rightarrow m\Delta(l_1 , \dots , l_n) \cap \ZZ^n\]
    for any $m>0$.
    Moreover, this induces a graded isomorphism of semigroups algebras $k[\Delta_{\underline{a},l}] \xrightarrow[]{} k[\Delta(l_1 , \dots , l_n)]$.
\end{lemma}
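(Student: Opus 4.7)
The plan is to give explicit hyperplane descriptions of both simplices and verify that the ``missing'' first coordinate is uniquely determined as a non-negative integer; this gives a bijection on lattice points which upgrades immediately to a graded semigroup-algebra isomorphism.

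First I would write down the explicit inequality descriptions. Setting $a_0 = 1$, so that $l_0 = l$ and $l_i = l/a_i$ for $i \geq 1$, a direct barycentric-coordinate computation using the vertices $ml_i e_i$ yields
\[
m\Delta_{\underline{a},l} = \left\{ x \in \RR_{\geq 0}^{n+1} \;:\; \sum_{i=0}^n a_i x_i = ml \right\}, \qquad m\Delta(l_1,\dots,l_n) = \left\{ y \in \RR_{\geq 0}^n \;:\; \sum_{i=1}^n a_i y_i \leq ml \right\}.
\]

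Next I would show the projection is well-defined and bijective on lattice points. For $x \in m\Delta_{\underline{a},l} \cap \ZZ^{n+1}$, the condition $x_0 \geq 0$ turns the defining equality into precisely the inequality above for $\pr_0(x) = (x_1,\dots,x_n)$, so $\pr_0(x) \in m\Delta(l_1,\dots,l_n) \cap \ZZ^n$. For the inverse, given such a $y$, I would set $x_0 \coloneqq ml - \sum_{i=1}^n a_i y_i$; this is non-negative by the inequality and an integer because $a_0 = 1$ (this is the \emph{only} place the assumption that some weight equals $1$ is used). The pair $(x_0, y_1,\dots,y_n)$ then satisfies the defining equality, and uniqueness of $x_0$ given $y$ makes the two maps mutually inverse.

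For the graded algebra statement, the same linear projection extends to $\RR^{n+2} \to \RR^{n+1}$ by forgetting $x_0$ and keeping the grading coordinate; this map is additive and grading-preserving, so the bijection of lattice points assembles degree-by-degree into a graded semigroup isomorphism
\[
\Cone(\Delta_{\underline{a},l} \times \{1\}) \cap \ZZ^{n+2} \;\xrightarrow{\;\sim\;}\; \Cone(\Delta(l_1,\dots,l_n) \times \{1\}) \cap \ZZ^{n+1},
\]
which induces the claimed graded $k$-algebra isomorphism.

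The only subtle point is the integrality of the recovered $x_0$, which forces the role of the weight equal to $1$; once this is observed, everything else is a routine unpacking of the definitions, so I do not expect any serious obstacle.
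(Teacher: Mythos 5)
Your proof is correct and is essentially the paper's argument: recovering $x_0 = ml - \sum_{i=1}^n a_i y_i$ as a non-negative integer (possible precisely because $a_0 = 1$) is exactly the "dehomogenisation of monomials" the paper invokes, and the graded isomorphism follows the same way. You have simply written out explicitly, via the hyperplane descriptions, what the paper leaves as a one-line remark.
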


\begin{proof}
    That $\pr_0 : {\Delta_{\underline{a},l}} \cap \ZZ^{n+1} \rightarrow \Delta(l_1, \dots , l_n) \cap \ZZ^n$ is a bijection is just dehomogenisation when viewing lattice points as monomials.
    The graded isomorphism then follows as homogeneous generators are sent to homogeneous generators.
\end{proof}

\begin{remark}
    The normality of related polytopes is studied in \cite{BraunDavisSolus:2018,BraunDavisHanelySolus:2024}.
    They impose reflexivity (equivalently, that there is exactly one lattice point in the relative interior); we do not make this restriction.
    We note that the reflexive simplices analysed in \cite{BraunDavisSolus:2018,BraunDavisHanelySolus:2024} are rational multiples of the \emph{polar duals} of the polytopes $\Delta$ that give our weighted projective spaces $(X,L)$.
    In particular, even though the discussion there is phrased in terms of these weighted projective spaces, the varieties they directly study correspond to $\Delta^\vee$ rather than to $\Delta$ itself.
    In this sense, the two viewpoints are complementary, but dualising does not, in general, preserve normality or the normality index, so translations between the settings is not generally possible.
    Furthermore, their emphasis is only on normality, that is, they ask whether $\munorm(\Delta)=1$, and do not track indices.
\end{remark}

Note that, a weighted projective space associated to a rectangular simplex is never fake.
Indeed, the lattice generated by the lattice points of a rectangular simplex is the full $\ZZ^n$.

\begin{remark}
    We make two remarks concerning the toric varieties associated to rectangular simplices.
    \begin{enumerate}
        \item Note that, a weighted projective space associated to a rectangular simplex is never fake.
        Indeed, the lattice generated by the lattice points of a rectangular simplex is the full $\ZZ^n$ and not a strict sublattice which characterises fakeness, see for example \cite{Kasprzyk:2009}.
        
        \item The well-formed condition on $\PP(1,a_1, \dots , a_n)$ is equivalent to $\gcd(a_1, \dots , a_n)=1$.
        However, via the projection in Lemma \ref{lemma:projection-to-rec-simplices}, one can show that the simplices $\Delta_{(1,a_1,\dots , a_n),l}$ and $\Delta_{(1,ka_1, \dots ,ka_n),kl}$, and their semigroups are both equivalent to the same rectangular simplex and its semigroup.
    \end{enumerate}
    
\end{remark}

\begin{example}[\emph{The} counterexample]\label{example:Ogata-counterexample}
Consider $\PP = \PP(1,6,10,15)$.
Then $\cO(30)$ generates the Picard group and the associated rectangular simplex is $\Delta = \Delta(2,3,5)$.
However, as first remarked by Ogata \cite{Ogata:2005}, $\Delta$ is not normal, nor is it very ample.
\end{example}

In fact, \cite{Ogata:2005} proves that for 3-dimensional simplices, very ample is equivalent to normal.
One may ask, is this always the case?
However, as shown in \cite[Example 2.3]{CoxHasseHibiHigashitani:2014}, there exists a 7-dimensional non-rectangular simplex which is very ample, but non-normal.
This example corresponds to a strictly fake weighted projective space; one can readily compute the class group to find non-zero torsion.
The question remains open for simplices giving weighted projective spaces and in particular for rectangular simplices.

\subsection{Periodicity and almost 1-normality} \hfill

In \cite{BrunsGubeladze:1999}, Bruns and Gubeladze study normality and koszulness of semigroup algebras corresponding to rectangular simplices. This will be one of our main references and we will extend many of their results. Remarkably, they prove that normality of rectangular simplices is \emph{periodic with period the least common multiple of the entries}:

\begin{theorem}\label{theorem:periodicity}\cite[Theorem 1.6]{BrunsGubeladze:1999}
    Let $\lambda = (\lambda_1, \ldots, \lambda_n) \in \ZZ_+^n$, $i = 1, \ldots, n$ and $\ell_i = \lcm(\lambda_1, \ldots, \lambda_{i - 1}, \lambda_{i + 1}, \ldots, \lambda_n)$. Then, $\Delta(\lambda)$ is normal if and only if $\Delta(\lambda_1, \ldots, \lambda_i + \ell_i, \ldots, \lambda_n)$ is normal.
\end{theorem}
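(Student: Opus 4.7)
By the symmetry of the rectangular simplex under coordinate permutations, it suffices to handle the case $i = n$. Write $\ell := \ell_n = \lcm(\lambda_1, \ldots, \lambda_{n-1})$ and $\mu := (\lambda_1, \ldots, \lambda_{n-1}, \lambda_n + \ell)$, so the goal is to show that $\Delta(\lambda)$ is normal iff $\Delta(\mu)$ is normal. I will use the standard inductive reformulation of normality: $\Delta$ is normal iff for every $k \geq 2$ and every $y \in k\Delta \cap \ZZ^n$ one can write $y = v + z$ with $v \in \Delta \cap \ZZ^n$ and $z \in (k-1)\Delta \cap \ZZ^n$. The plan is to exploit the divisibility relations $\lambda_i \mid \ell$ (for $i < n$) to transport such decompositions between $\Delta(\lambda)$ and $\Delta(\mu)$.

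The structural observation underlying the argument is a column-by-column comparison. For a prefix $\underline{x} = (x_1, \ldots, x_{n-1}) \in \ZZ^{n-1}_{\geq 0}$ with $\sigma := \sum_{i<n} x_i/\lambda_i \leq k$, the admissible last coordinates of lattice points of $k\Delta(\lambda)$ over $\underline{x}$ form the integer interval $\{0, \ldots, \lfloor \lambda_n(k-\sigma)\rfloor\}$, and for $k\Delta(\mu)$ the interval is $\{0, \ldots, \lfloor (\lambda_n + \ell)(k-\sigma)\rfloor\}$. Since $\ell/\lambda_i \in \ZZ$ for each $i < n$, the quantity $\ell(k-\sigma)$ is a nonnegative integer and we obtain the exact identity $\lfloor (\lambda_n + \ell)(k-\sigma)\rfloor = \lfloor \lambda_n(k-\sigma)\rfloor + \ell(k-\sigma)$. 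This furnishes a canonical coordinate-preserving injection $\iota_k : k\Delta(\lambda) \cap \ZZ^n \hookrightarrow k\Delta(\mu) \cap \ZZ^n$ that is additive in $k$, carries $\Delta(\lambda) \cap \ZZ^n$ into $\Delta(\mu) \cap \ZZ^n$, and whose image omits exactly the $\ell(k-\sigma)$ \emph{top} lattice points in each prefix column.

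For the forward direction, assume $\Delta(\lambda)$ is normal. Given $y \in k\Delta(\mu) \cap \ZZ^n$, I would peel off copies of the vertex $(\lambda_n + \ell) e_n$ of $\Delta(\mu)$ until the residual lies in the image of $\iota$: a direct computation shows that $y - (\lambda_n + \ell) e_n \in (k-1)\Delta(\mu) \cap \ZZ^n$ whenever $y_n \geq \lambda_n + \ell$, and a small boundary case using $e_n$ or $\lambda_n e_n$ handles intermediate values. Each peeling step lowers the dilation by one; once inside the image of $\iota$, normality of $\Delta(\lambda)$ supplies a decomposition that lifts through $\iota$ and recombines with the peeled vertices into a valid decomposition of $y$ in $\Delta(\mu) \cap \ZZ^n$.

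The reverse direction is the main obstacle. Assuming $\Delta(\mu)$ normal and taking $y \in k\Delta(\lambda) \cap \ZZ^n$, the image $\iota_k(y)$ admits a decomposition $\iota_k(y) = v_1 + \cdots + v_k$ in $\Delta(\mu) \cap \ZZ^n$, but some summands $v_j$ may have last coordinate in $\{\lambda_n + 1, \ldots, \lambda_n + \ell\}$ and thus lie in $\Delta(\mu) \setminus \Delta(\lambda)$. The task is to rewrite the sum to eliminate these \emph{illegal} summands. The mechanism I propose is a local swap: trade $\ell$ units of the $e_n$-coordinate on an illegal summand $v_j$ for $\ell/\lambda_i$ units of some $e_i$-coordinate ($i<n$) on a partner summand $v_{j'}$ with slack in its prefix, the integrality $\ell/\lambda_i \in \ZZ$ guaranteeing that both summands remain lattice points. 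Existence of a suitable partner should follow from a global counting argument: since $y$ itself lies in $k\Delta(\lambda)$, the aggregate over-use of the $e_n$-coordinate by illegal summands must be compensated by under-use of some prefix coordinate elsewhere in the sum. I expect the formal proof to proceed by induction on the number of illegal summands, with the delicate step being to ensure that the swap introduces no new ones.
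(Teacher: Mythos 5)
First, a point of comparison: the paper does not prove this statement at all --- it is quoted directly from \cite[Theorem 1.6]{BrunsGubeladze:1999}, so the relevant benchmark is Bruns--Gubeladze's own argument, which works at the level of the semigroup and is substantially more involved than your sketch. Judged on its own terms, your proposal has genuine gaps in both directions, and you acknowledge as much in the harder one. In the forward direction, peeling points supported on the $e_n$-axis cannot reach the image of $\iota$ in general: if the residual $y'$ at dilation $k'$ has prefix sum $\sigma' = \sum_{i<n} y'_i/\lambda_i > k'-1$ (which is perfectly possible while $y'_n < \lambda_n+\ell$, e.g.\ $y'=(1,2,9)\in 2\Delta(2,3,11)$ with $\sigma'=7/6$), then subtracting \emph{any} multiple of $e_n$ --- be it $e_n$, $\lambda_n e_n$ or the vertex --- leaves a point outside $(k'-1)\Delta(\mu)$, and the residual is never in a dilation of $\Delta(\lambda)$ either. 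One must peel summands with nonzero prefix, chosen so that the remainder stays a lattice point of the correct dilation; that is exactly the nontrivial content, not a ``small boundary case''.

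In the reverse direction the argument is explicitly conjectural (``should follow'', ``I expect''), and the proposed mechanism is defective in two concrete ways. A summand $v_j\in\Delta(\mu)\cap\ZZ^n$ can fail to lie in $\Delta(\lambda)$ even when $v_{j,n}\leq\lambda_n$ (for $\lambda=(2,3)$, $\mu=(2,5)$ the point $(1,2)$ lies in $\Delta(\mu)\setminus\Delta(\lambda)$), so illegality is not detected by the last coordinate and the trade of $\ell$ units of $e_n$ need not even be available on an illegal summand. Moreover, the swap as written --- remove $\ell e_n$ from $v_j$ and add $(\ell/\lambda_i)e_i$ to a partner $v_{j'}$ --- changes the total, so it cannot rewrite a decomposition of a fixed $y$; the sum-preserving variant exchanges $\ell e_n$ against prefix units between the two summands, and then one must show the partner does not become illegal, that both points satisfy the $\Delta(\lambda)$-inequality (the contributions $\ell/\lambda_n$ and $\ell/\lambda_i^2$ do not match in any obvious way), and that the induction on the number of illegal summands terminates --- precisely the steps you defer. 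As it stands the proposal is a plausible plan rather than a proof, and it does not recover the argument of \cite{BrunsGubeladze:1999}.
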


\begin{remark}
    Since the normality of $\Delta(\lambda)$ is (for low values of $n$) easily decidable with software such as \emph{Polymake}, given fixed $\lambda_1, \ldots, \lambda_{n - 1} \in \ZZ_+$, if we check the normality of exactly $\ell_n$ rectangular simplices, we may then decide whether \emph{any} $\Delta(\lambda)$ is normal by computing $\lambda_n \mod \ell_n$. This idea can be extended in many ways, and we will use it for example in Section \ref{section:maximally-non-normal}.
\end{remark}

Bruns and Gubeladze also give several other criteria for normality and koszulness of rectangular simplices in \cite[Proposition 2.1, Proposition 2.4]{BrunsGubeladze:1999}. They also introduce two strictly weaker forms of normality which they call \emph{1-normality} (1N) and \emph{almost 1-normality} (A1N). Since the latter is much more straightforward to check, it will play a central role in our paper, especially in Section \ref{section:hypergraphs}.

\begin{definition}\label{definition:a1n}
    Let $\lambda = (\lambda_1, \ldots, \lambda_n) \in \ZZ_+^n$, $L = \lcm(\lambda_1, \ldots, \lambda_n)$. For $i = 1, \dots, n$, set $L_i = L/\lambda_i$. Let $d = \gcd(L_1, \ldots, L_n)$. We say $\Delta(\lambda)$ is \textbf{almost 1-normal} (A1N) if $L - d$ is in the semigroup generated by the $L_i$.
\end{definition}

\section{General results}\label{section-general-results}

\subsection{Repeated weights} \hfill

We prove that repeated weights affect neither normality nor very ampleness.
This first appeared in \cite[Lemma 2.15]{ColomenarejoGaluppiMichalek:2020}, where the authors prove the result for normality.
We repeat their proof here to show without doubt that the result also holds for very ampleness and for indices.

\begin{proposition}\label{proposition:repeated-weights}
    Consider the weighted projective spaces $\widetilde{\PP} = \PP(a_0, \dots , a_n)$ and $\PP = \PP(a_0, \dots , a_n , a_n)$, where they only differ by dropping the last repeated weight $a_n$.

    Then 
    \[\muva(\PP) = \muva(\widetilde{\PP}) \quad\text{ and }\quad \munorm(\widetilde{\PP}) = \munorm(\PP) \enspace.\]
\end{proposition}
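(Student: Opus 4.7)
The plan is to transfer the statement to polytopes and produce a pair of maps between the two simplices that propagates the IDP property in both directions at every dilation. Writing $\widetilde{\underline{a}}=(a_0,\dots,a_n)$ and $\underline{a}=(a_0,\dots,a_n,a_n)$, we have $\lcm(\underline{a})=\lcm(\widetilde{\underline{a}})=l$, so in both cases the ample generator of $\Pic$ is $\cO(l)$ and the section polytope of $\cO(kl)$ is the dilation $kP\subset\RR^{n+1}$ on $\widetilde{\PP}$ and $kP'\subset\RR^{n+2}$ on $\PP$, where $P=\Delta_{\widetilde{\underline{a}},l}$ and $P'=\Delta_{\underline{a},l}$. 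It therefore suffices to show that $kP$ has the IDP property if and only if $kP'$ does, for every $k\in\NN$. Granted this, equality of $\munorm$ is immediate from the definition, and equality of $\muva$ follows because $kP$ is very ample if and only if $jkP$ is IDP for every sufficiently large $j$, and likewise for $P'$.

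I would then introduce a face inclusion and a projection relating the two polytopes. The inclusion $\iota\colon\RR^{n+1}\hookrightarrow\RR^{n+2}$ sending $(y_0,\dots,y_n)$ to $(y_0,\dots,y_n,0)$ identifies $P$ with the face $\{x_{n+1}=0\}$ of $P'$. The projection $\pi\colon\RR^{n+2}\to\RR^{n+1}$ sending $(x_0,\dots,x_{n+1})$ to $(x_0,\dots,x_{n-1},x_n+x_{n+1})$ surjects $P'$ onto $P$; this is clear from the equality descriptions $P=\{x\in\RR^{n+1}_{\geq 0}:\sum_{i}a_{i}x_{i}=l\}$ and $P'=\{x\in\RR^{n+2}_{\geq 0}:\sum_{i<n}a_{i}x_{i}+a_{n}(x_{n}+x_{n+1})=l\}$. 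Both maps commute with dilations and send lattice points to lattice points.

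The direction from IDP of $kP'$ to IDP of $kP$ is essentially automatic. Given $y\in kP\cap\ZZ^{n+1}$, the lift $\iota(y)$ lies in $kP'\cap\ZZ^{n+2}$, so by IDP it decomposes as $\iota(y)=z_1+\cdots+z_k$ with each $z_j\in P'\cap\ZZ^{n+2}$; since $\iota(y)_{n+1}=0$ and each $z_{j,n+1}\geq 0$, all the $z_j$ lie in $\iota(P)$, giving a decomposition of $y$ in $P\cap\ZZ^{n+1}$. For the converse, take $z\in kP'\cap\ZZ^{n+2}$, set $y=\pi(z)\in kP\cap\ZZ^{n+1}$, and write $y=y_1+\cdots+y_k$ with $y_j\in P\cap\ZZ^{n+1}$. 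The lifting step is then to choose integers $c_j\in[0,y_{j,n}]$ with $\sum_j c_j=z_n$ and to set $z_j=(y_{j,0},\dots,y_{j,n-1},c_j,y_{j,n}-c_j)$; such a choice exists by a greedy argument because $0\leq z_n\leq z_n+z_{n+1}=y_n=\sum_j y_{j,n}$, and one then verifies coordinate by coordinate that each $z_j$ lies in $P'\cap\ZZ^{n+2}$ and that $z_1+\cdots+z_k=z$.

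The main obstacle is precisely this lifting step in the second direction: the projection $\pi$ is not injective on lattice points, and the only substantive content of the argument is that the coordinate $z_n$ can always be distributed among the $k$ summands under the componentwise bounds $c_j\leq y_{j,n}$. Once this combinatorial splitting is made, the membership of each $z_j$ in $P'$ and the correctness of the sum are formal consequences of the halfspace description of $P'$, and passing from the IDP equivalence to equality of both $\munorm$ and $\muva$ is a routine unpacking of their definitions.
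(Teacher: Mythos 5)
Your argument is correct and is essentially the paper's own proof: the paper likewise passes to the section simplices, gets one direction from the face inclusion, and for the converse uses the projection collapsing the last two coordinates together with exactly your step of distributing the collapsed coordinate among the $k$ summands, via the explicit fibres $\{(x_0,\dots,x_{n-1},x_n-i,i)\}$. The only point to read carefully is your reduction sentence: since normality and very ampleness of $kP$ concern decompositions with respect to $kP$ rather than $P$, what is needed (and what your maps, commuting with dilations, do in fact give verbatim) is the IDP equivalence for every dilated pair $(kP,kP')$ and every number of summands $m$.
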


\begin{proof}
    We translate this into a problem of the corresponding simplices.
    Consider an $n$-tuple of positive integers $b_0, \dots , b_n$ and we denote $\Delta = \Conv(b_0e_{0}, \dots , b_ne_n, b_ne_{n+1}) \subset \RR^{n+2}$ and $\widetilde{\Delta} = \Conv(e_0b_0, \dots , e_nb_n) \subset \RR^{n+1}$.
    Note that $\widetilde{\Delta}$ can be seen as a face of $\Delta$.
    We show that $\Delta$ is very ample if and only if $\widetilde{\Delta}$ is very ample and remark that the proof for normality is the exact same.
    
    We define a map $\widetilde{ \cdot} : \RR^{n+2} \rightarrow \RR^{n+1}$ which sends $(x_0, \dots , x_n,x_{n+1}) \mapsto (x_0 + x_1, \dots , x_n+x_{n+1})$.
    This map restricts to a map $\widetilde{\cdot} : \Delta \rightarrow \widetilde{\Delta}$, mapping lattice points to lattice points.
    Moreover, it surjects onto the lattice points of $\widetilde{\Delta}$ and every fibre over such a lattice point contains at least one lattice point of $\Delta$.
    
    In fact, we can describe the lattice points in the fibres explicitly.
    That is, given $(x_0, \dots , x_n) \in \widetilde{\Delta}\cap \ZZ^{n+1}$, the fibre consists of points
    \[\left\{(x_0, \dots ,x_{n-1},x_{n}-i,i)\,\, | \,\, i=0, \dots , x_n \right\}.\]

    One direction is immediate: $\Delta$ being very ample implies $\widetilde{\Delta}$ is very ample, since it is a face of $\Delta$.
    For the converse, suppose that $\widetilde{\Delta}$ is very ample such that for every $k \geq k_0$ we have the IDP condition.
    Fix such a $k \geq k_0$ and take a lattice point $x \in \Delta \cap \ZZ^{n+1}$.
    Then $\widetilde{x} = \widetilde{x}_1 + \cdots + \widetilde{x}_k$ for $x_i \in \widetilde{\Delta} \cap \ZZ^n$.
    Then we can choose the $x_i$ over each $\widetilde{x}_i$ such that
    $x = x_1 + \cdots + x_k$.
    Thus $\Delta$ is very ample.

    It follows that the normal and very ample indices are the same.
\end{proof}

We get immediately the following two results:
\begin{corollary}\label{corollary:max-muva-distinct-weights}
    Let $\PP = \PP(a_0, \dots , a_n)$ such that $\muva(\PP) = n-1$, then $a_i = a_j$ if and only if $i = j$.
    The same holds for $\munorm$.
\end{corollary}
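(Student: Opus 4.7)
The plan is to prove the contrapositive: if the presentation $(a_0,\ldots,a_n)$ of $\PP$ has a repeated entry, then both $\muva(\PP)$ and $\munorm(\PP)$ are strictly less than $n-1$, which precludes equality with $n-1$. The other direction of the iff ($i=j \Rightarrow a_i = a_j$) is tautological.

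First I would reorder the weights so that the repeated pair occupies the last two slots, say $a_{n-1} = a_n$. Proposition \ref{proposition:repeated-weights} then applies directly (it is stated for dropping the last repeated weight, which is exactly our setup) and yields
\[\muva(\PP) = \muva(\widetilde{\PP}), \qquad \munorm(\PP) = \munorm(\widetilde{\PP}),\]
where $\widetilde{\PP} = \PP(a_0,\ldots,a_{n-1})$ is the weighted projective space with one of the repeated weights dropped.

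Next I would exploit the crucial dimensional drop: $\widetilde{\PP}$ is a weighted projective space of dimension $n-1$, so its associated section polytope is $(n-1)$-dimensional. The Ewald--Wessels bound (Theorem \ref{theorem:EW}) thus gives $\munorm(\widetilde{\PP}) \leq (n-1)-1 = n-2$, and therefore $\muva(\widetilde{\PP}) \leq \munorm(\widetilde{\PP}) \leq n-2$. Substituting back through the previous step contradicts the hypothesis that either index of $\PP$ equals $n-1$.

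There is no real obstacle here: the corollary is essentially a two-line consequence of Proposition \ref{proposition:repeated-weights} chained with Theorem \ref{theorem:EW}. The only subtlety worth highlighting is that the drop from dimension $n$ to $n-1$ is precisely what turns the Ewald--Wessels bound $n-1$ into the strict bound $n-2$; the argument would collapse if Proposition \ref{proposition:repeated-weights} only preserved indices without simultaneously reducing dimension. The same reasoning applies verbatim to $\munorm$, since Proposition \ref{proposition:repeated-weights} and Theorem \ref{theorem:EW} treat both invariants in parallel.
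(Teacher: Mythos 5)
Your argument is correct and is exactly the reasoning the paper intends: the corollary is stated as an immediate consequence of Proposition \ref{proposition:repeated-weights}, with the dimension drop feeding into the Ewald--Wessels bound (Theorem \ref{theorem:EW}) to give $\muva(\PP) = \muva(\widetilde{\PP}) \leq \munorm(\widetilde{\PP}) \leq n-2$, contradicting the hypothesis. No gaps; the same chain handles $\munorm$.
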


\begin{corollary}\label{corollary:3-distinct-weights-VA}
    Any weighted projective space with no more than 3 distinct weights has very ample index 1.
    Moreover, any embedding in a complete linear system is projectively normal.
\end{corollary}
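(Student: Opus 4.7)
The plan is a two-step reduction: first eliminate repeated weights via Proposition \ref{proposition:repeated-weights}, then invoke the Ewald--Wessels bound in small dimension. Since $\PP(a_0, \dots, a_n)$ is invariant under permutation of its weight vector, any repeated weight can be moved into the last coordinate, at which point Proposition \ref{proposition:repeated-weights} applies. Iterating the deletion of one copy of a repeated weight at a time produces a weighted projective space $\widetilde{\PP} = \PP(b_0, \dots, b_m)$ whose weights are all distinct and satisfy $m + 1 \leq 3$, together with the identities
\[
\muva(\PP) = \muva(\widetilde{\PP}), \qquad \munorm(\PP) = \munorm(\widetilde{\PP}).
\]

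In particular $\widetilde{\PP}$ has dimension at most $2$, so the polytope associated to $(\widetilde{\PP}, \mathcal{O}(\ell))$ (where $\ell$ is the \lcm\ of the $b_i$) is a simplex of dimension at most $2$. Applying Theorem \ref{theorem:EW} to this simplex forces $\munorm(\widetilde{\PP}) \leq \max(\dim - 1, 0) \leq 1$, which already gives the moreover clause: the generator of $\Pic\widetilde{\PP}$ and all its powers define projectively normal embeddings. Since normality implies very ampleness (Proposition \ref{proposition:toric-polytope-correspondence}), $\muva(\widetilde{\PP}) \leq 1$ as well, and equality holds as soon as $\widetilde{\PP}$ has positive dimension. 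Transporting back through the reduction yields $\muva(\PP) = 1$ and $\munorm(\PP) = 1$, which is exactly the corollary.

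There is no genuine obstacle: the content of the corollary is just a clean packaging of Ewald--Wessels once the repeated-weights reduction has been applied. The only point requiring minor care is the degenerate case in which $\widetilde{\PP}$ collapses to a single point (all weights of $\PP$ equal), but then $\PP$ is itself an ordinary projective space up to rescaling and both claims are immediate.
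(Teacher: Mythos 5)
Your argument is correct and matches the paper's (implicit) proof: the corollary is presented as an immediate consequence of Proposition \ref{proposition:repeated-weights}, which reduces to a weighted projective space with at most three distinct weights, hence to a lattice polytope of dimension at most two, where Theorem \ref{theorem:EW} (equivalently, the classical fact that lattice polygons are normal) gives normality of every dilation and thus $\munorm = \muva = 1$. Your handling of the permutation step and the degenerate all-equal-weights case is fine and requires no changes.
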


\begin{remark}
    Note that this proof actually works for any toric variety, only that ``repeated weights'' doesn't have such a concrete description.
    In the general case, we can collapse any two variables in the Cox ring which are equivalent in the class group.
\end{remark}

\subsection{Lattice points on edges} \hfill

\begin{definition}
    Let $k \in \ZZ_+$ and $P \subset \RR^n$ be a polytope. We say that $P$ satisfies the \textbf{$\bm{k}$ lattice points on edges} or $\bm{\LPE(k)}$ property if each edge of $P$ has at least $k$ lattice points.
\end{definition}

Since our polytopes have all of its vertices in the lattice, any $P \subset \RR^n$ satisfies the $\LPE(2)$ property.

\begin{remark}
    We note that the condition for lattice points on edges can be expressed purely in toric geometric terms.
    Given $P$ and corresponding pair $(X,L)$, an edge $E \subset P$ corresponds to a $T$-invariant curve $C \subset X$.
    Then $|E \cap \ZZ^n| = D \cdot C$, where $D$ is an effective divisor such that $L \cong \cO_X(D)$.
\end{remark}

We now prove a result, due initially to Payne \cite{Payne:2006} in a more general context, which states that given a $n$-dimensional polytope $P$, then $\LPE(n)$ implies very ampleness. Our proof is shorter and more concise, although the core idea is the same.

\begin{theorem}\label{theorem:Payne-LPE}
Let $P \subset \RR^n$ be a polytope satisfying the $\LPE(n)$ property. Then $P$ is very ample.
\end{theorem}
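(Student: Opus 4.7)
The plan is to invoke Proposition~\ref{prop:very-ample-saturated}, reducing the problem to showing that the vertex semigroup $S_{P,v} = \NN(P \cap \ZZ^n - v)$ is saturated for every vertex $v$ of $P$. After translating so that $v = 0$, this amounts to proving $\Cone(P) \cap L \subseteq S_{P,0}$, where $L$ is the lattice generated by $P \cap \ZZ^n$ and $\Cone(P)$ is the tangent cone at $v$. This cone is spanned by the primitive edge vectors $u_1, \ldots, u_r$ of $P$ at $v$, and the $\LPE(n)$ hypothesis gives us the crucial fact that $k u_i \in P \cap \ZZ^n$ for every $1 \leq k \leq n-1$ and every $i$.

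First I would take $x \in \Cone(P) \cap L$ and, using Carath\'eodory's theorem on a simplicial subcone of $\Cone(P)$ containing $x$, write $x = \sum_{i=1}^{s} c_i u_i$ with $c_i \in \RR_{\geq 0}$ and $s \leq n$. Splitting $c_i = \lfloor c_i \rfloor + \{c_i\}$, the integer contribution $\sum_i \lfloor c_i \rfloor u_i$ is plainly in $S_{P,0}$ since $u_i \in P \cap \ZZ^n$. Everything thus hinges on the fractional residue $y := \sum_i \{c_i\} u_i$, which is a lattice point in the half-open parallelepiped of the chosen simplicial cone.

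Next I would split into two sub-cases according to the value of $\sum_i \{c_i\}$. If $\sum_i \{c_i\} \leq n-1$, then $y$ lies in the dilated simplex $(n-1)\,\Conv(0, u_1, \ldots, u_s)$, which by $\LPE(n)$ and convexity of $P$ is contained in $P$; hence $y \in P \cap \ZZ^n \subseteq S_{P,0}$ and we are finished. In the complementary case we must have $s = n$, all $\{c_i\} \in (0,1)$, and $\sum_i \{c_i\} > n-1$. Here I would introduce the reflected lattice point $y' := \sum_{i=1}^n u_i - y = \sum_i (1 - \{c_i\}) u_i$; since $\sum_i (1 - \{c_i\}) < 1$, it lies in the small simplex $\Conv(0, u_1, \ldots, u_n) \subseteq P$, so $y' \in P \cap \ZZ^n \subseteq S_{P,0}$, while also $\sum_i u_i \in S_{P,0}$ from the identity $\sum_i u_i = y + y'$.

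The hard part will be this last reduction: semigroups do not admit subtraction, so the memberships $y', \sum_i u_i \in S_{P,0}$ do not formally yield $y \in S_{P,0}$. Closing the gap requires writing $y$ explicitly as a sum of lattice points of $P$, exploiting the additional lattice points guaranteed by $\LPE(n)$ on the edges not incident to $v$, or equivalently verifying saturation coset by coset in the finite quotient $L / \bigoplus_i \ZZ u_i$. This case analysis is the technical heart of Payne's argument, and is where I expect the main obstacle to lie.
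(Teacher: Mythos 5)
Your opening reduction is the same as the paper's: by Proposition~\ref{prop:very-ample-saturated} it suffices to saturate each vertex semigroup, and your Case~1 (fractional sum $\leq n-1$, so the box point lies in $(n-1)\Conv(0,u_1,\dots,u_s)\subseteq P$ by $\LPE(n)$) is correct. But the proposal stops exactly where the proof has to happen, and the gap you flag in Case~2 is genuine, not a routine verification. Points $y=\sum_i\{c_i\}u_i$ with all $\{c_i\}\in(0,1)$ and $n-1<\sum_i\{c_i\}<n$ really do occur: unlike in the standard proof of Theorem~\ref{theorem:EW}, where one works with homogenised coordinates and the fractional coefficient sum is forced to be an integer (hence $\leq n-1$), here there is no height function, so you cannot push $y$ into $(n-1)\Conv(0,u_1,\dots,u_n)$. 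Your reflection $y'=\sum_i u_i-y$ gives memberships that, as you yourself note, cannot be subtracted inside a semigroup, and the suggestion to check ``coset by coset in $L/\bigoplus_i\ZZ u_i$'' is not an argument; there is also a latent lattice issue there, since the group generated by the lattice points of the small corner simplex may be strictly smaller than $L=\mathrm{gp}(S_{P,0})$, so saturation statements about that simplex do not transfer to $x\in L$ without further input.

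The missing idea, which is how the paper (following Payne) closes exactly this case, is to avoid the box-point analysis altogether: set $Q=(n-1)\Conv(v,f_1,\dots,f_s)$, where the $f_i$ are the first lattice points on the edges at $v$; by $\LPE(n)$ one has $Q\subseteq P$, by Theorem~\ref{theorem:EW} the dilate $Q$ is normal, and since $Q$ and $P$ span the same tangent cone at $v$ and $P$ is bounded, $P\subseteq l\cdot Q$ for some $l$. Normality of $Q$ then lets you write every lattice point of $P$ (in particular your troublesome $y$) as a sum of $l$ lattice points of $Q\subseteq P$, which is the content of Lemma~\ref{lemma:vertex-semigroups}: it yields $S_{P,v}=S_{Q,v}$ (so in particular the groups agree), and saturation of $S_{P,v}$ is inherited from the very ample polytope $Q$. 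In short, the decisive step is to import Ewald--Wessels normality of the corner polytope and compare semigroups via $Q\subseteq P\subseteq lQ$, rather than to classify lattice points of the fundamental parallelepiped; without that (or an equivalent substitute) your Case~2 remains open and the proof is incomplete.
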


\begin{proof}
We shall prove that every vertex semigroup is saturated.
Pick a vertex $v \in P$.
Since $S_{P,v}=S_{P-v,\underline{0}}$ we shall replace $P$ by $P-v$ and assume that $v=\underline{0}$.
Suppose that $E_1, \dots , E_s$ are the edges with $v$ as a vertex and label $f_1, \dots , f_s$ the lattice points of the $E_i$'s closest to $v$.
Define 
\[Q = (n-1) \cdot \Conv(v,f_1, \dots , f_s).\]
Since $|E_i \cap \ZZ^n| \geq n$ we have that $Q\subset P$ and both polytopes $P$ and $Q$ having $v$ as a vertex.
Note that, by construction, $Q$ is normal by Proposition \ref{theorem:EW} and there exists an $l>0$ such that $P \subset l \cdot Q$.
Thus, by the Lemma \ref{lemma:vertex-semigroups} below, we have that the semigroups generated at $v$ are equal, that is $S_{P,v} = S_{Q,v}$.
Thus, since $Q$ is normal and hence very ample, we have that $S_{P,v}$ is saturated.

We repeat this argument at every vertex and conclude via the very ample description of Proposition \ref{prop:very-ample-saturated} that $P$ is very ample.
\end{proof}

\begin{lemma}\label{lemma:vertex-semigroups}
Consider $Q \subset P$, two polytopes, both with the origin $v=\underline{0}$ as a vertex.
Suppose that $Q$ is normal and that there exists an $l\in \NN$ such that $P \subset l \cdot Q$.
Then $S_{P,v} = S_{Q,v}$ and both are saturated.
\end{lemma}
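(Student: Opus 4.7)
The plan is to reduce everything to the IDP property of $Q$ and then invoke Proposition~\ref{prop:very-ample-saturated}. First I would observe that since $v = \underline{0}$ the vertex semigroups simplify to $S_{P,v} = \NN(P \cap \ZZ^n)$ and $S_{Q,v} = \NN(Q \cap \ZZ^n)$, so the inclusion $Q \subset P$ immediately gives $S_{Q,v} \subset S_{P,v}$ for free.

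For the reverse inclusion, I would argue pointwise on generators of $S_{P,v}$. Take any lattice point $x \in P \cap \ZZ^n$. The hypothesis $P \subset l \cdot Q$ places $x$ inside $lQ \cap \ZZ^n$, and normality of $Q$ unwinds this into a sum $x = x_1 + \cdots + x_l$ with each $x_i \in Q \cap \ZZ^n$. Since each summand already lies in $S_{Q,v}$, so does $x$, establishing $S_{P,v} \subset S_{Q,v}$ and hence equality of the two semigroups.

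Finally, saturation would follow because any normal polytope is, by Definition~\ref{definition:normal-very-ample} (with $k_0 = 1$), very ample; Proposition~\ref{prop:very-ample-saturated} then gives that the vertex semigroup $S_{Q,v}$ is saturated. The equality $S_{P,v} = S_{Q,v}$ transfers saturation to $S_{P,v}$ as well.

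There is no real obstacle here: the argument simply repackages the IDP definition of normality together with the very-ampleness/saturation dictionary of Proposition~\ref{prop:very-ample-saturated}. The only conceptual point worth highlighting is that, although $P$ itself need not be normal, the lattice points of $P$ are \emph{second-hand} decomposable via $Q$, which is the entire content of the hypothesis $P \subset l \cdot Q$.
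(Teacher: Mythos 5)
Your proof is correct and follows essentially the same route as the paper: the inclusion $S_{Q,v}\subset S_{P,v}$ from $Q\subset P$, the reverse inclusion by decomposing each $x\in P\cap\ZZ^n\subset lQ$ into $l$ lattice points of $Q$ via normality, and saturation via normality implies very ampleness and Proposition~\ref{prop:very-ample-saturated}.
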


\begin{proof}
Since $Q\cap \ZZ^n \subset P\cap \ZZ^n$, clearly $S_{Q,v} = \NN(Q\cap \ZZ^n) \subset \NN(P \cap \ZZ^n) = S_{P,v}$.

For the opposite inclusion take an arbitrary $p \in P \cap \ZZ^n$.
Then as $P \subset l \cdot Q$ we have that $p \in l \cdot Q$ and by normality of $Q$, there exists $q_1, \dots , q_l \in Q$ such that $p = q_1 + \cdots + q_l$.
Hence $p \in \NN(Q \cap \ZZ^n)$.
Thus $P \cap \ZZ^n \subset \NN(Q \cap \ZZ^n)$ and we conclude
\[\NN(P \cap \ZZ^n) \subset \NN(Q \cap \ZZ^n)\]
as required.

Since $Q$ is normal, it is very ample and thus every vertex semigroup is saturated by Proposition \ref{prop:very-ample-saturated}.
In particular, $S_{Q,v} = S_{P,v}$ is saturated.
\end{proof}

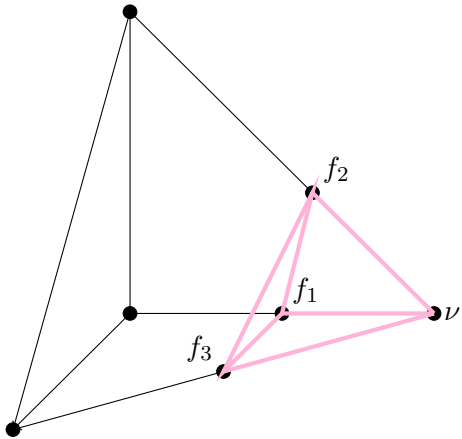
\begin{figure}[!hbt]
\begin{center}
\begin{tikzpicture}
\draw[fill] (0,0,0) circle [radius=0.090];
\draw[fill] (4,0,0) circle [radius=0.090];
\draw[fill] (0,4,0) circle [radius=0.090];
\draw[fill] (0,0,4) circle [radius=0.090];
\draw[fill] (2.4,1.6,0) circle [radius=0.090];
\draw[fill] (2,0,2) circle [radius=0.090];
\draw[fill] (2,0,0) circle [radius=0.090];
\draw (0,4,0)--(0,0,0)--(2,0,0)--(4,0,0)--(2,0,2)--(0,0,4)--(0,4,0)--(2.4,1.6,0)--(4,0,0);
\draw [->] (0,0,0)--(0,0,4);
\draw[ultra thick,pastel-pink] (2,0,0)--(2,0,2)--(2.4,1.6,0)--(2,0,0);
\draw[ultra thick,pastel-pink] (2,0,0)--(4,0,0);
\draw[ultra thick,pastel-pink] (2.4,1.6,0)--(4,0,0);
\draw[ultra thick,pastel-pink] (2,0,2)--(4,0,0);

\node[right] at (4,0,0) {$\nu$};
\node[above right] at (2,0,0) {$f_{1}$};
\node[above right] at (2.4,1.6,0) {$f_{2}$};
\node[above left] at (2,0,2) {$f_{3}$};

\end{tikzpicture}
\end{center}
\caption{The corner polytope $Q$ is shown here in pink.}
\end{figure}

Let us apply this to simplices and weighted projective spaces.

\begin{corollary}\label{corollary:distinct-weights}
Consider a weighted projective space $\PP = \PP(a_0, \dots , a_n)$.
If all distinct weights are pairwise coprime, then $\munorm(\PP) = 1$.
\end{corollary}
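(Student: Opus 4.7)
The plan is to combine three ingredients already available in the paper: the reduction for repeated weights, Payne's lattice-points-on-edges criterion, and a combinatorial semigroup argument for projective normality.

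By Proposition~\ref{proposition:repeated-weights} I may drop any repeated weights without changing $\munorm(\PP)$, so I reduce to the case of pairwise coprime \emph{distinct} weights $1 = a_0 < a_1 < \dots < a_n$. Via Lemma~\ref{lemma:projection-to-rec-simplices} (together with the scaling remark that follows it) the problem translates to the rectangular simplex $\Delta(\lambda) = \Delta(l_1, \dots, l_n)$, where $l_i = l/a_i = \prod_{j \neq i,\, j \geq 1} a_j$ and $l = \prod_{i \geq 1} a_i$.

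Next I verify that $\Delta(\lambda)$ satisfies $\LPE(n)$. The edge from the origin to $l_i e_i$ contains $l_i + 1$ lattice points, while the edge between $l_i e_i$ and $l_j e_j$ contains $\gcd(l_i, l_j) + 1 = \prod_{k \neq i, j,\, k \geq 1} a_k + 1$ points. Pairwise coprimality of the distinct $a_i \geq 2$ makes each such count at least $n$ for $n \geq 2$, so $\Delta(\lambda)$ satisfies $\LPE(n)$. Theorem~\ref{theorem:Payne-LPE} then yields very ampleness, already giving $\muva(\PP) = 1$.

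To promote this to $\munorm(\PP) = 1$ I must show that every dilation $k\Delta(\lambda)$ is normal for $k \geq 1$. Theorem~\ref{theorem:EW} handles $k \geq n - 1$, so only the cases $1 \leq k \leq n - 2$ remain. Translating back to the Cox ring, I must show that every degree-$kl$ monomial $x_0^{d_0}\cdots x_n^{d_n}$ factors as a product of $k$ degree-$l$ monomials. I would argue by induction on $k$, extracting one degree-$l$ submonomial $x^e$ with $e \leq d$ componentwise at each step: when some $d_i \geq l_i$, the vertex monomial $x_i^{l_i}$ works; when all $d_i < l_i$, pairwise coprimality of the $a_i$'s gives many representations $l = \sum e_i a_i$ with $e_i \geq 0$, and the slack $\sum d_i a_i = kl \geq 2l$ should permit one with $e_i \leq d_i$.

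The main obstacle is this constrained extraction when all $d_i < l_i$. I expect the cleanest route to use the almost 1-normality framework of Bruns and Gubeladze: since $\LPE(n)$ already implies A1N (Theorem~\ref{theorem:lpe-implies-a1n}), one can leverage Frobenius-coin-style solvability of the relevant numerical-semigroup equations to produce the required submonomial. Carrying this out uniformly for all $1 \leq k \leq n - 2$, keeping track of residues modulo each $a_i$, is the technical heart of the proof.
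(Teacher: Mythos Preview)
The paper's own proof of this corollary establishes only $\muva(\PP)=1$: it checks that the simplex $\Delta_{\underline{a},l}$ satisfies $\LPE(n)$ and then invokes Theorem~\ref{theorem:Payne-LPE}. This is consistent with the restatement in the Introduction, which reads ``Then $\muva(\PP)=1$'' and immediately adds ``Note that we do not say whether the corresponding embedding is projectively normal.'' The appearance of $\munorm$ in the body of the corollary is almost certainly a typo for $\muva$. Your verification of $\LPE(n)$ and appeal to Theorem~\ref{theorem:Payne-LPE} is exactly the paper's argument for the very-ampleness half.

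Your attempt to upgrade to genuine normality, however, does not go through, and you are right to flag the constrained-extraction step as the obstacle. The route you suggest---use Theorem~\ref{theorem:lpe-implies-a1n} to get almost 1-normality and then leverage Frobenius-type solvability---cannot work as stated, because A1N is strictly weaker than normality (see Figure~\ref{fig:implications}); it only controls the single element $L-1$ in the semigroup, not the full IDP for every $k$. There is also a smaller slip: Theorem~\ref{theorem:EW} says that $k\Delta(\lambda)$ is \emph{normal} for $k\geq n-1$, which is not the same as the IDP identity $k\Delta(\lambda)\cap\ZZ^n = k(\Delta(\lambda)\cap\ZZ^n)$ that you need; so it does not dispose of the large-$k$ cases of your inductive scheme either. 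In short, the $\munorm$ claim as literally written is not proved in the paper and your proposal does not fill that gap; what you have correctly established is $\muva(\PP)=1$, matching the paper.
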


\begin{proof}
By Proposition \ref{proposition:repeated-weights} we can assume that all the weights are distinct.
We must show that every edge has $n$ lattice points.
We can identify the vertices with the monomials $x_i^{l_i}$ and the lattice points of the edge connecting $x_i^{l_i}$ and $x_j^{l_j}$ with the monomials of the form $x_i^\alpha x_j^\beta$ where $\alpha \cdot a_i + \beta \cdot a_j = l$.
Thus we must show that the equation $\alpha \cdot a_i + \beta \cdot a_j = l$ has at least $n$ solutions $(\alpha,\beta) \in \NN^2$.
But since $l = a_0 \cdots a_n$, we have precisely $l_{ij}+1$ positive solutions, where $l_{ij}:=\frac{l}{a_ia_j}$.
Then as the $a_i$ are distinct
\[l_{ij}+1 \geq (n-1)! + 1 \geq n\]
for $n>0$ and so we're done.
\end{proof}

\begin{remark}
    Note that Corollary \ref{corollary:distinct-weights} and Corollary \ref{corollary:3-distinct-weights-VA} together prove that $\PP(1,6,10,15)$ is the first example of a weighted projective space with a non-very ample line bundle.
\end{remark}

Later, in \cite{Gubeladze:2012}, Gubeladze proved the following:

\begin{theorem}\label{theorem:gubeladze-lpe}\cite[Theorem 1.3]{Gubeladze:2012}
    Let $P \subset \RR^n$ be a polytope satisfying the $\LPE(4n(n + 1))$ property. Then $P$ is normal.
\end{theorem}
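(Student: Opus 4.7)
The plan is to prove that for every $k \in \ZZ_+$ and every $x \in kP \cap \ZZ^n$, the point $x$ decomposes as a sum of $k$ lattice points of $P$. The point of departure is the proof of Theorem \ref{theorem:Payne-LPE}: with the much stronger hypothesis $\LPE(4n(n+1))$ in place of $\LPE(n)$, one should be able to upgrade the ``saturated at every vertex'' conclusion, which only yields very ampleness, to a full IDP statement at each dilate by exploiting the extra lattice points on each edge to perform constructive decompositions.

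First I would fix a vertex $v \in P$, translate it to the origin, and let $f_1, \ldots, f_s$ be the first lattice points along the edges incident to $v$. Because each such edge carries at least $4n(n+1)$ lattice points, the corner polytope $Q = 4n(n+1)\cdot \Conv(v, f_1, \ldots, f_s)$ is contained in $P$ and is normal by Theorem \ref{theorem:EW}, which (as in Lemma \ref{lemma:vertex-semigroups}) gives that $S_{P,v}$ is saturated. Combined with an induction on dimension, which lets us assume every proper face of $P$ is already normal (each face inherits $\LPE(4n(n+1))$ and has smaller dimension), this reduces the problem to decomposing lattice points $x$ in the interior of $kP$. For such an $x$ I would take a Carathéodory-type decomposition $x = \sum \mu_i v_i$ against a triangulation of $P$, with $\mu_i \in \QQ_{\geq 0}$ summing to $k$, and then round the $\mu_i$ to integers while absorbing the resulting error by sliding along the many lattice points available on each edge of the containing simplex.

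The main obstacle is the quantitative rounding step: one must show that the accumulated error from replacing the rational coefficients $\mu_i$ by integers can be corrected inside $P \cap \ZZ^n$ without ever leaving $P$. This is precisely where the quadratic lower bound $4n(n+1) \sim n^2$ should appear, since an $n$-simplex has $\binom{n+1}{2}$ edges and each edge can absorb only a bounded amount of correction; multiplying these two factors yields the required constant. Avoiding circularity, since we only know the corner polytopes to be normal and not $P$ itself, and handling lattice points of $kP$ that do not lie close to any single vertex, are the subtle points; I expect that in practice one would have to invoke Gubeladze's technical machinery of unimodular covers, rather than reprove the bound from first principles, to package the rounding and the vertex-by-vertex arguments into a global IDP statement.
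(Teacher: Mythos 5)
This statement is not proved in the paper at all: it is quoted verbatim from Gubeladze \cite[Theorem 1.3]{Gubeladze:2012}, and the paper's ``proof'' is the citation. So the benchmark here is whether your sketch would actually constitute an independent proof, and it would not. The central gap is in your rounding step. The $\LPE(4n(n+1))$ hypothesis concerns the edges of $P$ only. Once you pass to a triangulation of $P$ to run a Carath\'eodory-type decomposition $x=\sum\mu_i v_i$, the simplices of that triangulation have edges (interior diagonals, edges of faces of the triangulation) that need not contain a single lattice point besides their endpoints, no matter how long the edges of $P$ are. Hence ``sliding along the many lattice points available on each edge of the containing simplex'' has nothing to slide along, and the claimed absorption of the rounding error collapses. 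Relatedly, the vertex-corner argument in the style of Theorem \ref{theorem:Payne-LPE} only yields saturation of the vertex semigroups $S_{P,v}$, i.e.\ very ampleness; the hard lattice points of $kP$ are precisely those far from every vertex, and your induction on faces gives no handle on them, since they project to the interior of $P$ where no face hypothesis applies.

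The heuristic that the constant $4n(n+1)\sim n^2$ arises as (number of edges of a simplex) times (bounded correction per edge) is not a proof and does not reflect where the bound actually comes from; Gubeladze's argument is a genuinely involved covering argument for lattice polytopes with long edges, not a perturbation of Payne's corner-polytope trick. Your closing remark that ``in practice one would have to invoke Gubeladze's technical machinery of unimodular covers'' concedes the point: as written, the proposal defers the entire content of the theorem to the cited source, which is exactly what the paper does by citing \cite[Theorem 1.3]{Gubeladze:2012} -- but then the intermediate sketch should not be presented as a proof strategy, since its key step (error correction along triangulation edges) is unsound.
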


We may also characterise the $\LPE(k)$ property in rectangular simplices (and thus its consequences) in terms of a divisibility condition satisfied by its entries.

\begin{proposition}\label{proposition:lpe-characterisation}
    Let $k \in \ZZ^+$ and $\Delta(\lambda)$ be a rectangular simplex. The following are equivalent:
    \begin{enumerate}[1.]
        \item $\Delta(\lambda)$ has the $\LPE(k)$ property.
        \item For all $i, j \in [n]$ (not necessarily distinct), $\gcd(\lambda_i, \lambda_j) \geq k - 1$.
    \end{enumerate}
\end{proposition}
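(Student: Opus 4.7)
The plan is to enumerate the edges of $\Delta(\lambda)$ and compute the number of lattice points on each one directly. Since $\Delta(\lambda) = \Conv(0, \lambda_1 e_1, \ldots, \lambda_n e_n)$ is a simplex with $n+1$ affinely independent vertices, its edges are exactly the $\binom{n+1}{2}$ segments joining pairs of vertices. These come in two types: the $n$ \emph{axis edges} from $0$ to $\lambda_i e_i$, and the $\binom{n}{2}$ \emph{slanted edges} from $\lambda_i e_i$ to $\lambda_j e_j$ with $i \neq j$.

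I will use the standard fact that a lattice segment with endpoints $u, v \in \ZZ^n$ contains exactly $\gcd(v - u) + 1$ lattice points, where $\gcd$ denotes the gcd of the components. For the axis edge from $0$ to $\lambda_i e_i$, the direction vector is $\lambda_i e_i$, so the edge contains $\lambda_i + 1$ lattice points (the points $0, e_i, 2 e_i, \ldots, \lambda_i e_i$, which can also be checked by inspection). For the slanted edge from $\lambda_i e_i$ to $\lambda_j e_j$, the direction vector is $\lambda_j e_j - \lambda_i e_i$, whose components are $\pm \lambda_i, \pm \lambda_j, 0, \ldots, 0$, so this edge contains $\gcd(\lambda_i, \lambda_j) + 1$ lattice points.

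To finish, I note that the $\LPE(k)$ condition says each edge has at least $k$ lattice points. For the slanted edges this translates to $\gcd(\lambda_i, \lambda_j) \geq k - 1$ for all distinct $i, j \in [n]$, while for the axis edges it reads $\lambda_i \geq k - 1$ for all $i \in [n]$. Using the convention $\gcd(\lambda_i, \lambda_i) = \lambda_i$, the two conditions combine into the uniform statement that $\gcd(\lambda_i, \lambda_j) \geq k - 1$ for all $i, j \in [n]$, not necessarily distinct. This establishes the equivalence in both directions.

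There is no serious obstacle; the only minor subtlety is the bookkeeping that the "$i = j$" case of the right-hand condition exactly recovers the axis-edge constraint, which is why the statement is phrased with $i, j$ not necessarily distinct rather than with a separate clause for the edges emanating from the origin.
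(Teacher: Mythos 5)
Your proposal is correct and follows essentially the same route as the paper: split the edges into those through the origin and those joining $\lambda_i e_i$ to $\lambda_j e_j$, count lattice points on each via the standard $\gcd$ formula for a lattice segment, and absorb the axis-edge case into the convention $\gcd(\lambda_i,\lambda_i)=\lambda_i$.
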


\begin{proof}
    The edges of $\Delta(\lambda)$ either start at $0$ and end at $\lambda_le_l$ for some $l \in [n]$, or start at $\lambda_ie_i$ and end at $\lambda_je_j$ for some $i, j \in [n]$. The edges that start at $0$ have $\lambda_l + 1 = \gcd(\lambda_l, \lambda_l) + 1$ lattice points. Let $E$ be one of the remaining edges. $E$ is fully contained in the $(i, j)$ plane, and so is lattice-equivalent to the line segment starting at $(\lambda_i, 0)$ and ending at $(0, \lambda_j)$ in $\RR^2$. It is well-known that the number of lattice points is then $\gcd(\lambda_i, \lambda_j) + 1$. 
\end{proof}

\subsection{Subsequences and extensions}\label{subsection:subsequences-and-extensions} \hfill

All statements in this subsection are still true, and definitions still make sense, after replacing ``normality'' by ``very ampleness''.

We start with an immediate consequence of the definitions.

\begin{proposition}\label{proposition:normal-implies-normal-subsequences}
    If $\Delta(\lambda)$ is normal, then $\Delta(\lambda')$ is normal for every subsequence $\lambda' \subseteq \lambda$.
\end{proposition}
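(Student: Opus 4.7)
The plan is to realise $\Delta(\lambda')$ as a face of $\Delta(\lambda)$ (up to lattice isomorphism), and then invoke the standard fact that normality of a lattice polytope is inherited by its faces.

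For the first step, write $\lambda' = (\lambda_{i_1}, \ldots, \lambda_{i_k})$ for some $J = \{i_1 < \cdots < i_k\} \subseteq [n]$, and consider the coordinate subspace $V_J = \{x \in \RR^n : x_j = 0 \text{ for all } j \notin J\}$. Each hyperplane $\{x_j = 0\}$ with $j \notin J$ is a supporting hyperplane of $\Delta(\lambda)$ (since $x_j \geq 0$ on $\Delta(\lambda)$), so $F := \Delta(\lambda) \cap V_J$ is an intersection of facets and hence a face of $\Delta(\lambda)$. Under the obvious lattice isomorphism $V_J \cap \ZZ^n \cong \ZZ^k$, the face $F$ corresponds precisely to $\Delta(\lambda')$, so it suffices to transfer normality from $\Delta(\lambda)$ to $F$.

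For the second step, I would prove that any face $F$ of a normal lattice polytope $P$ is itself normal. Write $F = P \cap H$ for a supporting hyperplane $H = \{\langle \cdot, u\rangle = c\}$ with $\langle y, u\rangle \leq c$ for all $y \in P$. Given $m \geq 1$ and $x \in mF \cap \ZZ^n$, use the normality of $P$ to write $x = x_1 + \cdots + x_m$ with each $x_i \in P \cap \ZZ^n$. Then $\langle x, u\rangle = mc$ while each $\langle x_i, u\rangle \leq c$, so equality must hold in every summand, forcing $x_i \in F$ for all $i$. This exhibits $x$ as a sum of lattice points of $F$, establishing normality.

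Combining the two steps yields the proposition. There is no substantive obstacle here; the only points requiring a bit of care are verifying that $F$ is genuinely a face (and not merely a slice by a linear subspace) and checking that the identification $F \leftrightarrow \Delta(\lambda')$ is a lattice equivalence, so that the $\mathrm{IDP}$ condition transfers cleanly.
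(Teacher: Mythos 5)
Your proof is correct, and it spells out exactly the reasoning the paper leaves implicit: the proposition is stated there as an immediate consequence of the definitions with no written proof, the intended justification being precisely your two steps, namely that $\Delta(\lambda')$ sits inside $\Delta(\lambda)$ as the coordinate face cut out by the supporting hyperplanes $\{x_j = 0\}$, $j \notin J$ (a lattice-equivalent copy, since $\bigoplus_{j\in J}\ZZ e_j$ is a direct summand of $\ZZ^n$), and that faces of normal polytopes are normal via the supporting-functional equality argument. Nothing is missing; your care about $F$ being a genuine face and about the lattice identification is exactly the right level of scrutiny.
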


We may now define two \emph{extremal} properties of rectangular simplices, the first one involving subsequences of $\lambda$.

\begin{definition}

    \begin{enumerate}[1.]
        \item We say $\Delta(\lambda)$ is \textbf{maximally non-normal} if $\mu_{\norm}(\Delta(\lambda)) = n - 1$.
        \item We say $\Delta(\lambda)$ is \textbf{sequentially non-normal} if, for every proper subsequence $\lambda' \subset \lambda$, $\Delta(\lambda')$ is normal.
    \end{enumerate}
\end{definition}

\begin{proposition}
    If $\Delta(\lambda_1, \lambda_2, \lambda_3)$ is non-normal, then it is sequentially non-normal.
\end{proposition}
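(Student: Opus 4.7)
The plan is to reduce the statement to a direct application of the Ewald--Wessels theorem in low dimensions, together with the observation that proper subsequences of a length-$3$ tuple are very short. A proper subsequence $\lambda' \subsetneq (\lambda_1, \lambda_2, \lambda_3)$ has length $1$ or $2$, so the corresponding rectangular simplex $\Delta(\lambda')$ is a lattice polytope of dimension at most $2$.

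The main step is then to invoke Theorem \ref{theorem:EW}: for an $n$-dimensional lattice polytope $P$, the dilation $kP$ is normal for every $k \geq n - 1$. In dimension $n = 1$ this is immediate (a lattice segment always satisfies the IDP), while in dimension $n = 2$ the bound reads $k \geq 1$, so $P$ itself is normal. Applying this to each proper subsequence $\lambda' \subsetneq (\lambda_1, \lambda_2, \lambda_3)$ shows that $\Delta(\lambda')$ is normal. By definition, this means $\Delta(\lambda_1, \lambda_2, \lambda_3)$ is sequentially non-normal.

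There is essentially no obstacle in the argument; the proposition is a one-line consequence of Ewald--Wessels once one unpacks the dimensions of the subsimplices. It is worth remarking that the hypothesis that $\Delta(\lambda_1, \lambda_2, \lambda_3)$ is itself non-normal is never used in the proof and is included only to frame the statement: by Proposition \ref{proposition:normal-implies-normal-subsequences}, a normal simplex would trivially satisfy the definition of being sequentially non-normal, so the content of the proposition lies in identifying non-normal $3$-dimensional rectangular simplices as automatically ``minimally'' non-normal among their subsequence simplices.
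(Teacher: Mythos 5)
Your argument is correct and is exactly the reasoning the paper leaves implicit: every proper subsequence of a length-$3$ tuple gives a rectangular simplex of dimension at most $2$, and such polytopes are normal (equivalently, the case $k\geq n-1$ of Theorem \ref{theorem:EW} with $n\leq 2$; the paper itself invokes ``all convex lattice polygons are normal'' elsewhere). Your side remark about the non-normality hypothesis is consistent with the paper's literal definition of sequential non-normality, which only constrains the proper subsequences.
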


A maximally non-normal rectangular simplex gives a sequentially non-normal rectangular simplex:

\begin{proposition}
    If $\Delta(\lambda)$ is maximally non-normal, then $(n - 2) \Delta(\lambda)$ is sequentially non-normal.
\end{proposition}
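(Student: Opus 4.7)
The plan is to verify the two defining conditions of sequentially non-normal for the polytope $(n-2)\Delta(\lambda)$: namely, that it is itself non-normal, and that every proper sub-simplex is normal. Note that the statement is only of interest for $n \geq 3$, since any polytope of dimension at most $2$ is normal and hence no lower-dimensional rectangular simplex can be maximally non-normal.

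First I would unpack the maximality hypothesis. Since $\mu_{\norm}(\Delta(\lambda)) = n-1$, the Ewald--Wessels bound (Theorem~\ref{theorem:EW}) tells us that $k\Delta(\lambda)$ is normal for all $k \geq n-1$. If additionally $(n-2)\Delta(\lambda)$ were normal, then $k\Delta(\lambda)$ would be normal for all $k \geq n-2$, contradicting the minimality built into the definition of $\mu_{\norm}$. Hence $(n-2)\Delta(\lambda)$ is non-normal, giving the first required property.

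For the second property, observe that $(n-2)\Delta(\lambda) = \Delta((n-2)\lambda)$, and a proper subsequence of $(n-2)\lambda$ is precisely $(n-2)\lambda'$ for some proper subsequence $\lambda' \subsetneq \lambda$. If $\lambda'$ has $m$ entries with $m \leq n-1$, then $\Delta(\lambda')$ is a full-dimensional polytope in $\RR^m$. Applying Theorem~\ref{theorem:EW} in dimension $m$, we get that $k \cdot \Delta(\lambda')$ is normal whenever $k \geq m-1$. Since $n-2 \geq m-1$ for every $m \leq n-1$, the dilation $(n-2)\Delta(\lambda') = \Delta((n-2)\lambda')$ is normal. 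This establishes normality for every proper sub-simplex of $(n-2)\Delta(\lambda)$, completing the verification.

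I do not expect any real obstacle: the whole argument is essentially bookkeeping with dimensions together with a clean application of the Ewald--Wessels bound. The only subtlety worth flagging is the interaction between dilation and subsequence, namely that the operations commute in the sense $\Delta((n-2)\lambda') = (n-2)\Delta(\lambda')$, which allows us to match the definition of sequentially non-normal for the polytope $(n-2)\Delta(\lambda)$ exactly.
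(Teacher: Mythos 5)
Your argument is correct: non-normality of $(n-2)\Delta(\lambda)$ follows from the minimality in $\mu_{\norm}(\Delta(\lambda)) = n-1$, and normality of every proper sub-simplex follows from the Ewald--Wessels bound (Theorem~\ref{theorem:EW}) applied in dimension $m \leq n-1$, using $\Delta((n-2)\lambda') = (n-2)\Delta(\lambda')$. The paper states this proposition without proof, and your argument is exactly the intended one, so there is nothing further to compare.
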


\begin{remark}
    In general, however, the maximal non-normality and sequential non-normality of $\Delta(\lambda)$ are not related. Indeed, it can easily be verified with the help of a computer that $\Delta(5, 11, 23, 29)$ is maximally and sequentially non-normal, $\Delta(3, 7, 11, 23)$ is maximally non-normal but not sequentially non-normal (it contains $2$ non-normal subsequences), and $\Delta(3, 7, 11, 29)$ is sequentially non-normal but not maximally non-normal.
\end{remark}

It is also interesting to consider \emph{extensions} of $\lambda$. Here, we allow $\lambda$ to have non-negative entries.

\begin{proposition}\label{proposition:normality-extensions}
    The following are equivalent:
    \begin{enumerate}[1.]
        \item $\Delta(\lambda) = \Delta(\lambda_1, \ldots, \lambda_n)$ is normal.
        \item $\Delta(\lambda, 0) = \Delta(\lambda_1, \ldots, \lambda_n, 0)$ is normal.
        \item $\Delta(\lambda, 1) = \Delta(\lambda_1, \ldots, \lambda_n, 1)$ is normal.
    \end{enumerate}
\end{proposition}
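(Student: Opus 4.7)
The plan is to separate (1)~$\Leftrightarrow$~(2), which is essentially formal, from (1)~$\Leftrightarrow$~(3), which uses a pyramid-slice argument.

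For (1)~$\Leftrightarrow$~(2) I would simply observe that appending a $0$ to $\lambda$ does nothing geometrically: the convex hull of $\{0, \lambda_1 e_1, \ldots, \lambda_n e_n, 0\cdot e_{n+1}\}$ in $\RR^{n+1}$ is exactly the image of $\Delta(\lambda)$ under the lattice-preserving embedding $\RR^n \hookrightarrow \RR^{n+1}$ that sets the last coordinate to zero. Every dilation, lattice point, and Minkowski sum transports through this embedding, so the IDP condition for $k\Delta(\lambda, 0)$ is the verbatim IDP condition for $k\Delta(\lambda)$.

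For (1)~$\Leftrightarrow$~(3) the key observation is that $\Delta(\lambda, 1)$ is a lattice pyramid over $\Delta(\lambda)\times\{0\}$ with apex $e_{n+1}$. Writing out the defining inequalities, a point $(x,t) \in \RR^n \times \RR$ lies in $k\Delta(\lambda, 1)$ iff $x_i \geq 0$, $0 \leq t \leq k$, and $\sum_i x_i/\lambda_i \leq k - t$. Therefore the horizontal slice at integer height $t = j$ is $(k-j)\Delta(\lambda) \times \{j\}$, and in particular
$$\Delta(\lambda, 1) \cap \ZZ^{n+1} = \bigl((\Delta(\lambda) \cap \ZZ^n) \times \{0\}\bigr) \cup \{e_{n+1}\}.$$

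The direction (3)~$\Rightarrow$~(1) is then immediate: $\Delta(\lambda)\times\{0\}$ is the face of $\Delta(\lambda, 1)$ cut out by the supporting hyperplane $x_{n+1}=0$, and faces of normal polytopes are normal. For the converse (1)~$\Rightarrow$~(3), given $(x, j) \in k\Delta(\lambda, 1) \cap \ZZ^{n+1}$, the slice description places $x$ in $(k-j)\Delta(\lambda) \cap \ZZ^n$, so normality of $\Delta(\lambda)$ furnishes a decomposition $x = y_1 + \cdots + y_{k-j}$ with each $y_i \in \Delta(\lambda)\cap \ZZ^n$. Then
$$(x, j) = \sum_{i=1}^{k-j}(y_i, 0) + j\cdot e_{n+1}$$
exhibits $(x, j)$ as a sum of $k$ lattice points of $\Delta(\lambda, 1)$.

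There is no serious obstacle here; the only things to double-check are the degenerate cases $j = k$ (no $y_i$'s appear) and $j = 0$ (no copies of the apex appear), both of which are trivial. The content of the argument is really just the slice identification in the second paragraph.
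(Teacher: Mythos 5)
Your proof is correct and follows essentially the same route as the paper: the paper handles $1 \Rightarrow 3$ by citing the semigroup splitting $S_{\Delta(\lambda,1)} \cong S_{\Delta(\lambda)} \oplus \NN$ from Bruns--Gubeladze, which is exactly the pyramid-slice decomposition you prove by hand, and its $3 \Rightarrow 1$ uses the subsequence proposition, which is the same face argument you invoke. Your version is just a more explicit, self-contained write-up of the same facts.
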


\begin{proof}
    $1. \iff 2.$ is trivial. $1. \implies 3.$ is a consequence of the fact that $S_{\Delta(\lambda, 1)} \cong S_{\Delta(\lambda)} \oplus \NN$ as semigroups (this is also \cite{BrunsGubeladze:1999}, Proposition 2.1, (b)). $3. \implies 1.$ follows from Proposition \ref{proposition:normal-implies-normal-subsequences}.
\end{proof}

Another consequence of Proposition \ref{proposition:normal-implies-normal-subsequences} is that normality indices of subsequences and extensions are related:

\begin{proposition}
    Let $\lambda' \subseteq \lambda$ be a subsequence. Then,
    \[
    \mu_{\norm}(\Delta(\lambda')) \leq \mu_{\norm}(\Delta(\lambda)) = \mu_{\norm}(\Delta(\lambda, 0)) \leq \mu_{\norm}(\Delta(\lambda, 1)).
    \]
\end{proposition}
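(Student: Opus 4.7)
The plan is to reduce each of the three claims to either Proposition \ref{proposition:normal-implies-normal-subsequences} or a direct bijection of lattice points, since the definition of $\mu_{\norm}$ is a statement about \emph{every} sufficiently large dilation being normal.

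First I would prove the leftmost inequality $\mu_{\norm}(\Delta(\lambda')) \leq \mu_{\norm}(\Delta(\lambda))$ for $\lambda' \subseteq \lambda$. The key observation is that $k\Delta(\mu) = \Delta(k\mu)$ for any tuple $\mu$, and if $\lambda'$ is a subsequence of $\lambda$ then $k\lambda'$ is a subsequence of $k\lambda$. Thus, for any $k \geq \mu_{\norm}(\Delta(\lambda))$, the simplex $\Delta(k\lambda) = k\Delta(\lambda)$ is normal, so by Proposition \ref{proposition:normal-implies-normal-subsequences} applied to the subsequence $k\lambda' \subseteq k\lambda$, the simplex $\Delta(k\lambda') = k\Delta(\lambda')$ is normal. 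Taking the infimum gives the claimed bound.

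Next, I would prove the equality $\mu_{\norm}(\Delta(\lambda)) = \mu_{\norm}(\Delta(\lambda, 0))$. Here the point is that the simplex $\Delta(\lambda, 0) \subset \RR^{n+1}$ is just $\Delta(\lambda) \times \{0\}$. Hence the projection forgetting the last coordinate induces a bijection $k\Delta(\lambda, 0) \cap \ZZ^{n+1} \xrightarrow{\sim} k\Delta(\lambda) \cap \ZZ^n$ for every $k \in \ZZ_+$, compatible with addition. Therefore $k\Delta(\lambda, 0)$ has the IDP property if and only if $k\Delta(\lambda)$ does, and the normality indices coincide. (This is essentially the degenerate case of $1 \iff 2$ in Proposition \ref{proposition:normality-extensions}, and is consistent with our convention there that a zero entry is allowed.)

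Finally, the rightmost inequality $\mu_{\norm}(\Delta(\lambda, 0)) \leq \mu_{\norm}(\Delta(\lambda, 1))$ (which, combined with the equality, yields the stated inequality with $\Delta(\lambda)$) is a direct application of the first inequality, using that $(\lambda, 0)$ is a subsequence of $(\lambda, 1)$ up to a zero entry; more concretely, $\lambda$ is a subsequence of $(\lambda, 1)$, so step one gives $\mu_{\norm}(\Delta(\lambda)) \leq \mu_{\norm}(\Delta(\lambda, 1))$, and we combine this with the equality from step two. No step is genuinely hard; the only mild obstacle is bookkeeping around the fact that $\Delta(\lambda, 0)$ is not full-dimensional and so sits slightly outside our standing convention for polytopes, which we handle via the explicit lattice-point bijection above.
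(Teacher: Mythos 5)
Your proof is correct and takes essentially the route the paper intends: the proposition is stated there without proof as a consequence of Proposition \ref{proposition:normal-implies-normal-subsequences}, and your argument---applying that proposition to the dilated subsequences $k\lambda' \subseteq k\lambda$ and $k\lambda \subseteq (k\lambda, k)$, together with the lattice-point bijection for the zero extension---is exactly the intended filling-in of that claim. You are also right to claim only an inequality for the $(\lambda,1)$ extension, since $k\Delta(\lambda,1) = \Delta(k\lambda,k)$ rather than $\Delta(k\lambda,1)$, so the equivalence of Proposition \ref{proposition:normality-extensions} does not upgrade it to an equality.
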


\begin{remark}
    It can be verified computationally that very often
    \[
    \mu_{\norm}(\Delta(\lambda)) = \mu_{\norm}(\Delta(\lambda, 1)).
    \]
    Therefore, increasing the normality index cannot generally be achieved by extending $\lambda$. In other words, the existence of maximally non-normal rectangular $n$-simplices cannot be ascertained by studying rectangular $k$-simplices with $k < n$.
\end{remark}

Furthermore, almost 1-normality behaves predictably under certain extensions.

\begin{proposition} \hfill
    \begin{enumerate}[1.]
        \item $\Delta(\lambda)$ is almost 1-normal if and only if $\Delta(\lambda, 1)$ is almost 1-normal.
        \item $\Delta(\lambda_1, \ldots, \lambda_n, \lcm(\lambda_1, \ldots, \lambda_n))$ is almost 1-normal.
    \end{enumerate}
\end{proposition}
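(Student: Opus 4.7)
For both statements, the plan is to apply Definition~\ref{definition:a1n} directly, compute how the data $L,\ L_i,\ d$ transform under the two types of extension, and reduce each claim to a simple membership check in the semigroup generated by the $L_i$.

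\textbf{Part 1.} Write $L = \lcm(\lambda_1,\dots,\lambda_n)$, $L_i = L/\lambda_i$ and $d = \gcd(L_1,\dots,L_n)$ for $\Delta(\lambda)$, and let $L',\ L'_i,\ d'$ be the analogous data for $\Delta(\lambda,1)$. First I would observe $L' = \lcm(\lambda_1,\dots,\lambda_n,1) = L$, so $L'_i = L_i$ for $i \leq n$ and $L'_{n+1} = L$. Since each $L_i$ divides $L$, it follows that $d' = \gcd(L_1,\dots,L_n,L) = d$, and in particular $L'-d' = L-d$. The $(\Rightarrow)$ direction is then immediate since the semigroup generated by $L_1,\dots,L_n,L$ contains that generated by $L_1,\dots,L_n$. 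For $(\Leftarrow)$, suppose $L - d = \sum_{i=1}^{n} a_i L_i + a_{n+1} L$ with $a_i \in \NN$. Since $d \geq 1$ we have $L - d < L$, so the inequality $a_{n+1} L \leq L - d$ forces $a_{n+1} = 0$, showing $L-d$ already lies in $\langle L_1,\dots,L_n\rangle$.

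\textbf{Part 2.} Let $\mu = (\lambda_1,\dots,\lambda_n, L)$ where $L = \lcm(\lambda_1,\dots,\lambda_n)$. Compute $L'' = \lcm(\lambda_1,\dots,\lambda_n,L) = L$, so $L''_i = L_i$ for $i \leq n$ and $L''_{n+1} = L/L = 1$. Consequently $d'' = \gcd(L_1,\dots,L_n,1) = 1$, and A1N reduces to showing $L - 1 \in \langle L_1,\dots,L_n,1\rangle$. But $1$ is itself a generator, so $L - 1 = (L-1)\cdot 1$ settles the claim.

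Neither part poses a real obstacle: the only subtlety is recognising in Part 1 that adjoining $L$ as a generator cannot help reach values strictly less than $L$, which is what makes the extra generator harmless; and in Part 2 that appending the weight $L$ introduces the generator $1$, trivialising the A1N condition. I would present both arguments together as they share the basic bookkeeping of how $L$, $L_i$ and $d$ behave under the extension.
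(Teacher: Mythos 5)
Your proof is correct: the paper states this proposition without proof, and your direct computation of how $L$, the $L_i$ and $d$ transform under the two extensions is exactly the routine verification from Definition~\ref{definition:a1n} that the authors evidently intended. The two key observations — that $d'=d$ because $d$ divides $L$, forcing $a_{n+1}=0$ in any representation of $L-d<L$, and that appending the weight $L$ introduces the generator $1$ — are both sound, so nothing is missing.
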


\subsection{Dilations} \hfill

In this subsection we specialise \cite[Theorem 1.1, Corollary 1.4]{HerinSchenckSmith:2006} to improve Theorem \ref{theorem:EW} for rectangular simplices.

Let $\Delta(\lambda)$ be a rectangular simplex.

\begin{proposition}\label{proposition:lattice-point-relative-interior}
    Let $\RelInt(\Delta(\lambda))$ be the relative interior of $\Delta(\lambda)$. The following are equivalent:
    \begin{enumerate}[1.]
        \item $\RelInt(\Delta(\lambda)) \cap \ZZ^n \neq \emptyset$.
        \item $(1, \stackrel{n \text{ times}}{\ldots}, 1) \in \RelInt(\Delta(\lambda))$.
        \item $\sum_{i = 1}^n \frac{1}{\lambda_i} < 1$.
    \end{enumerate}
\end{proposition}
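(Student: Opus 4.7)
The plan is to unwind the definitions and work with the explicit hyperplane description of the simplex. The rectangular simplex $\Delta(\lambda)$ is the convex hull of the origin and the points $\lambda_i e_i$, so by inspection it equals
\[
\Delta(\lambda) = \left\{ x \in \RR^n_{\geq 0} \,:\, \sum_{i=1}^n \frac{x_i}{\lambda_i} \leq 1 \right\},
\]
and since $\Delta(\lambda)$ is full-dimensional in $\RR^n$, its relative interior is
\[
\RelInt(\Delta(\lambda)) = \left\{ x \in \RR^n \,:\, x_i > 0 \text{ for all } i, \text{ and } \sum_{i=1}^n \frac{x_i}{\lambda_i} < 1 \right\}.
\]
With these two descriptions in hand, the whole proposition reduces to a short chain of elementary implications.

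I would then prove $(2) \Leftrightarrow (3)$ directly from the description of $\RelInt(\Delta(\lambda))$: the all-ones vector has positive entries, and the remaining defining inequality $\sum 1/\lambda_i < 1$ is precisely condition $(3)$. The implication $(2) \Rightarrow (1)$ is immediate because $(1,\ldots,1) \in \ZZ^n$.

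The only step requiring a moment's thought is $(1) \Rightarrow (3)$, but it is also short. Suppose $a = (a_1, \ldots, a_n) \in \RelInt(\Delta(\lambda)) \cap \ZZ^n$. Then each $a_i$ is an integer with $a_i > 0$, hence $a_i \geq 1$. Dividing by $\lambda_i > 0$ and summing gives
\[
\sum_{i=1}^n \frac{1}{\lambda_i} \;\leq\; \sum_{i=1}^n \frac{a_i}{\lambda_i} \;<\; 1,
\]
which is $(3)$. This closes the loop $(1) \Rightarrow (3) \Rightarrow (2) \Rightarrow (1)$, and there is no real obstacle beyond setting up the two descriptions above; the content of the proposition is just that the all-ones vector is the ``cheapest'' candidate for a lattice point in the relative interior of a rectangular simplex.
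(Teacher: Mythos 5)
Your proof is correct and follows the same route as the paper, which likewise derives all three equivalences from the hyperplane presentation $x_i \geq 0$, $\sum_{i=1}^n x_i/\lambda_i \leq 1$ of $\Delta(\lambda)$; you simply spell out the implications $(1)\Rightarrow(3)\Rightarrow(2)\Rightarrow(1)$ that the paper leaves implicit. No gaps.
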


\begin{proof}
    Everything follows from the fact that the hyperplane presentation of $\Delta(\lambda)$ is given by the inequalities
    \[
    (\delta_{ij})_{j \in [n]} \cdot x \geq 0, \quad \forall i \in [n],
    \]
    where $\delta_{ij}$ is the Kronecker delta, and
    \[
    \left ( \frac{1}{\lambda_1}, \frac{1}{\lambda_2}, \ldots, \frac{1}{\lambda_n} \right ) \cdot x \leq 1.
    \]
\end{proof}

\begin{proposition}\label{proposition:milena-bound}
    Let $r \in \ZZ_+$ and define $d(\lambda) \coloneqq \left \lfloor \sum_{i = 1}^n \frac{1}{\lambda_i} \right \rfloor$. If $r \geq n - d(\lambda)$, then $r\Delta(\lambda)$ is normal.
\end{proposition}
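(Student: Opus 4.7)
The plan is to deduce the bound from \cite[Corollary 1.4]{HerinSchenckSmith:2006}, which (in the form we need) asserts that on a projective toric variety $X$ of dimension $n$ with an ample line bundle $L$, the power $rL$ is normally generated whenever $r \geq n - s$ for some non-negative integer $s$ with $-K_X - sL$ nef. Translating back via Proposition \ref{proposition:toric-polytope-correspondence}, normal generation of $r \cO_X(\ell)$ is exactly what we need to conclude that $r\Delta(\lambda)$ is normal.

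First I would pin down the toric data attached to $\Delta(\lambda)$. By Lemma \ref{lemma:projection-to-rec-simplices}, $\Delta(\lambda)$ is the (dehomogenised) section polytope of $\cO_X(\ell)$ on $X = \PP(1, a_1, \ldots, a_n)$, where $\ell = \lcm(\lambda_1, \ldots, \lambda_n)$ and $a_i = \ell/\lambda_i$. The presence of a weight equal to $1$ guarantees that this presentation is well-formed, so $\cO_X(\ell)$ is Cartier and is the ample generator of $\Pic(X) \cong \ZZ$.

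Next I would compute the optimal $s$. Using the standard formula $-K_X = \cO_X\bigl(1 + \sum_{i=1}^n a_i\bigr)$ for weighted projective space, and the fact that $\Pic(X)$ has rank one with ample generator $\cO_X(\ell)$, the divisor $-K_X - s\cO_X(\ell)$ is nef if and only if
\[
s \;\leq\; \frac{1 + \sum_{i=1}^n a_i}{\ell} \;=\; \frac{1}{\ell} + \sum_{i=1}^n \frac{1}{\lambda_i}.
\]
Since the right-hand side is at least $\sum 1/\lambda_i$, its floor is at least $d(\lambda) = \lfloor \sum 1/\lambda_i \rfloor$, so we may take $s = d(\lambda)$ as an admissible value. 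Feeding this into the Hering--Schenck--Smith bound gives normal generation of $r \cO_X(\ell)$, equivalently normality of $r\Delta(\lambda)$, as soon as $r \geq n - d(\lambda)$.

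The main obstacle is checking that the hypotheses of the cited corollary are genuinely satisfied in our setting: the HSS result is phrased for general projective toric varieties (no smoothness needed), so the orbifold singularities of $\PP(1, a_1, \ldots, a_n)$ cause no problem, but one must verify that $\cO_X(\ell)$ is Cartier (ensured by well-formedness, thanks to the weight $1$) and that the HSS statement requires $-K_X - sL$ only to be \emph{nef} rather than ample. Once these points are in place, the argument is just the bookkeeping above.
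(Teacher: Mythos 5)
Your proposal follows the same overall strategy as the paper: both deduce the statement from the Hering--Schenck--Smith bound \cite[Theorem 1.1, Corollary 1.4]{HerinSchenckSmith:2006}. The difference lies in how the hypothesis of that result is verified. The paper stays on the polytope side: by (the scaled version of) Proposition \ref{proposition:lattice-point-relative-interior}, the largest integer $d$ such that $d\Delta(\lambda)$ contains no interior lattice point is exactly $d(\lambda)=\lfloor\sum_i 1/\lambda_i\rfloor$, and this is fed directly into the cited corollary. You instead pass to $X=\PP(1,a_1,\dots,a_n)$ with $a_i=\ell/\lambda_i$ and check nefness of $-K_X-s\,\cO_X(\ell)$ via $-K_X=\cO_X(1+\sum a_i)$ and the rank-one class group; numerically this amounts to the same inequality $s\le 1/\ell+\sum_i 1/\lambda_i$, so $s=d(\lambda)$ is admissible either way. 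The variety-side computation gives a cleaner adjoint-theoretic interpretation; the polytope-side computation has the advantage of matching the form in which the reference is actually used and of avoiding any discussion of Weil versus Cartier divisors.

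Three points in your write-up need care. First, $-K_X$ on a weighted projective space is in general only $\QQ$-Cartier (it is Cartier iff $\ell$ divides $1+\sum a_i$), so ``$-K_X-s\,\cO_X(\ell)$ nef'' must be read in the sense of $\QQ$-divisors, and you would have to check that the statement you attribute to \cite[Corollary 1.4]{HerinSchenckSmith:2006} is really available in that generality; the interior-lattice-point formulation used in the paper sidesteps this issue entirely. Second, the paper explicitly notes that the corollary as stated does not by itself give normality of \emph{all} multiples $r\ge n-d(\lambda)$ and patches this with \cite[Theorem 1.1]{HerinSchenckSmith:2006}; your paraphrase (``whenever $r\ge n-s$'') silently builds in this stronger form, so the same patch is needed in your argument as well. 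Third, well-formedness of $\PP(1,a_1,\dots,a_n)$ is not guaranteed merely by the presence of the weight $1$ (e.g.\ $\PP(1,2,2)$ is not well-formed); in our situation it holds because $\gcd(\ell/\lambda_1,\dots,\ell/\lambda_n)=1$, which is precisely Lemma \ref{lemma:d}. None of these points invalidates your approach, but as written the proof leans on a paraphrase of the reference rather than on its actual hypotheses.
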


\begin{proof}
    Note that $d(\lambda)$ is the maximum $d \in \ZZ$ such that $\sum_{i = 1}^n \frac{1}{\lambda_i} \geq d$. The result is then a combination of \cite[Corollary 1.4]{HerinSchenckSmith:2006} and Proposition \ref{proposition:lattice-point-relative-interior}. Note also that a slightly stronger version of \cite[Corollary 1.4]{HerinSchenckSmith:2006} is actually needed, since the result is about \emph{all} multiples larger than $(n - d(\lambda))\Delta(\lambda)$, but this version is also implied by \cite[Theorem 1.1]{HerinSchenckSmith:2006}.
\end{proof}

\begin{corollary}
    Suppose $\lambda_i \neq 1$ for all $i = 1, \ldots, n$. Then,
    \[
    \sum_{i = 1}^n \frac{1}{\lambda_i} \leq \frac{n}{\min_{i \in [n]}(\lambda_i)} \leq \frac{n}{2}.
    \]
    Furthermore, both bounds are sharp, and if $\lambda$ is such that $d(\lambda)$ is equal to either bound, then $\lambda$ is normal.
\end{corollary}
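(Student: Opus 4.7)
The plan is to handle the two inequalities by a direct coordinatewise estimate, record the equality cases for sharpness, and then observe that the hypothesis ``$d(\lambda)$ equal to either bound'' is so restrictive that $\lambda$ must be a constant tuple, whence normality follows immediately from Proposition \ref{proposition:milena-bound} applied to the standard simplex.

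For the inequalities, set $m \coloneqq \min_i \lambda_i$. Since $\lambda_i \geq m$ for each $i$, we have $1/\lambda_i \leq 1/m$, and summing yields $\sum_{i=1}^n 1/\lambda_i \leq n/m$. The bound $n/m \leq n/2$ then follows from $m \geq 2$, which uses the hypothesis $\lambda_i \in \ZZ_+ \setminus \{1\}$. Both bounds are sharp: the choice $\lambda_i = m$ for all $i$ realises the first, and additionally $m = 2$ realises the second.

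For the normality claim, the key observation is that $d(\lambda) \in \ZZ$ by definition. In either of the two cases, the chain
\[
d(\lambda) \leq \sum_{i=1}^n \frac{1}{\lambda_i} \leq \frac{n}{m} \leq \frac{n}{2}
\]
must collapse on the portion relevant to that case. For $d(\lambda) = n/m$, this gives $\sum_{i=1}^n 1/\lambda_i = n/m$, forcing $\lambda_i = m$ for all $i$; for $d(\lambda) = n/2$ the chain collapses entirely, yielding additionally $m = 2$. In either case $\Delta(\lambda) = m \Delta(1, \ldots, 1)$. Applying Proposition \ref{proposition:milena-bound} to $\Delta(1, \ldots, 1)$, for which $d(1,\ldots,1) = n$, we get that $r \Delta(1, \ldots, 1)$ is normal for every $r \geq n - n = 0$; in particular $m \Delta(1, \ldots, 1) = \Delta(\lambda)$ is normal. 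There is no real obstacle here; the only conceptual step is recognising that equality of the integer $d(\lambda)$ with an \emph{a priori} non-integer bound automatically forces the continuous inequality $\sum 1/\lambda_i \leq n/m$ to be an equality as well, pinning $\lambda$ down to a constant tuple.
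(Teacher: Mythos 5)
Your proof is correct, and since the paper states this corollary without proof it is evidently intended as exactly this kind of immediate argument: the inequalities are coordinatewise, equality of the integer $d(\lambda)$ with either bound collapses the chain $d(\lambda) \leq \sum_i 1/\lambda_i \leq n/m \leq n/2$ and forces $\lambda = (m, \ldots, m)$, and then $\Delta(\lambda)$ is normal. The only remark worth making is that the last step does not really need Proposition \ref{proposition:milena-bound}: $\Delta(m, \ldots, m) = m\Delta(1, \ldots, 1)$ is a dilate of the unimodular standard simplex, and any dilate of a normal polytope is normal, so normality is immediate; your application of the proposition to the all-ones tuple (where $d(1,\ldots,1) = n$, so every $r \geq 1$ qualifies) is nonetheless a valid use of the statement as given.
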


\begin{corollary}
    Suppose $\lambda_i \neq 1$ for all $i = 1, \ldots, n$ and $\gcd(\lambda_i, \lambda_j) = 1$ for all $i \neq j$. Then,
    \[
    d(\lambda) = O( \log\log \left ( n(\log n + \log\log n) \right )).
    \]
\end{corollary}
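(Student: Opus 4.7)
The plan is to bound $\sum_{i=1}^n 1/\lambda_i$ from above by a sum of reciprocals of primes, and then apply Mertens' theorem together with the prime number theorem. Since $d(\lambda) \leq \sum_{i=1}^n 1/\lambda_i$ by definition, this will be enough.

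First, I would argue that under the hypotheses, each $\lambda_i \geq 2$ has a smallest prime factor $q_i$, and the pairwise coprimality of the $\lambda_i$ forces the $q_i$ to be distinct primes. Therefore
\[
\sum_{i=1}^n \frac{1}{\lambda_i} \;\leq\; \sum_{i=1}^n \frac{1}{q_i} \;\leq\; \sum_{i=1}^n \frac{1}{p_i},
\]
where $p_i$ denotes the $i$-th prime; the last inequality holds because among any $n$ distinct primes, the sum of reciprocals is largest when we pick the $n$ smallest ones.

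Next I would invoke Mertens' second theorem, which gives
\[
\sum_{i=1}^n \frac{1}{p_i} \;=\; \log\log p_n + M + o(1) \;=\; O(\log\log p_n).
\]
Finally, I would apply the effective form of the prime number theorem (e.g.\ Rosser--Schoenfeld) which yields $p_n \leq n(\log n + \log\log n)$ for all $n$ sufficiently large. Substituting this into the previous estimate produces
\[
d(\lambda) \;\leq\; \sum_{i=1}^n \frac{1}{\lambda_i} \;=\; O\bigl(\log\log(n(\log n + \log\log n))\bigr),
\]
as required.

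The only genuine subtlety is the first inequality: one must be careful that replacing each $\lambda_i$ by its smallest prime factor $q_i$ only increases the sum, and that the distinctness of the $q_i$ follows from pairwise coprimality. The rest is a direct appeal to classical analytic number theory, and I would cite Mertens and Rosser--Schoenfeld rather than reproving them.
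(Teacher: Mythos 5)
Your proof is correct and follows essentially the same route as the paper: bound $\sum_i 1/\lambda_i$ by $\sum_i 1/p_i$ over the first $n$ primes (you justify this step via smallest prime factors and pairwise coprimality, which the paper leaves implicit), then apply Mertens's second theorem and the Rosser--Schoenfeld bound on $p_n$. No changes needed.
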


\begin{proof}
    Note that $\sum_{i = 1}^n \frac{1}{\lambda_i} \leq \sum_{i = 1}^n \frac{1}{p_i}$, where $p_i$ is the $i$-th prime number. The claim then follows from classical approximation theorems, namely Mertens's second theorem \cite[(2.4)]{RosserSchoenfeld:1962} and Rosser and Schoenfeld's bound on $p_n$ \cite[(2.20)]{RosserSchoenfeld:1962}.
\end{proof}

\begin{remark}
In \cite{HenkWeismantel:1997}, Henk and Weismantel give another bound for the normality index of a polytope. In the case of a simplex $\Delta = \Conv(v_0, \dots , v_n) \subset \RR^n$, their result specialises to 
\[\munorm(\Delta) \leq n - \frac{n-1}{\det(\tilde{v}_0, \dots , \tilde{v}_n)} \enspace,\]
where $\tilde{v_i} = (1,v_i)^T$.

However, for rectangular simplices with no repeated entries (without loss of generality, see Proposition \ref{proposition:repeated-weights}), this bound is equivalent the one in Theorem \ref{theorem:EW}. Indeed, if $\Delta = \Delta(\lambda_1, \dots , \lambda_n)$ with $\lambda_i \neq \lambda_j$ for $i \neq j$, then
\[\frac{n-1}{\det(\tilde{v}_0, \dots , \tilde{v}_m)} = \frac{n-1}{\lambda_1 \cdots \lambda_n} \leq \frac{n-1}{n!} < 1\enspace .\]
\end{remark}

\subsection{Periodicity}\label{subsection:periodicity} \hfill

It is also easy to extend \cite[Theorem 1.6]{BrunsGubeladze:1999} (here, Theorem \ref{theorem:periodicity}) to show that the normality index of a rectangular simplex is also periodic.

\begin{proposition}\label{proposition:periodicity-normality-indices}
    Let $\lambda = (\lambda_1, \ldots, \lambda_n) \in \ZZ_+^n$, $i = 1, \ldots, n$ and $\ell_i = \lcm(\lambda_1, \ldots, \lambda_{i - 1}, \lambda_{i + 1}, \ldots, \lambda_n)$. Then,
    \[
    \mu_{\norm}(\Delta(\lambda)) = \mu_{\norm}(\Delta(\lambda_1, \ldots, \lambda_i + \ell_i, \ldots, \lambda_n)).
    \]
\end{proposition}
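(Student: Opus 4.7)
The plan is to reduce the statement to Theorem \ref{theorem:periodicity} (the Bruns-Gubeladze periodicity for normality) by applying it not only to $\lambda$ itself but to each of its dilations $r\lambda$. The normality index is defined as the smallest $k_0$ such that $r\Delta(\lambda)$ is normal for every $r \geq k_0$, so to show $\munorm(\Delta(\lambda)) = \munorm(\Delta(\lambda'))$, where $\lambda' = (\lambda_1, \ldots, \lambda_i + \ell_i, \ldots, \lambda_n)$, it suffices to prove the stronger statement that
\[
\{r \in \ZZ_+ : r\Delta(\lambda) \text{ is normal}\} = \{r \in \ZZ_+ : r\Delta(\lambda') \text{ is normal}\}.
\]

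The first ingredient is the elementary identity $r\Delta(\mu) = \Delta(r\mu)$ for any tuple $\mu \in \ZZ_+^n$ and any $r \in \ZZ_+$, which is immediate from the definition of the rectangular simplex. The second ingredient is that the lcm commutes with multiplication by a positive integer: for any $r \in \ZZ_+$,
\[
r \cdot \ell_i = r \cdot \lcm(\lambda_1, \ldots, \widehat{\lambda_i}, \ldots, \lambda_n) = \lcm(r\lambda_1, \ldots, \widehat{r\lambda_i}, \ldots, r\lambda_n).
\]
This lets us view $r\ell_i$ as the relevant $\ell_i$-quantity for the tuple $r\lambda$.

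Fix $r \in \ZZ_+$ and apply Theorem \ref{theorem:periodicity} to the tuple $r\lambda$ in the $i$-th coordinate. It yields that $\Delta(r\lambda)$ is normal if and only if
\[
\Delta(r\lambda_1, \ldots, r\lambda_i + r\ell_i, \ldots, r\lambda_n)
\]
is normal. The right-hand tuple equals $r\lambda'$, and therefore this simplex equals $r\Delta(\lambda')$. Thus $r\Delta(\lambda)$ is normal if and only if $r\Delta(\lambda')$ is normal, for every $r \in \ZZ_+$. Taking the minimum threshold on each side yields $\munorm(\Delta(\lambda)) = \munorm(\Delta(\lambda'))$.

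There is no real obstacle here: once Theorem \ref{theorem:periodicity} is available, the only observation needed is that the operation $\lambda \mapsto (\lambda_1, \ldots, \lambda_i + \ell_i, \ldots, \lambda_n)$ is compatible with dilation, so periodicity at the level of single simplices propagates to every dilation uniformly. The same argument, applied to very ampleness instead of normality (the Bruns-Gubeladze proof of Theorem \ref{theorem:periodicity} in fact works at the level of vertex semigroups and yields both), would give the analogous statement for $\muva$ should one wish to record it.
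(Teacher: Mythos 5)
Your proof is correct and follows essentially the same route as the paper: apply Theorem \ref{theorem:periodicity} to the dilated tuple $r\lambda$, using $\lcm(r\lambda_1,\ldots,\widehat{r\lambda_i},\ldots,r\lambda_n)=r\ell_i$ and $r\Delta(\mu)=\Delta(r\mu)$, to conclude that $r\Delta(\lambda)$ is normal if and only if $r\Delta(\lambda')$ is, for every $r$. Your write-up merely spells out the identification of the resulting tuple with $r\lambda'$, which the paper leaves implicit.
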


\begin{proof}
    Let $r \in \ZZ_+$. Since $\lcm(r\lambda_1, \ldots, r\lambda_{i - 1}, r\lambda_{i + 1}, \ldots, r\lambda_n) = r\ell_i$, Theorem \ref{theorem:periodicity} implies that $r\Delta(\lambda)$ is normal if and only if $\Delta(r\lambda_1, \ldots, r\lambda_i + r\ell_i, \ldots, r\lambda_n)$ is normal.
\end{proof}

\begin{corollary}\label{corollary:infinitely-many-maximally-non-normal-characterisation}
    Let $n \in \ZZ_+$. The following are equivalent:
    \begin{enumerate}[1.]
        \item There exists a maximally non-normal rectangular $n$-simplex.
        \item There exist infinitely many maximally non-normal rectangular $n$-simplices.
    \end{enumerate}
\end{corollary}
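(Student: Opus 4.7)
The direction $2 \Rightarrow 1$ is immediate, so the plan is to prove $1 \Rightarrow 2$ by leveraging the periodicity statement of Proposition \ref{proposition:periodicity-normality-indices}. Specifically, given a single maximally non-normal rectangular $n$-simplex $\Delta(\lambda)$, I would produce an explicit infinite family by repeatedly shifting the last coordinate by $\ell_n = \lcm(\lambda_1, \ldots, \lambda_{n-1})$.

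In more detail, set $\lambda^{(k)} = (\lambda_1, \ldots, \lambda_{n-1}, \lambda_n + k\ell_n)$ for $k \in \NN$. Applying Proposition \ref{proposition:periodicity-normality-indices} $k$ times with $i = n$ (noting that the $\ell_n$ appearing at each step is unchanged, since the first $n-1$ entries never move), I get
\[
\mu_{\norm}(\Delta(\lambda^{(k)})) = \mu_{\norm}(\Delta(\lambda)) = n - 1
\]
for every $k$, so each $\Delta(\lambda^{(k)})$ is maximally non-normal. Since the $n$-th entries $\lambda_n + k\ell_n$ are pairwise distinct as $k$ varies, the simplices $\Delta(\lambda^{(k)})$ are pairwise distinct, yielding infinitely many.

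There is essentially no hard step: the only thing to verify is that iterating the one-step periodicity does not change the relevant $\ell_n$, which is clear since the first $n - 1$ coordinates are fixed throughout the process. The whole proof then fits in a couple of lines, and I would present it exactly in this way.
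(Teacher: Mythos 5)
Your proof is correct and is exactly the argument the paper intends: the corollary is stated as an immediate consequence of Proposition \ref{proposition:periodicity-normality-indices}, obtained by repeatedly shifting one entry by the fixed $\ell_n$, just as you do. Nothing further is needed.
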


Since divisibility conditions play a role in the normality of a rectangular simplex $\Delta(\lambda)$ (see for example \cite[Proposition 2.1, Proposition 2.4]{BrunsGubeladze:1999}), it is natural to consider the case in which all entries of $\lambda$ are pairwise coprime. The following proposition and corollary give the simplest family of examples that is interesting.

\begin{proposition}
    The rectangular simplex $\Delta(2, 3, p)$, where $p \geq 5$ is a prime, is maximally non-normal if and only if $p \equiv -1 \mod 3$.
\end{proposition}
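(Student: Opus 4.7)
The plan is to use the periodicity of the normality index (Proposition~\ref{proposition:periodicity-normality-indices}) to collapse the infinite family of primes down to two representative cases modulo $6$. For a rectangular $3$-simplex, being \emph{maximally non-normal} simply means being non-normal, since Ewald-Wessels (Theorem~\ref{theorem:EW}) already gives $\mu_{\norm} \leq n-1 = 2$; hence it suffices to characterise when $\Delta(2,3,p)$ is non-normal. Applying Proposition~\ref{proposition:periodicity-normality-indices} with $i = 3$ (so $\ell_3 = \lcm(2,3) = 6$), the normality of $\Delta(2,3,p)$ depends only on $p$ modulo $6$, and for a prime $p \geq 5$ this residue lies in $\{1,5\}$, corresponding to $p \equiv 1$ and $p \equiv -1 \pmod 3$ respectively.

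For $p \equiv 1 \pmod 6$, I would apply periodicity once more to reduce to the degenerate value $p = 1$, obtaining $\mu_{\norm}(\Delta(2,3,p)) = \mu_{\norm}(\Delta(2,3,1))$. By Proposition~\ref{proposition:normality-extensions} this equals $\mu_{\norm}(\Delta(2,3))$, and since every two-dimensional lattice polytope is normal (a classical fact, see e.g.\ \cite[Theorem~2.2.12]{CoxLittleSchenck:2011}), $\Delta(2,3,p)$ is normal as required.

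For $p \equiv -1 \pmod 3$, equivalently $p \equiv 5 \pmod 6$, periodicity reduces the claim to the non-normality of $\Delta(2,3,5)$, which is Ogata's counterexample recalled in Example~\ref{example:Ogata-counterexample}. For a self-contained verification that covers an arbitrary $p = 6q + 5$, one can instead exhibit $v = (1, 2, 5q + 4)$ as a lattice point of $2\Delta(2,3,p)$ which is not the sum of two lattice points of $\Delta(2,3,p)$: the membership $v \in 2\Delta$ reduces to the trivial inequality $48 \leq 50$, and a case split on the pair $(a_2, b_2) \in \{(0,2),(1,1),(2,0)\}$ combined with the defining inequality of $\Delta$ applied to each summand caps the third-coordinate sum at $\lfloor p/2 \rfloor + \lfloor p/3 \rfloor = \lfloor 2p/3 \rfloor + \lfloor p/6 \rfloor = 5q + 3 < 5q + 4$.

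The only real obstacle is this case analysis (or alternatively Ogata's original calculation), since the positive direction collapses to a two-dimensional statement through the periodicity reduction. The conceptual heart of the argument is therefore Proposition~\ref{proposition:periodicity-normality-indices}, together with the observation that the representative $p = 1$ in the class $1 \pmod 6$ makes the rectangular $3$-simplex degenerate to a $2$-simplex.
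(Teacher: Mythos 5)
Your proof is correct and follows essentially the same route as the paper: reduce modulo $\ell_3=\lcm(2,3)=6$ via periodicity, then settle the two residue classes by the base cases $\Delta(2,3,1)$ (normal) and $\Delta(2,3,5)$ (non-normal), using that for $n=3$ maximal non-normality is just non-normality by Theorem~\ref{theorem:EW}. The extra detail you supply (the extension argument for $\Delta(2,3,1)$ and the explicit witness $(1,2,5q+4)$ for $p=6q+5$) just fills in what the paper dismisses as ``easy to check,'' and is sound apart from the cosmetic fact that clearing denominators in the membership check gives an inequality such as $24\le 25$ rather than $48\le 50$.
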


\begin{proof}
    It is easy to check that $(2, 3, 1)$ is normal and $(2, 3, 5)$ is not. By Theorem \ref{theorem:periodicity}, $(2, 3, 1 + 6a), a \geq 1$ is normal and $(2, 3, 5 + 6a), a \geq 1$ is not. Since primes are equivalent to $1$ or $5$ modulo 6, the claim follows.
\end{proof}

\begin{corollary}\label{corollary:2-3-p}
    Let $p\geq 5$ be a prime number and $\ell = \lcm(2,3,p) = 6p$.
    The line bundle $\cO(\ell)$ on weighted projective space $\PP(1,6,2p,3p)$ is very ample if and only if $p \equiv 1 \mod 3$.
\end{corollary}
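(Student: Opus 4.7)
The plan is to reduce the question about the weighted projective space to one about the rectangular simplex $\Delta(2,3,p)$, and then combine two results already established in the paper: the preceding proposition (on maximal non-normality of $\Delta(2,3,p)$), and the fact (due to Ogata, and recalled in Subsection~\ref{subsection:weighted-projective-spaces}) that for a $3$-dimensional simplex, very ampleness is equivalent to normality.

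First, I would apply Lemma~\ref{lemma:projection-to-rec-simplices} to the weight vector $\underline{a}=(1,6,2p,3p)$ with $l=\lcm(1,6,2p,3p)=6p$. The $l_i$ values are $l_1=6p/6=p$, $l_2=6p/(2p)=3$, $l_3=6p/(3p)=2$, so the section polytope $\Delta_{\underline{a},l}$ projects to the rectangular simplex $\Delta(p,3,2)=\Delta(2,3,p)$. By Proposition~\ref{proposition:toric-polytope-correspondence}(1), $\cO_{\PP}(\ell)$ is very ample if and only if $\Delta(2,3,p)$ is very ample.

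Next I would invoke Ogata's theorem for $3$-dimensional simplices (quoted just after Example~\ref{example:Ogata-counterexample}): very ampleness coincides with normality. Thus $\cO(\ell)$ is very ample if and only if $\Delta(2,3,p)$ is normal. Since $n=3$, Theorem~\ref{theorem:EW} forces $\mu_{\norm}(\Delta(2,3,p))\in\{1,2\}$, so ``not normal'' is the same as ``maximally non-normal''. The preceding proposition characterises this: $\Delta(2,3,p)$ is non-normal precisely when $p\equiv -1 \pmod 3$. For a prime $p\geq 5$, the only remaining residue class modulo $3$ is $p\equiv 1\pmod 3$, and in that case $\Delta(2,3,p)$ is normal, hence very ample, hence $\cO(\ell)$ is very ample.

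There is no substantive obstacle here, the argument is essentially a chaining of existing results; the only thing to be careful about is the bookkeeping of the reduction from $\PP(1,6,2p,3p)$ with the polarisation $\cO(6p)$ down to the simplex $\Delta(2,3,p)$, making sure that $l=\lcm(1,6,2p,3p)$ really equals $6p$ (which uses $\gcd(p,6)=1$ for $p\geq 5$ prime) and that the reordered triple $(l_1,l_2,l_3)=(p,3,2)$ genuinely matches the hypothesis of the preceding proposition up to the symmetry of $\Delta(\lambda)$ under permutations of $\lambda$.
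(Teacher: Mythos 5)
Your proposal is correct and follows the route the paper intends for this corollary: translate $(\PP(1,6,2p,3p),\cO(6p))$ to the rectangular simplex $\Delta(2,3,p)$ via Lemma~\ref{lemma:projection-to-rec-simplices} and Proposition~\ref{proposition:toric-polytope-correspondence}, invoke Ogata's equivalence of very ampleness and normality for $3$-dimensional simplices, and apply the preceding proposition together with the Ewald--Wessels bound to identify non-normality with maximal non-normality, i.e.\ with $p\equiv -1 \pmod 3$. Your bookkeeping of $l=6p$ and $(l_1,l_2,l_3)=(p,3,2)$ is exactly the verification needed, so nothing is missing.
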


Proposition \ref{proposition:periodicity-normality-indices} also implies a more refined version of Corollary \ref{corollary:infinitely-many-maximally-non-normal-characterisation} if all entries of $\lambda$ are pairwise coprime.

\begin{corollary}\label{corollary:infinitely-many-maximally-non-normal-characterisation-coprime}
    Let $n \in \ZZ_+$. The following are equivalent:
    \begin{enumerate}[1.]
        \item There exists a maximally non-normal rectangular $n$-simplex $\Delta(\lambda)$ such that the entries of $\lambda$ are pairwise coprime.
        \item There exist infinitely many maximally non-normal rectangular $n$-simplices $\Delta(\lambda)$ such that the entries of $\lambda$ are prime.
    \end{enumerate}
\end{corollary}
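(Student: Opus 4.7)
The implication $2. \Rightarrow 1.$ is immediate: pick any simplex from the infinite family; its entries are pairwise distinct by Corollary \ref{corollary:max-muva-distinct-weights}, and distinct primes are pairwise coprime.

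For $1. \Rightarrow 2.$ the plan is to combine Proposition \ref{proposition:periodicity-normality-indices} with Dirichlet's theorem on primes in arithmetic progressions. The key observation is that when $\lambda_1, \ldots, \lambda_n$ are pairwise coprime, one has $\ell_i = \prod_{j \neq i}\lambda_j$, so $\gcd(\lambda_i, \ell_i) = 1$ for each $i$. This is exactly the hypothesis needed to apply Dirichlet to the arithmetic progression $\lambda_i + k\ell_i$. Moreover, iterating Proposition \ref{proposition:periodicity-normality-indices} (noting that $\ell_i$ does not change under replacements in the $i$-th slot) shows that we may replace $\lambda_i$ by any positive integer in that progression without altering the normality index.

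I would then replace the entries by primes one at a time. Starting from the given $\Delta(\lambda_1, \ldots, \lambda_n)$, inductively maintain a maximally non-normal tuple $(\mu_1, \ldots, \mu_{i-1}, \lambda_i, \ldots, \lambda_n)$ with pairwise coprime entries and with $\mu_1, \ldots, \mu_{i-1}$ distinct primes. To pass from step $i-1$ to step $i$, Dirichlet supplies infinitely many primes $\mu_i$ congruent to $\lambda_i$ modulo $\mu_1 \cdots \mu_{i-1} \cdot \lambda_{i+1} \cdots \lambda_n$; choosing one with $\mu_i$ strictly larger than every other current entry guarantees it is distinct from, and hence coprime to, each of them, while Proposition \ref{proposition:periodicity-normality-indices} preserves maximal non-normality. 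After $n$ steps I obtain a maximally non-normal $\Delta(\mu_1, \ldots, \mu_n)$ with distinct prime entries.

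To upgrade from one such example to infinitely many, I fix $(\mu_1, \ldots, \mu_n)$ and vary only the first coordinate: Dirichlet provides infinitely many primes $p \equiv \mu_1 \pmod{\mu_2 \cdots \mu_n}$, and for each $p > \mu_n$ the simplex $\Delta(p, \mu_2, \ldots, \mu_n)$ has distinct prime entries and is maximally non-normal by Proposition \ref{proposition:periodicity-normality-indices}. There is no serious obstacle in this plan; the only point requiring mild care is to keep each newly chosen prime larger than every previously fixed entry, so that pairwise coprimality, and hence the hypothesis of Dirichlet's theorem at the next step, is maintained automatically.
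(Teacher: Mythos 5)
Your proof is correct and follows essentially the same route as the paper: the paper's proof consists precisely of dismissing $2.\Rightarrow 1.$ as trivial and, for $1.\Rightarrow 2.$, applying Proposition \ref{proposition:periodicity-normality-indices} together with Dirichlet's theorem repeatedly, which is exactly your slot-by-slot replacement argument with the details (pairwise coprimality maintained by choosing each new prime larger than the current entries) spelled out.
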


\begin{proof}
    2. $\implies$ 1. is trivial. For the other direction, apply Proposition \ref{proposition:periodicity-normality-indices} and Dirichlet's theorem on arithmetic progressions repeatedly.
\end{proof}

The question of whether either condition in Corollary \ref{corollary:infinitely-many-maximally-non-normal-characterisation} or \ref{corollary:infinitely-many-maximally-non-normal-characterisation-coprime} is satisfied will be answered affirmatively for all $n \in \ZZ_+$ in Section \ref{section:maximally-non-normal}.

\section{Maximally non-normal rectangular simplices}\label{section:maximally-non-normal}

Recall that a rectangular $n$-simplex $\Delta(\lambda)$ is \emph{maximally non-normal} if $\mu_{\norm}(\Delta(\lambda)) = n - 1$. In this section we give a simple algorithm that outputs, for a given $n \in \ZZ_+$, an arbitrary number of maximally non-normal rectangular $n$-simplices with distinct prime entries. To the best of our knowledge, finding even one example with unrestricted entries was previously open for high values of $n$. We rely on the notion of almost 1-normality \cite{BrunsGubeladze:1999} (here, Definition \ref{definition:a1n}).

In the following lemma, 4. $\implies$ 3. $\implies$ 2. $\iff$ 1. Furthermore, 3. is very easy to implement in a computer as a test for non-normality, and 4. will give us the main result of this section. Note also that 4. does not depend on $r \in [n - 2]$ and that 3. and 4. imply bounds on $\mu_{\mnorm}$ and $\mu_{\norm}$.

\begin{lemma}\label{lemma:non-normal-dilations-criteria}
    Let $n \geq 3$ and $r \in [n - 2]$. Let $\Delta(\lambda) = \Delta(p_1, \ldots, p_n)$ be a rectangular simplex with distinct prime entries. For $i \in [n]$, define $b_i$ to be the smallest non-negative integer equivalent to $-(\prod_{j \in [n], j \neq i}~p_j)^{-1}$ modulo $p_i$. If any of the following conditions hold, then $r\Delta(\lambda)$ is not normal:
    \begin{enumerate}[1.]
        \item $r\Delta(\lambda)$ is not almost 1-normal.
        \item The equation
        \begin{equation}\label{eq:1}
            r\left ( \prod_{i \in [n]} p_i \right ) - 1 = \sum_{i \in [n]} a_i \prod_{j \in [n], j \neq i} p_j
        \end{equation}
        has no solution $(a_1, \ldots, a_n) \in \NN^n$.
        \item There exists $k \in [n]$ such that
        \[
        \sum_{i \in [n], i \neq k} \frac{b_i}{p_i} \geq r,
        \]
        \item There exists $k \in [n]$ such that $b_i \equiv -1 \mod p_i$ for all $i \in [n], i \neq k$ and $\sum_{i \in [n], i \neq k} 1/p_i \leq 1$.
    \end{enumerate}
\end{lemma}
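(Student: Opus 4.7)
The plan is to prove the chain $4 \Rightarrow 3 \Rightarrow 2 \iff 1$, and then invoke $\mathrm{N}\Rightarrow\mathrm{A1N}$ from Figure~\ref{fig:implications} to conclude that each condition forces $r\Delta(\lambda)$ to be non-normal.

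First I would set up Definition~\ref{definition:a1n} for the dilation $r\Delta(\lambda)$. Using $\lcm(rx_1,\dots,rx_n) = r\,\lcm(x_1,\dots,x_n)$, the lcm of the entries of $r\lambda$ is $L = r\prod_{i\in[n]} p_i$, so $L_i = L/(rp_i) = \prod_{j\neq i} p_j$. Because $p_k$ is absent from $L_k$ and the $L_i$ involve no other primes, one gets $d = \gcd(L_1,\dots,L_n) = 1$. Thus almost 1-normality of $r\Delta(\lambda)$ is precisely the assertion that $L - 1 = r\prod p_i - 1$ lies in the submonoid of $\NN$ generated by $L_1,\dots,L_n$, i.e., that (\ref{eq:1}) is solvable. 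This gives $1\iff 2$, and combined with $\mathrm{N}\Rightarrow\mathrm{A1N}$, it yields that $1$ implies $r\Delta(\lambda)$ is not normal.

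For $3 \Rightarrow 2$, the numbers $b_i$ enter via modular reduction. If (\ref{eq:1}) had a solution $(a_1,\dots,a_n) \in \NN^n$, reducing both sides modulo $p_i$ would kill every term on the right-hand side except the $i$-th, producing $a_i \prod_{j\neq i} p_j \equiv -1 \pmod{p_i}$; by the definition of $b_i$ this forces $a_i \equiv b_i \pmod{p_i}$, and hence $a_i \geq b_i$ since $b_i$ is the minimal non-negative representative. Dividing (\ref{eq:1}) by $\prod p_j$ yields $\sum_i a_i/p_i = r - 1/\prod p_j < r$. Thus for any $k\in[n]$, since $a_k \geq 0$, we would obtain $\sum_{i\neq k} b_i/p_i \leq \sum_{i\neq k} a_i/p_i < r$, contradicting hypothesis 3.

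Finally, $4 \Rightarrow 3$ is a short calculation: $b_i \equiv -1 \pmod{p_i}$ together with $0 \leq b_i \leq p_i-1$ forces $b_i = p_i - 1$, so $\sum_{i\neq k} b_i/p_i = (n-1) - \sum_{i\neq k} 1/p_i \geq n - 2 \geq r$, where the last inequality uses $r \in [n-2]$. The only slightly delicate point I foresee is verifying that $L_i = \prod_{j\neq i} p_j$ and $d = 1$ hold uniformly even when $r$ shares a prime factor with some $p_i$; but these identities depend only on $L = r\prod p_i$ and on the $p_i$ being distinct primes, so the argument applies without modification.
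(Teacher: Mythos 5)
Your proposal is correct and follows essentially the same route as the paper: item 1 via the Bruns--Gubeladze chain normal $\Rightarrow$ 1-normal $\Rightarrow$ A1N, item 2 as the unpacked A1N definition with $L = r\prod p_i$, $L_i = \prod_{j\neq i}p_j$, $d=1$, item 3 by reducing \eqref{eq:1} modulo each $p_i$ to force $a_i \equiv b_i$, and item 4 by the computation $b_i = p_i - 1$. The only (harmless) difference is cosmetic: in step 3 you divide \eqref{eq:1} by $\prod_j p_j$ to get $\sum_i a_i/p_i < r$ directly, whereas the paper solves for $a_k$ and bounds it, but the underlying estimate is identical.
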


\begin{proof}
    \begin{enumerate}[1.]
        \item See \cite{BrunsGubeladze:1999}, where the authors show that normality implies 1-normality, which in turn implies almost 1-normality.
        \item This is Definition \ref{definition:a1n} with $L = r\prod_{i \in [n]} p_i, L_i = L/(rp_i), d = 1$.
        \item If $r\Delta(\lambda)$ is normal and $k \in [n]$, then, by 1., \eqref{eq:1} has a solution. For all $i \in [n]$,
        \[
        a_i \prod_{j \in [n], j \neq i} p_j \equiv -1 \mod p_i \iff a_i \equiv b_i, \mod p_i,
        \]
        so
        \[
        a_i = b_i + q_ip_i.
        \]
        Substituting back into \eqref{eq:1} and solving for $a_k$:
        \begin{align*}
            a_k &= \frac{r\left ( \prod_{i \in [n]} p_i \right ) - 1 - \sum_{i \in [n], i \neq k} (b_i + q_ip_i) \prod_{j \in [n], j \neq i} p_j}{\prod_{j \in [n], j \neq k} p_j} \\
            &= rp_k - \left (\sum_{i \in [n], i \neq k}q_ip_k \right ) - \frac{1 + \sum_{i \in [n], i \neq k} b_i \prod_{j \in [n], j \neq i} p_j}{\prod_{j \in [n], j \neq k} p_j} \\
            &= p_k\left(r - \left (\sum_{i \in [n], i \neq k}q_i \right )\right) - \frac{1 + \sum_{i \in [n], i \neq k} b_i \prod_{j \in [n], j \neq i} p_j}{\prod_{j \in [n], j \neq k} p_j}
        \end{align*}
        Since $a_k \geq 0$,
        \begin{align*}
            rp_k &\geq p_k\left(r - \left (\sum_{i \in [n], i \neq k}q_i \right )\right) \\
            &\geq\frac{1 + \sum_{i \in [n], i \neq k} b_i \prod_{j \in [n], j \neq i} p_j}{\prod_{j \in [n], j \neq k} p_j} \\
            &> \frac{\sum_{i \in [n], i \neq k} b_i \prod_{j \in [n], j \neq i} p_j}{\prod_{j \in [n], j \neq k} p_j},
        \end{align*}
        so
        \[
        r > \sum_{i \in [n], i \neq k} \frac{b_i}{p_i}.
        \]
        The contrapositive of the claim follows.
        \item \[
        \sum_{i \in [n], i \neq k} \frac{b_i}{p_i} = \sum_{i \in [n], i \neq k} \frac{p_i - 1}{p_i} = n - 1 - \sum_{i \in [n], i \neq k} \frac{1}{p_i} \geq n - 2 \geq r.
        \]
        Apply 3.
    \end{enumerate}
\end{proof}

\begin{remark}
    By the proof of Proposition \ref{proposition:lattice-point-relative-interior}, the second condition in Lemma \ref{lemma:non-normal-dilations-criteria} 4. is equivalent to $(1, \stackrel{n - 1 \text{ times}}{\ldots}, 1) \in \Delta(p_1, \ldots, p_{k - 1}, p_{k + 1}, \ldots, p_n)$. It is interesting to note that this is then logically consistent with Proposition \ref{proposition:milena-bound}.
\end{remark}

The following theorem is the main result of this section. Note that the condition $\sum_{i \in [n - 1]}~1/p_i~\leq~1$ is almost always satisfied.

\begin{theorem}\label{theorem:infinitely-many-maximally-non-normal}
    Let $n \geq 3$ and $P = \{p_1, \ldots, p_{n - 1}\}$ be distinct primes such that $\sum_{i \in [n - 1]}~1/p_i~\leq~1$. Then, there exist infinitely many maximally non-normal rectangular simplices of the form $\Delta(p_1, \ldots, p_{n - 1}, p_n)$ with $p_n \notin P$ prime. Furthermore, for $N \in \ZZ_+$, there exists an algorithm (Algorithm \ref{alg:1}) that outputs $N$ such simplices in finite time.
\end{theorem}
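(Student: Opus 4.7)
The plan is to apply Lemma \ref{lemma:non-normal-dilations-criteria}(4) with $k = n$, translate its hypotheses into congruence conditions on the last prime $p_n$, and then invoke Dirichlet's theorem on primes in arithmetic progressions to produce infinitely many admissible $p_n$.

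By Ewald--Wessels (Theorem \ref{theorem:EW}), $r\Delta(\lambda)$ is normal for every $r \geq n-1$, so $\mu_{\norm}(\Delta(\lambda)) \leq n-1$ automatically. Thus to obtain maximal non-normality it suffices to force $(n-2)\Delta(\lambda)$ to be non-normal, and Lemma \ref{lemma:non-normal-dilations-criteria}(4) supplies a criterion that actually guarantees non-normality of $r\Delta(\lambda)$ for every $r \in [n-2]$ simultaneously. Taking $k = n$ in that lemma, the hypothesis $\sum_{i \in [n-1]} 1/p_i \leq 1$ is exactly what is assumed in the theorem; what remains is to arrange $b_i \equiv -1 \mod p_i$ for every $i \in [n-1]$.

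Unwinding the definition of $b_i$, the condition $b_i \equiv -1 \mod p_i$ is equivalent to $\prod_{j \in [n],\, j \neq i} p_j \equiv 1 \mod p_i$, and isolating the factor $p_n$ turns this into the linear congruence
\[
p_n \equiv \Big(\prod_{j \in [n-1],\, j \neq i} p_j\Big)^{-1} \mod p_i.
\]
Each right-hand side is a nonzero residue modulo $p_i$, so by the Chinese Remainder Theorem the $n-1$ congruences amalgamate into a single congruence $p_n \equiv c \mod (p_1 \cdots p_{n-1})$ with $\gcd(c, p_1 \cdots p_{n-1}) = 1$. Dirichlet's theorem then supplies infinitely many primes $p_n$ in this arithmetic progression; all but finitely many lie outside $P$, and each such $p_n$ makes $\Delta(p_1, \ldots, p_{n-1}, p_n)$ maximally non-normal.

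The algorithm is then immediate: compute $c$ once via CRT, then enumerate $c, c + p_1 \cdots p_{n-1}, c + 2 p_1 \cdots p_{n-1}, \ldots$, applying a primality test to each term and discarding those in $P$, until $N$ valid simplices have been recorded; termination in finite time is exactly the infinitude statement furnished by Dirichlet. The only genuinely delicate point in the proof is the coprimality $\gcd(c, p_1 \cdots p_{n-1}) = 1$ required to invoke Dirichlet, but this is automatic because each CRT component is an inverse, hence a unit, modulo the corresponding $p_i$.
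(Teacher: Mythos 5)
Your proposal is correct and follows essentially the same route as the paper: invoke Lemma \ref{lemma:non-normal-dilations-criteria}(4) with $k = n$, rewrite $b_i \equiv -1 \bmod p_i$ as the congruences $p_n \equiv \bigl(\prod_{j \in [n-1],\, j \neq i} p_j\bigr)^{-1} \bmod p_i$, combine them by the Chinese Remainder Theorem into a residue class coprime to $p_1 \cdots p_{n-1}$, and apply Dirichlet's theorem, with the algorithm enumerating that arithmetic progression exactly as in Algorithm \ref{alg:1}. Your explicit remark that Ewald--Wessels supplies the matching upper bound $\mu_{\norm} \leq n-1$ is a point the paper leaves implicit, but it is the same argument.
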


\begin{proof}
    By Lemma \ref{lemma:non-normal-dilations-criteria} 4., it is enough to find $p_n$ such that
    \[
    -1 \equiv-\left(\prod_{j \in [n], j \neq i} p_j\right)^{-1} \mod{p_i}, \quad \forall i \in [n - 1]
    \]
    and $\sum_{i \in [n - 1]} 1/p_i \leq 1$.
    Solving for $p_n$,
    \[
    p_n \equiv \alpha_i \coloneqq \left(\prod_{j \in [n - 1], j \neq i} p_j\right)^{-1} \mod{p_i}, \quad \forall i \in [n-1].
    \]
    By the Chinese Remainder Theorem, the system
    \begin{equation}\label{eq:2}
        x \equiv \alpha_i \mod{p_i}, \quad \forall i \in [n - 1]
    \end{equation}
    has a unique solution in $\ZZ/(\prod_{j \in [n - 1]}p_j)\ZZ$ lifting to a minimal $x_0 \in \NN$. The hypotheses imply that $x_0 \equiv \alpha_i \not\equiv 0 \mod{p_i}$ for all $i \in [n - 1]$, so $x_0$ and $\prod_{j \in [n - 1]}p_j$ are coprime. By Dirichlet's theorem on arithmetic progressions, the system \eqref{eq:2} has infinitely many prime solutions.
\end{proof}

\begin{algorithm}
\caption{Find Primes Defining Maximally Non-Normal Rectangular Simplices}\label{alg:1}
\begin{algorithmic}[1]
\Require A list $P$ of $n - 1$ distinct prime numbers such that $\sum_{q \in P} 1/q \leq 1$, a positive integer $N$
\Ensure A list $P'$ of $N$ distinct prime numbers such that, for each $p \in P'$, $P \cup \{p\}$ defines a maximally non-normal rectangular $n$-simplex

\State $L \gets \prod_{q \in P}q$
\State $\alpha \gets \emptyset$
\For{$q \in P$}
    \State Append inverse of $L/q$ in $\mathbb{F}_q$ to $\alpha$
\EndFor
\State $p \gets$ the solution to the system $x \equiv \alpha \mod P$
\State $P' \gets \emptyset$
\While{$\#P' < N$}
    \If{$p$ is prime}
        \State Append $p$ to $P'$
    \EndIf
    \State $p \gets p + L$
\EndWhile
\State \Return $P'$
\end{algorithmic}
\end{algorithm}

\begin{remark}
    Assuming the Generalised Riemann Hypothesis \cite{Tao:2015}, the time complexity of Algorithm \ref{alg:1} is $O(NL^2\log^2L)$, where $L = \lcm(p_1, \ldots, p_{n - 1})$.
\end{remark}

\begin{corollary}\label{corollary:2-5-7-11-619}
    The simplex $\Delta(\lambda) = \Delta(2, 5, 7, 11, 619)$ has maximal normality index.
\end{corollary}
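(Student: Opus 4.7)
The plan is to combine the Ewald--Wessels upper bound with a targeted application of Lemma~\ref{lemma:non-normal-dilations-criteria}(4). Since $n=5$, Theorem~\ref{theorem:EW} immediately gives $\munorm(\Delta(\lambda)) \leq n - 1 = 4$, so it suffices to exhibit a dilation $r\Delta(\lambda)$ with $r \leq n-2 = 3$ that fails to be normal; this will force $\munorm(\Delta(\lambda)) \geq 4$ and hence equality.

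The strategy is to invoke Lemma~\ref{lemma:non-normal-dilations-criteria}(4) with $k = 5$, treating $619$ as the distinguished prime. Two conditions need to be verified. First, $\sum_{i \neq 5} 1/p_i = 1/2 + 1/5 + 1/7 + 1/11 = 719/770 < 1$, which is immediate. Second, for each $i \in \{1, 2, 3, 4\}$, the integer $b_i$ defined in the lemma must equal $p_i - 1$; equivalently, $\prod_{j \neq i} p_j \equiv 1 \pmod{p_i}$ for $i = 1, 2, 3, 4$. This is precisely the congruence condition extracted in the proof of Theorem~\ref{theorem:infinitely-many-maximally-non-normal}, so the task reduces to checking that $619$ is an admissible output of Algorithm~\ref{alg:1} applied to $P = \{2, 5, 7, 11\}$.

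Concretely, I would compute $\alpha_i \equiv (\prod_{j \in [4],\, j \neq i} p_j)^{-1} \pmod{p_i}$ for each $i \in [4]$, solve the resulting CRT system modulo $L = 2 \cdot 5 \cdot 7 \cdot 11 = 770$ to identify the unique admissible residue class, and verify by direct reduction of $619$ modulo each of $2, 5, 7, 11$ that $619$ falls in that class. A separate trial-division check up to $\lfloor \sqrt{619} \rfloor = 24$ confirms that $619$ is prime (so that the hypotheses of Lemma~\ref{lemma:non-normal-dilations-criteria} are in force). With both conditions satisfied, the lemma delivers non-normality of $r\Delta(\lambda)$ for every $r \in \{1, 2, 3\}$; in particular $3\Delta(\lambda)$ is not normal, completing the proof. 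There is no real obstacle here: the conceptual content was absorbed into Theorem~\ref{theorem:infinitely-many-maximally-non-normal}, and this corollary merely witnesses that its existential claim has the small concrete instance $p_n = 619$ over the base $P = \{2, 5, 7, 11\}$. The only place to be careful is the sign convention in the definition of $b_i$, which resolves via the identity $b_i \equiv -1 \pmod{p_i} \iff \prod_{j \neq i} p_j \equiv 1 \pmod{p_i}$.
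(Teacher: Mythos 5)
Your proposal is correct and follows essentially the same route as the paper: the paper's proof simply cites Algorithm~\ref{alg:1} with $P=\{2,5,7,11\}$, $N=1$ (plus a \emph{Polymake} sanity check on $3\Delta(\lambda)$), which is exactly the Lemma~\ref{lemma:non-normal-dilations-criteria}(4)/CRT criterion you unwind explicitly, combined with the Ewald--Wessels bound of Theorem~\ref{theorem:EW} for the upper estimate. The arithmetic you defer does check out: $619 \equiv 1,\,4,\,3,\,3 \pmod{2,5,7,11}$ are precisely the residues $\alpha_i$, $619$ is prime, and $1/2+1/5+1/7+1/11 = 719/770 \leq 1$, so $3\Delta(\lambda)$ is not normal and $\munorm(\Delta(\lambda))=4$.
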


\begin{proof}
    This follows from Algorithm \ref{alg:1} with $P = \{2, 5, 7, 11\}$ and $N = 1$, and can easily be verified using \emph{Polymake} (check for non-very-ampleness of $3\Delta(\lambda)$). 
\end{proof}

\section{Hypergraphs and the Frobenius problem}\label{section:hypergraphs}

In this section we introduce a very natural correspondence between hypergraphs and rectangular simplices that encodes the incidence structure of primes and entries of the rectangular simplex. We use it to give purely combinatorial criteria for the properties we have discussed to far and to prove almost 1-normality in many cases. We also prove that the $\LPE(n)$ property implies almost 1-normality.

\begin{definition}
    \begin{enumerate}[1.]
        \item A \textbf{family of sets} or \textbf{hypergraph} $G$ is a pair $(V, E)$ where $V$ is a non-empty, finite set and $E$ is a finite sequence of non-empty subsets of $V$ (i. e., a map $E: I \to \mathcal{P}(V) \setminus \emptyset,~~0\leq \#I < \infty$). The elements of $V$ and $E$ are called \textbf{vertices} and \textbf{edges}, respectively.
        \item Let $k \in \ZZ_+$. A hypergraph $G$ is called \textbf{$\mathbf{k}$-uniform} if every edge contains exactly $k$ vertices.
        \item The \textbf{$\mathbf{2}$-section} of a hypergraph $G = (V, E)$ is the graph $G' = (V', E')$ defined by:
        \begin{enumerate}[i.]
            \item $V' = V$.
            \item For all $v, w \in V$, $e' = \{v, w\} \in E'$ if and only if there exists $e \in E$ such that $\{v, w\} \subseteq e$.
        \end{enumerate}
        \item Let $G = (V, E)$ be a hypergraph. The \textbf{restriction} of $G$ to $V' \subset V$ is the hypergraph $G_{V'} = (V', (e \cap V': e \in E, e \cap V' \neq \emptyset))$.
        \item Let $G = (V, E: I \to \mathcal{P}(V))$ be a hypergraph. The \textbf{partial hypergraph} generated by $I' \subseteq I$ is the hypergraph $G' = (V, E|_{I'})$.
    \end{enumerate}
\end{definition}

\begin{remark}
    The previous definitions are standard. The order on $E$ is usually irrelevant (so it could also be defined as a \emph{multiset}), but since we want to label its elements, it is simpler to define it as a sequence and then quotient by an appropriate equivalence relation to get uniqueness later on. The hypergraphs we have defined are sometimes referred to as \emph{undirected}, \emph{finite} ($V$ is finite) and \emph{non-simple} or \emph{multiple} ($E$ may contain edges with only one vertex or repeated edges).
\end{remark}

\begin{definition}
    Let $G = (V, E: I \to \mathcal{P}(V))$ be a hypergraph, let $S \subseteq \ZZ_+$. An \textbf{$\mathbf{S}$-weighting} $W$ on the edges of $G$ is a map $W: I \to S$. The \textbf{total edge weight} $W(G)$ of $G$ is the product $\prod_{i \in I} W(E(i))$.
\end{definition}

\begin{lemma}\label{lemma:equivalence-relation}
    Let $V$ be a non-empty, finite set and $S \subset \ZZ_+$. Let $G_1 = (V, E_1: I_1 \to \mathcal{P}(V)), W_1: I_1 \to S$ and $G_2 = (V, E_2: I_2 \to \mathcal{P}(V)), W_2: I_2 \to S$ be two $S$-weighted hypergraphs. Define a relation $\sim_{V, S}$ by declaring that $G_1, W_1$ and $G_2, W_2$ are related if and only if there exists a bijection $f: I_1 \to I_2$ that is compatible with the edge and weighting maps (i. e. $E_1 = E_2 \circ f$ and $W_1 = W_2 \circ f$). Then, $\sim_{V, S}$ is an equivalence relation.
\end{lemma}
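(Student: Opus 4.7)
The plan is to verify the three defining properties of an equivalence relation (reflexivity, symmetry, and transitivity) by exhibiting, in each case, a bijection between the appropriate index sets that is simultaneously compatible with the edge maps and the weighting maps. Since the relation is defined by the existence of such a bijection, each verification reduces to producing a concrete candidate and checking the two compatibility equations $E = E' \circ f$ and $W = W' \circ f$.

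For reflexivity of $(G_1, W_1) \sim_{V,S} (G_1, W_1)$, I would take $f = \mathrm{id}_{I_1}$, which is a bijection and trivially satisfies $E_1 = E_1 \circ \mathrm{id}_{I_1}$ and $W_1 = W_1 \circ \mathrm{id}_{I_1}$. For symmetry, suppose $f : I_1 \to I_2$ witnesses $(G_1, W_1) \sim_{V,S} (G_2, W_2)$. Then $f^{-1} : I_2 \to I_1$ is again a bijection, and composing the identities $E_1 = E_2 \circ f$ and $W_1 = W_2 \circ f$ with $f^{-1}$ on the right gives $E_1 \circ f^{-1} = E_2$ and $W_1 \circ f^{-1} = W_2$, so $f^{-1}$ witnesses the reverse relation. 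For transitivity, given witnessing bijections $f : I_1 \to I_2$ and $g : I_2 \to I_3$, I would take $g \circ f : I_1 \to I_3$, which is a bijection as a composition of bijections; associativity of composition then yields $E_3 \circ (g \circ f) = (E_3 \circ g) \circ f = E_2 \circ f = E_1$, and identically $W_3 \circ (g \circ f) = W_1$.

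There is no serious obstacle in this proof: the statement is essentially the standard fact that ``isomorphism'' in any category-like setting defines an equivalence relation, with the only mild point being that the bijection must respect both the edge map and the weighting simultaneously. This causes no difficulty because the same bijection can be required to satisfy both equations, and the three verifications above treat the two compatibility conditions entirely in parallel.
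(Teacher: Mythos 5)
Your proof is correct: the identity, inverse, and composite bijections verify reflexivity, symmetry, and transitivity, and the two compatibility equations $E = E' \circ f$ and $W = W' \circ f$ are preserved in each case exactly as you check. The paper omits the proof as a routine verification, and your argument is precisely the standard one it implicitly relies on, so nothing further is needed.
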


\begin{theorem}\label{theorem:hypergraphs-rectangular-simplices-correspondence}
    Let $\mathfrak{P} \subset \ZZ_+$ be the set of all prime numbers. There is a one-to-many correspondence between rectangular $n$-simplices and hypergraphs $G = ([n], E)$ with $\mathfrak{P}$-weighted edges such that, for all $p \in \mathfrak{P}$, the set of edges of the partial hypergraph generated by $W^{-1}(p)$ is totally ordered by inclusion. This correspondence is one-to-one after quotienting the right-hand side by $\sim_{[n], \mathfrak{P}}$.
\end{theorem}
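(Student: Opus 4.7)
The plan is to construct mutually inverse maps (up to the equivalence $\sim_{[n], \mathfrak{P}}$) between rectangular $n$-simplices and the prescribed class of $\mathfrak{P}$-weighted hypergraphs on vertex set $[n]$.

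The forward map $\Phi$ is defined via $p$-adic valuations. Given $\lambda = (\lambda_1, \ldots, \lambda_n) \in \ZZ_+^n$, for each prime $p$ dividing some $\lambda_i$ and each integer $k$ with $1 \leq k \leq \max_{i \in [n]} v_p(\lambda_i)$, create an edge $e_{p,k} = \{i \in [n] : p^k \mid \lambda_i\}$ with weight $p$. Index these in some arbitrary order to obtain a sequence $E: I \to \mathcal{P}([n]) \setminus \emptyset$ and a weighting $W: I \to \mathfrak{P}$. Each edge is non-empty by the choice of $k$, and for any fixed prime $p$ the edges of weight $p$ satisfy $e_{p,1} \supseteq e_{p,2} \supseteq \cdots$, which is automatically totally ordered by inclusion. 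Since only finitely many primes divide any $\lambda_i$, the index set $I$ is finite.

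The backward map $\Psi$ is defined by $\lambda_i = \prod_{j \in I,\, i \in E(j)} W(E(j))$, a finite product in $\ZZ_+$ since $I$ is finite and each $W(E(j))$ is a prime. I would then verify $\Psi \circ \Phi = \mathrm{id}$: for each $i$ and each prime $p$, the number of edges of $\Phi(\lambda)$ of weight $p$ containing $i$ equals $\#\{k \geq 1 : p^k \mid \lambda_i\} = v_p(\lambda_i)$, so the product $\prod_p p^{v_p(\lambda_i)}$ recovers $\lambda_i$ by unique factorisation. For the other direction, given a hypergraph $(G, W)$ satisfying the hypotheses, fix a prime $p$ and enumerate the edges of weight $p$ as a chain $f_1 \supseteq f_2 \supseteq \cdots \supseteq f_{m_p}$ (the total ordering gives such an enumeration, and it is unique up to reordering positions carrying equal sets). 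Then $\lambda_i \coloneqq \Psi(G, W)_i$ satisfies $v_p(\lambda_i) = \#\{k : i \in f_k\}$, and since the $f_k$ form a chain, this equals the largest $k$ with $i \in f_k$. Hence $\{i : v_p(\lambda_i) \geq k\} = f_k$, so $\Phi(\Psi(G, W))$ produces exactly the same (unordered) collection of weighted edges.

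The one-to-many part of the statement, and the need to quotient by $\sim_{[n], \mathfrak{P}}$, both come from the fact that $E$ is a \emph{sequence} rather than a set of edges: any permutation of $I$ preserving incidence and weight gives the same $\lambda$ under $\Psi$, and conversely two hypergraphs with identical multisets of weighted edges on $[n]$ are related by such a bijection of their index sets and are therefore $\sim_{[n], \mathfrak{P}}$-equivalent by Lemma \ref{lemma:equivalence-relation}. The main subtlety, and the step requiring care, is the case where the chain for some prime $p$ contains repeated edges (which corresponds to some $v_p(\lambda_i)$ being constant across a range of $k$): one must verify that repeated edges are permitted on both sides of the correspondence and that the sequence-based definition of $E$, together with the equivalence $\sim$, correctly absorbs these repetitions without breaking either the chain condition or the bijectivity on equivalence classes.
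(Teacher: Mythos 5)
Your construction is exactly the paper's: edges $e_{p,k}=\{i: p^k\mid\lambda_i\}$ weighted by $p$ in one direction, and multiplying the all-ones vector by $W(E(i))$ over incident edges in the other, with the chain condition guaranteeing the two procedures are mutually inverse up to $\sim_{[n],\mathfrak{P}}$. The paper states the inverse verification as immediate, whereas you spell it out (including the repeated-edge case), but the approach is the same and your argument is correct.
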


\begin{proof}
    Left to right: draw a vertex for each index $j \in [n]$ of $\lambda$. Draw an edge $e_{p, a}$ for each $(p, a) \in \mathfrak{P} \times \mathbb{Z^+}$ such that $p^a | \lambda_j$ for some $j \in [n]$ containing all such $j \in [n]$. Set $W(e_{p, a}) = p$.

    Right to left: Start with $\lambda = (1, \ldots, 1) \in \ZZ_+^n$. For each $i \in I$ and $j \in E(i)$, multiply $\lambda_j$ by $p = W(E(i))$.

    Both procedures are clearly well-defined and inverses of each other.
\end{proof}

\begin{figure}[ht]
    \centering
    \begin{subfigure}[t]{0.49\textwidth}
        \centering
        \includesvg[height=1.25in]{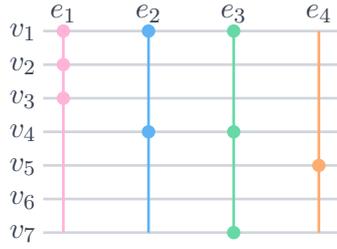}
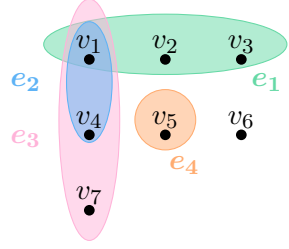
        \caption{Incidence matrix representation of $G$.}
    \end{subfigure}
    \hfill
    \begin{subfigure}[t]{0.49\textwidth}
        \centering
        \begin{tikzpicture}
          % edges
          \draw [fill=mint-green, fill opacity = 0.5, draw=mint-green, text=mint-green, text opacity = 1] (0,1) ellipse [x radius=1.6, y radius=0.4]
            node [below right, xshift=1cm, yshift=-0.25cm] {$\bm{e_1}$};
          \draw [fill=pastel-pink, fill opacity = 0.5, draw=pastel-pink, text=pastel-pink, text opacity = 1] (-1,0) ellipse [x radius=0.4, y radius=1.6]
            node [below left, xshift=-0.5cm, yshift=0cm] {$\bm{e_3}$};
          \draw [fill=soft-blue, fill opacity = 0.5, draw=soft-blue, text=soft-blue, text opacity = 1] (-1,0.5) ellipse [x radius=0.3, y radius=0.8]
            node [left, xshift=-0.5cm, yshift=0cm] {$\bm{e_2}$};
          \draw [fill=peach, fill opacity = 0.5, draw=peach, text=peach, text opacity = 1] (0,0) ellipse [x radius=0.4, y radius=0.4]
            node [below, xshift=0.25cm, yshift=-0.35cm] {$\bm{e_4}$};

          % vertices
          \node (A) at (-1,1) {$v_1$};
          \node (B) at (0,1) {$v_2$};
          \node (C) at (1,1) {$v_3$};
          \node (D) at (-1,0) {$v_4$};
          \node (E) at (0,0) {$v_5$};
          \node (F) at (1,0) {$v_6$};
          \node (G) at (-1,-1) {$v_7$};

          \foreach \v in {A,B,C,D,E,F,G} {
            \fill (\v) circle [radius=2pt, xshift=0cm, yshift=-0.2cm];
          }
        \end{tikzpicture}
        \caption{Venn diagram representation of $G$.}
    \end{subfigure}
    \caption{Two representations of a hypergraph $G$.}
    \label{figure:hypergraph-wikipedia}
\end{figure}

\begin{example}
    The hypergraph $G$ in Figure \ref{figure:hypergraph-wikipedia} corresponds to rectangular simplices of the form
    \[
    \Delta(\lambda) = \Delta(p_1p_2p_3,~~p_1,~~p_1,~~p_2p_3,~~p_4,~~1,~~p_3),
    \]
    with $p_i \in \mathfrak{P}$ and $p_i \neq p_j$ for $i \neq j, \{i, j\} \neq \{2, 3\}$. In other words, $p_2$ and $p_3$ may be equal -- this is a consequence of the fact that $e_2 \subset e_3$.
\end{example}

The following are numerical consequences involving the weighting on the edges of the hypergraph.

\begin{proposition}\label{proposition:conditions-depending-on-weighting}
    Let $\Delta(\lambda) = \Delta(\lambda_1, \ldots, \lambda_n)$ be a rectangular simplex and let $G$ be the corresponding hypergraph with weighting map $W$.
    \begin{enumerate}[1.]
        \item The greatest common divisor and least common multiple of a subsequence $(\lambda_{j_1}, \ldots, \lambda_{j_m}) \subset (\lambda_1, \ldots, \lambda_n)$ of the entries of $\lambda$ satisfy
        \[
        \gcd(\lambda_{j_1}, \ldots, \lambda_{j_m}) = \prod_{\substack{i \in I\\\{j_1, \ldots, j_m\} \subseteq E(i)}} W(i)
        \]
        and
        \[
        \lcm(\lambda_{j_1}, \ldots, \lambda_{j_m}) = \prod_{\substack{i \in I\\\exists j \in \{j_1, \ldots, j_m\}:~~j \in E(i)}} W(i) = W(G_{\{j_1, \ldots, j_m\}}).
        \]
        \item If $I' \subseteq I$ is such that $E(i)$ covers $V$ for all $i \in I'$, then $W((V, E_{I'})) | \lambda$. Conversely, if $d | \lambda$, then there exists a corresponding $I' \subseteq I$. In particular, $\gcd(\lambda_1, \ldots, \lambda_n) = \prod_{\substack{i \in I\\V \subset E(i)}} W(i)$ (this also follows from 1.)
        \item Let $k \in \ZZ_+$. The rectangular simplex $\lambda$ has the $\LPE(k)$ property if and only if the corresponding hypergraph $G$ satisfies that
        \[
        \prod_{\substack{i \in I\\\{v, w\} \subseteq E(i)}} W(i) \geq k - 1, \quad \forall v, w \in V.
        \]
    \end{enumerate}
\end{proposition}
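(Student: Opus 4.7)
The plan is to work prime by prime, exploiting a key feature of the construction in Theorem~\ref{theorem:hypergraphs-rectangular-simplices-correspondence}: for each fixed prime $p$, the edges $e_{p,1}, e_{p,2}, \ldots$ form a nested decreasing chain, since $j \in e_{p,a}$ iff $p^a \mid \lambda_j$. In particular, the number of edges $e_{p,a}$ that contain a given vertex $j$ equals the exponent $v_p(\lambda_j)$ of $p$ in the prime factorisation of $\lambda_j$. This is the workhorse observation driving all three parts.

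For part 1, I would compute the exponent of each prime $p$ on both sides of the two identities. On the left these are $\min_k v_p(\lambda_{j_k})$ (for the gcd) and $\max_k v_p(\lambda_{j_k})$ (for the lcm). On the right, the nested chain above $p$ shows that the edges $e_{p,a}$ containing the whole set $\{j_1,\ldots,j_m\}$ are precisely those with $a \leq \min_k v_p(\lambda_{j_k})$, and the edges $e_{p,a}$ meeting $\{j_1,\ldots,j_m\}$ are precisely those with $a \leq \max_k v_p(\lambda_{j_k})$. Matching exponents prime by prime yields both equalities, and the identification with $W(G_{\{j_1,\ldots,j_m\}})$ is immediate from the definition of the restriction.

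Part 2 is then a corollary of part 1. The forward direction is direct: if each $E(i)$ with $i \in I'$ covers $V$, then $W(i)$ divides every $\lambda_j$, so $W((V,E_{I'})) = \prod_{i \in I'} W(i)$ divides each entry. For the converse, given $d$ dividing every entry of $\lambda$ (equivalently $d \mid \gcd(\lambda_1,\ldots,\lambda_n)$), I factor $d = \prod_p p^{a_p}$; by the gcd formula of part 1 applied to the full vertex set, we have $e_{p,b} = V$ exactly for $b \leq v_p(\gcd(\lambda_1,\ldots,\lambda_n))$, which is at least $a_p$ by hypothesis. Hence taking $I'$ to consist of the first $a_p$ such edges for each prime $p$ yields the desired index set. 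The ``in particular'' clause is part 1 applied to $\{j_1,\ldots,j_m\} = [n]$, and part 3 then follows immediately by combining the gcd formula of part 1, applied to two-element subsets $\{v,w\}$ (and singletons when $v=w$), with Proposition~\ref{proposition:lpe-characterisation}.

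The only conceptual point to get right is the nested chain structure of edges sharing a common prime weight: it is hard-wired into the construction and is what makes every ``how many edges contain this set'' count reduce cleanly to a minimum or maximum of $p$-adic valuations. Beyond that, each claim amounts to bookkeeping.
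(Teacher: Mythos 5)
Your proof is correct, and it is exactly the bookkeeping the paper leaves implicit: the proposition is stated without proof as a direct consequence of the construction in Theorem~\ref{theorem:hypergraphs-rectangular-simplices-correspondence}, and your prime-by-prime comparison of $p$-adic valuations (edges $e_{p,a}$ containing a vertex $j$ being exactly those with $a \leq v_p(\lambda_j)$) is the intended argument. The only place to phrase carefully is the forward direction of part~2: ``each $W(i)$ divides every $\lambda_j$, so the product does'' is not valid for arbitrary collections of divisors, but it is valid here because the weight-$p$ edges covering $V$ form the initial segment $e_{p,1},\ldots,e_{p,c_p}$ with $c_p = \min_j v_p(\lambda_j)$, so the $p$-exponent of the product is at most $c_p$ --- a fact your nested-chain observation already supplies.
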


The following conditions surprisingly \emph{do not} depend on the weighting on the edges of the hypergraph, and are largely consequences of Proposition \ref{proposition:conditions-depending-on-weighting} and \cite{BrunsGubeladze:1999}.

\begin{proposition}\label{proposition:conditions-not-depending-on-weighting}
    Let $\Delta(\lambda) = \Delta(\lambda_1, \ldots, \lambda_n)$ be a rectangular simplex and let $G$ be the corresponding hypergraph. Then:
    \begin{enumerate}[1.]
        \item If $G$ has at most two vertices, then $\Delta(\lambda)$ is normal.
        \item If $G$ has at most three vertices, then $\Delta(\lambda)$ is normal if and only if $\Delta(\lambda)$ is very ample.
        \item Let $(i, j) \in V^2$. Then, $\lambda_i | \lambda_j$ if and only if the edges of $G_{\{i\}}$ are a subsequence of the edges of $G_{\{j\}}$ (Condition 1). In particular, if $G$ has three vertices and there exists a pair $(i, j) \in V^2$ satisfying Condition 1, then $\Delta(\lambda)$ is normal. Also, if $(n - 2, n)$ and $(i, i + 1)$ for all $i \in [n - 2]$ satisfy Condition 1, then $\Delta(\lambda)$ is normal and koszul.
        \item The entries of $\lambda$ are setwise coprime if and only if no edge of $G$ covers $V$.
        \item If $G$ is $1$-uniform, then the entries of $\lambda$ are pairwise coprime. If $G$ is $n$-uniform, then $\lambda$ is of the form $\lambda = (\lambda', \ldots, \lambda')$ for some $\lambda' \in \ZZ_+$ (and so $\Delta(\lambda)$ is normal).
        \item If $\# \{i \in I: V \subseteq E(i)\} \geq \log_2(n - 1)$, then $\Delta(\lambda)$ is normal, and if $\# \{i \in I: V \subseteq E(i)\} \geq \log_2(n)$, then $\Delta(\lambda)$ is koszul.
        \item If the $2$-section of $G$ is the complete graph $K_n$, then $\Delta(\lambda)$ has the $\LPE(3)$ property.
        \item If $\min_{(v, w) \in V^2} (\# \{i \in I: \{v, w\} \subseteq E(i)\}) \geq \log_2(k - 1)$, then $\Delta(\lambda)$ has the $\LPE(k)$ property. In particular, $\geq \log_2(n - 1)$ implies very ampleness and $\geq \log_2(4n(n + 1) - 1)$ implies normality.
        \item $\Delta(\lambda)$ is normal if and only if the rectangular simplex corresponding to $G_{\cup_{i \in I}E(i)}$ is normal. In particular, if $\# \cup_{i \in I} E(i) \leq 2$, then $\Delta(\lambda)$ is normal, and if $\# \cup_{i \in I} E(i) \leq 3$, then $\Delta(\lambda)$ is normal if and only if $\Delta(\lambda)$ is very ample.
    \end{enumerate}
\end{proposition}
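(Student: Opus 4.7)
The plan is to handle the nine items uniformly, by translating each combinatorial condition on the hypergraph $G$ into an arithmetic statement about $\lambda$ via Proposition~\ref{proposition:conditions-depending-on-weighting}, and then invoking the appropriate previously established normality, very ampleness, or $\LPE(k)$ criterion. Since the weighting has already been absorbed into the formulas of Proposition~\ref{proposition:conditions-depending-on-weighting}, the resulting arithmetic statements end up depending only on the underlying incidence structure of $G$, which is precisely what the proposition asserts.

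I would first dispatch items (1), (2), (5), and (9) as near-direct reductions. For (1) and (2), the hypothesis forces $n \leq 2$ or $n \leq 3$, so $\Delta(\lambda)$ is a lattice segment, a lattice polygon (automatically normal), or a three-dimensional simplex, where normality is equivalent to very ampleness by Ogata's theorem recalled after Example~\ref{example:Ogata-counterexample}. For (5), $1$-uniformity says directly that no prime lies in two different $\lambda_i$, while $n$-uniformity combined with Proposition~\ref{proposition:conditions-depending-on-weighting}(2) forces $\gcd(\lambda_1, \ldots, \lambda_n)$ to equal the common value of the entries, so $\lambda = (\lambda', \ldots, \lambda')$ and $\Delta(\lambda) = \lambda' \cdot \Delta(1, \ldots, 1)$ is a dilation of the standard simplex. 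For (9), any $j \notin \bigcup_{i \in I} E(i)$ corresponds to $\lambda_j = 1$, so iterated application of Proposition~\ref{proposition:normality-extensions} strips those trivial coordinates, after which items (1) and (2) apply to the restriction.

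Next I would treat (4), (6), (7), and (8), all of which follow from the gcd formulas in Proposition~\ref{proposition:conditions-depending-on-weighting}(1)--(2). Item (4) is immediate from the global gcd formula. For (7) and (8), since every edge weight is a prime $\geq 2$, the number of edges containing a pair $\{v, w\}$ bounds $\log_2 \gcd(\lambda_v, \lambda_w)$ from below, and combining this with the $\LPE(k)$ characterisation in Proposition~\ref{proposition:lpe-characterisation} gives both statements, with (7) being the special case $k = 3$. For (6), applying the same bound globally yields $d \coloneqq \gcd(\lambda_1, \ldots, \lambda_n) \geq n - 1$; writing $\Delta(\lambda) = d \cdot \Delta(\lambda/d)$ and applying Ewald--Wessels (Theorem~\ref{theorem:EW}) gives normality, and for the koszul refinement I would substitute the koszul analogue of Ewald--Wessels found in \cite{BrunsGubeladze:1999}.

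The hard part will be (3). The first equivalence---that $\lambda_i \mid \lambda_j$ iff the sequence of edges incident to $i$ is a subsequence of that at $j$---unwinds directly from the construction of the edges $e_{p, a}$ in Theorem~\ref{theorem:hypergraphs-rectangular-simplices-correspondence}, since each prime power $p^a$ dividing $\lambda_j$ contributes one such edge and $\lambda_i \mid \lambda_j$ says precisely that every edge incident to $i$ is also incident to $j$. The ensuing normality and koszulness consequences, both for $n = 3$ with a single divisibility pair and for the chain hypothesis involving $(n-2, n)$ together with $(i, i+1)$ for all $i \in [n-2]$, are not combinatorially obvious and will have to be imported from \cite[Proposition 2.1, Proposition 2.4]{BrunsGubeladze:1999}; the hypergraph conditions are the pictorial restatement of the divisibility hypotheses appearing there. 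Verifying carefully that the correspondence between a chain of divisibilities and a nested incidence pattern survives the unfolding of prime-power edges is the only genuinely nontrivial bookkeeping step.
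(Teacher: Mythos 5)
Your proposal is correct and follows essentially the same route as the paper, whose proof is just the same list of reductions and citations: lattice polygons are normal, Ogata for three entries, Bruns--Gubeladze Propositions 2.1 and 2.4 for items 3 and 6, Proposition \ref{proposition:conditions-depending-on-weighting} for items 4--8 (combined with Proposition \ref{proposition:lpe-characterisation}, Theorem \ref{theorem:Payne-LPE} and Theorem \ref{theorem:gubeladze-lpe} for the consequences in items 7--8), and Proposition \ref{proposition:normality-extensions} together with items 1--2 for item 9. The only cosmetic differences are that you use Ewald--Wessels (Theorem \ref{theorem:EW}) instead of Bruns--Gubeladze for the normality half of item 6, and in item 9 you should note, as the paper does at the start of Subsection \ref{subsection:subsequences-and-extensions}, that Proposition \ref{proposition:normality-extensions} also holds with very ampleness in place of normality, which is needed to transfer the ``normal iff very ample'' equivalence back from the restricted simplex.
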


\begin{proof}
    \begin{enumerate}[1.]
        \item All convex lattice polygons are normal.
        \item This follows from \cite{Ogata:2005}.
        \item This is equivalent to \cite[Proposition 2.1 (c)]{BrunsGubeladze:1999} and \cite[Proposition 2.4 (b)]{BrunsGubeladze:1999}.
        \item[4., 5.] These follow from Proposition \ref{proposition:conditions-depending-on-weighting} 2.
        \item[6.] This follows from Proposition \ref{proposition:conditions-depending-on-weighting} 2. and \cite[Proposition 2.4 (c)]{BrunsGubeladze:1999}.
        \item[7.] This follows from Proposition \ref{proposition:conditions-depending-on-weighting} 1.
        \item[8.] This follows from Proposition \ref{proposition:conditions-depending-on-weighting} 1., Theorem \ref{theorem:Payne-LPE} and \cite[Theorem 1.3]{Gubeladze:2012} (here, Theorem \ref{theorem:gubeladze-lpe}).
        \item[9.] This follows from Proposition \ref{proposition:normality-extensions} and 1., 2.
    \end{enumerate}
\end{proof}

\begin{definition}
    Let $a = (a_1, \ldots, a_n) \in \ZZ_+^n$ such that $\gcd(a_1, \ldots, a_n) = 1$. The \textbf{Frobenius number} $F(a)$ is the largest integer that cannot be represented as a linear combination of the $a_i$ with non-negative integer coefficients.
\end{definition}

\begin{proposition}\label{proposition:bounds}
    Let $a = (a_1, \ldots, a_n) \in \ZZ_+^n$ such that $\gcd(a_1, \ldots, a_n) = 1$ and $a_1~\leq~a_2~\leq~\cdots~\leq~a_n$. The following expressions are non-strict upper bounds for $F(a)$.
    \begin{enumerate}[1.]
        \item Erdős-Graham, \cite[Theorem 1]{ErdosGraham:1972}:
        \[
        2a_{n - 1} \left  \lfloor \frac{a_n}{n} \right \rfloor - a_n.
        \]
        \item Selmer, \cite{Selmer:1977}:
        \[
        2a_n \left  \lfloor \frac{a_1}{n} \right \rfloor - a_1.
        \]
        \item Brauer, \cite{Brauer:1942}:
        \[
        a_1 \frac{d_1}{d_2} + a_3 \frac{d_2}{d_3} + \cdots + a_n \frac{d_{n - 1}}{d_n} - \sum_{i = 1}^n a_n,
        \]
        where $d_i = \gcd(a_1, a_2, \ldots, a_i)$.
    \end{enumerate}
\end{proposition}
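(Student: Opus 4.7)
These three bounds are classical results in the theory of the Frobenius number, and each appears essentially as stated in the cited source. My plan is therefore structured around three independent arguments, invoking the primary references for the delicate parts, and organised by a single unifying framework that makes clear why each estimate arises.

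The common tool is the Apéry set of the numerical semigroup $S = \langle a_1, \ldots, a_n \rangle \subset \NN$ with respect to a chosen generator $a$: it consists of the smallest element of $S$ in each residue class modulo $a$. A standard lemma gives $F(a) = \max \mathrm{Ap}(S, a) - a$, so all three bounds reduce to bounding $\max \mathrm{Ap}(S, a)$ for an appropriately chosen $a$. For the Erd\H{o}s--Graham bound (1), I would take $a = a_n$ and partition the nonzero residues modulo $a_n$ into roughly $n$ blocks of size $\lfloor a_n/n \rfloor$, then argue via a pigeonhole-style computation using $a_1, \ldots, a_{n-1}$ that every residue class contains a representative of $S$ of size at most $2 a_{n-1} \lfloor a_n/n \rfloor$; this is carried out in detail in \cite[Theorem 1]{ErdosGraham:1972}. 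For the Selmer bound (2), I would perform the symmetric argument with $a = a_1$ and the roles of smallest and largest generator swapped, following \cite{Selmer:1977}.

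For the Brauer bound (3), my plan is induction on $n$ using the chain of gcds $d_1 \mid d_2 \mid \cdots \mid d_n$. At each step, $a_{i+1}$ refines $d_i$ to $d_{i+1} = \gcd(d_i, a_{i+1})$, and one can reduce to the semigroup generated by $a_1/d_i, \ldots, a_i/d_i$, extended by $a_{i+1}/d_{i+1}$, via a two-variable Frobenius-type estimate. Iterating this reduction along the chain and telescoping the quotients $d_i/d_{i+1}$ produces precisely the sum stated in (3); see \cite{Brauer:1942}.

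The main obstacle in a fully self-contained write-up would be the careful combinatorial bookkeeping in the Erd\H{o}s--Graham pigeonhole estimate, where keeping track of which residue classes are covered by which small multiples of $a_1, \ldots, a_{n-1}$ requires a non-trivial argument. Since all three bounds are classical and well documented, however, my proposal is to state the proposition as a direct compilation of the three results, indicating briefly the Apéry-set viewpoint and citing the original sources for the detailed arguments.
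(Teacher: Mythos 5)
Your proposal is correct and matches the paper's treatment: the paper offers no proof of this proposition, presenting it purely as a compilation of the classical Erd\H{o}s--Graham, Selmer and Brauer bounds with citations to the original sources, exactly as you propose. Your added Ap\'ery-set sketch is a reasonable summary of how those classical proofs go, but nothing beyond the citations is needed here.
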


\begin{lemma}\label{lemma:d}
    Let $a = (a_1, \ldots, a_n) \in \ZZ_+^n$, let $L = \lcm(a_1, \ldots, a_n)$. Then,
    \[
    d = \gcd{ \left ( \frac{L}{a_1}, \frac{L}{a_2}, \ldots, \frac{L}{a_n} \right )} = 1.
    \]
\end{lemma}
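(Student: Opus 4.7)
The plan is to prove the claim by contradiction, using the defining minimality property of the least common multiple. Suppose for contradiction that $d > 1$, so that some prime $p$ divides $d$. Then $p \mid L/a_i$ for every $i \in [n]$, which means that $L/p$ is an integer divisible by each $a_i$. Hence $L/p$ is a common multiple of $a_1, \ldots, a_n$, contradicting the fact that $L$ is the least common multiple, since $L/p < L$ for $p \geq 2$.

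Alternatively, one can give a direct prime-by-prime computation of valuations. For any prime $p$, write $v_p$ for the $p$-adic valuation. By the standard formula for the least common multiple, $v_p(L) = \max_{j \in [n]} v_p(a_j)$, so
\[
v_p(L/a_i) = v_p(L) - v_p(a_i) = \max_{j \in [n]} v_p(a_j) - v_p(a_i).
\]
Taking the minimum over $i \in [n]$ gives $v_p(d) = \min_{i \in [n]} v_p(L/a_i) = 0$, since the index $i$ achieving the maximum of $v_p(a_i)$ contributes $0$. As this holds for every prime $p$, we conclude $d = 1$.

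There is no real obstacle here: the lemma is essentially a restatement of the minimality of $L = \lcm(a_1, \ldots, a_n)$, and the contradiction argument above is the shortest path. I would present the proof in the contradiction form for clarity, as it makes the role of the lcm transparent, and it requires no case analysis on valuations.
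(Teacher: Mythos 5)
Your proposal is correct. Your second argument (the prime-by-prime valuation computation, picking the index $i$ that attains $\max_j v_p(a_j)$ so that $v_p(L/a_i)=0$) is essentially the paper's own proof, which runs by contradiction on a prime $p \mid d$ and the index of maximal $p$-adic valuation. Your preferred first argument is a slightly different and equally valid route: from $p \mid L/a_i$ for all $i$ you deduce that $L/p$ is a common multiple of the $a_i$, contradicting the minimality of the least common multiple. This version trades the valuation bookkeeping for the defining property of the lcm and is, if anything, a bit cleaner; the only (trivially satisfied) point to note is that $p \mid L$, so $L/p$ is indeed a positive integer. Either write-up would serve the paper's purposes equally well.
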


\begin{proof}
    Let $p \in \ZZ_+$ be a prime dividing $d$. Since it appears in $L$ with exponent $\geq 1$, it appears in some $a_j$ with exponent $\geq 1$. Let $i \in [n]$ be such that $v_p(a_i) = \max_j v_p(a_j) = v_p(L)$. But then $p$ does not divide $L/a_i$, a contradiction. 
\end{proof}

\begin{proposition}\label{proposition:a1n-characterisation}
    Let $L = \lcm(\lambda_1, \ldots, \lambda_n), L_i = \frac{L}{\lambda_i}, d = \gcd(L_1, \ldots, L_n)$. The following are equivalent:
    \begin{enumerate}[1.]
        \item The rectangular simplex $\Delta(\lambda)$ is A1N.
        \item $L - d$ is a linear combination of the $L_i$ with non-negative integer coefficients. This is the definition in \cite{BrunsGubeladze:1999} (here, Definition \ref{definition:a1n}).
        \item $L - 1$ is a linear combination of the $L_i$ with non-negative integer coefficients.
    \end{enumerate}
    Furthermore, if $L - 1 > F(L_1, \ldots, L_n)$, then the rectangular simplex $\lambda$ is A1N.
\end{proposition}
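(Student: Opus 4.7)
The proof is essentially immediate once Lemma \ref{lemma:d} is available, and my plan is to exploit it to collapse the three conditions onto a single statement.

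First, I would invoke Lemma \ref{lemma:d} to conclude that $d = \gcd(L_1, \ldots, L_n) = 1$. This identifies $L - d$ with $L - 1$, so conditions 2 and 3 are literally the same statement. The equivalence $1 \iff 2$ is nothing other than Definition \ref{definition:a1n} (it is the phrasing of almost 1-normality from \cite{BrunsGubeladze:1999}), so the three-way equivalence $1 \iff 2 \iff 3$ follows at once.

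For the final clause, the plan is to appeal directly to the definition of the Frobenius number. Since Lemma \ref{lemma:d} gives $\gcd(L_1, \ldots, L_n) = 1$, the quantity $F(L_1, \ldots, L_n)$ is well-defined, and by definition every integer strictly larger than $F(L_1, \ldots, L_n)$ is expressible as a non-negative integer combination of the $L_i$. Hence if $L - 1 > F(L_1, \ldots, L_n)$, then $L - 1$ admits such a representation, which is exactly condition 3, and the already-established equivalence yields almost 1-normality of $\Delta(\lambda)$.

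There is no real obstacle here; the proposition is a clean repackaging whose point is to replace the quantity $d$ appearing in the Bruns--Gubeladze definition by the constant $1$, thereby making the classical upper bounds for Frobenius numbers collected in Proposition \ref{proposition:bounds} directly applicable as sufficient criteria for almost 1-normality. The only substantive input, namely Lemma \ref{lemma:d}, has already been proved, so what remains is purely a matter of recording the implications in the order above.
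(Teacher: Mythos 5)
Your argument matches the paper's proof exactly: the equivalence of the three conditions is obtained by applying Lemma \ref{lemma:d} to identify $d$ with $1$, and the final implication is read off from the definition of the Frobenius number. The proposal is correct and essentially identical to the paper's (briefer) proof.
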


\begin{proof}
    The equivalence follows from Lemma \ref{lemma:d}. The implication follows from the definition of the Frobenius number.
\end{proof}

\begin{figure*}[ht]
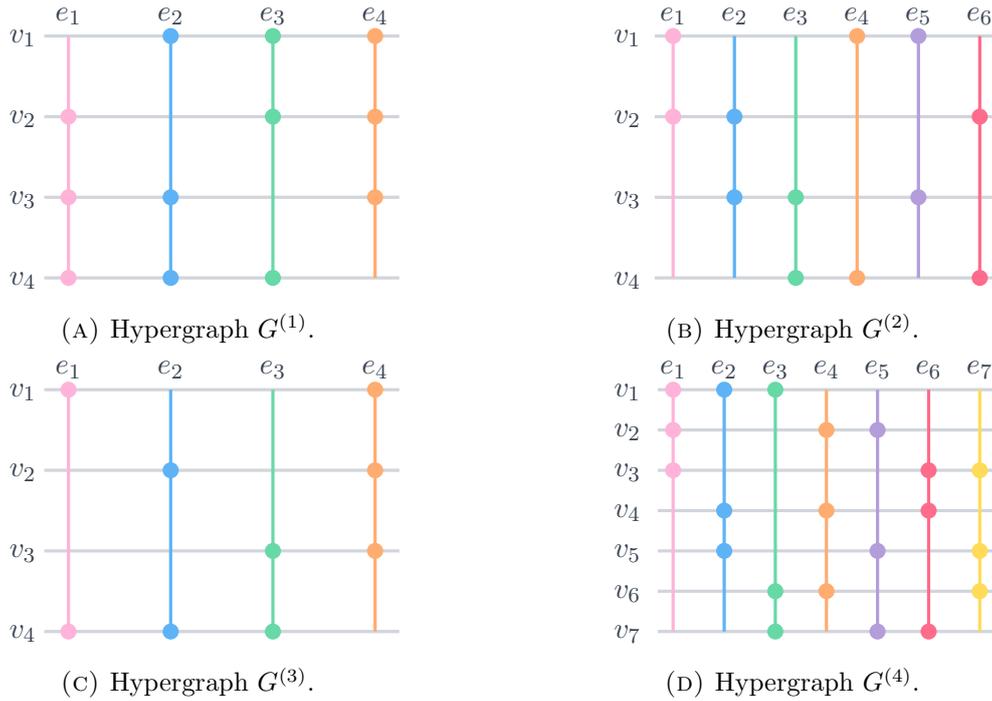

        \centering
        \begin{subfigure}[t]{0.49\textwidth}
            \centering
            \includesvg[height=1.5in]{hypergraph-example-1.svg}
            \caption{Hypergraph $G^{(1)}$.}
        \end{subfigure}
        \hfill
        \begin{subfigure}[t]{0.49\textwidth}
            \centering
            \includesvg[height=1.5in]{hypergraph-example-2.svg}
            \caption{Hypergraph $G^{(2)}$.}
        \end{subfigure}
        \medskip
        \begin{subfigure}[t]{0.49\textwidth}
            \centering
            \includesvg[height=1.5in]{hypergraph-example-3.svg}
            \caption{Hypergraph $G^{(3)}$.}
        \end{subfigure}
        \hfill
        \begin{subfigure}[t]{0.49\textwidth}
            \centering
            \includesvg[height=1.5in]{hypergraph-example-4.svg}
            \caption{Hypergraph $G^{(4)}$.}
        \end{subfigure}
        \caption{Some example hypergraphs. Hypergraph $G_2$ corresponds to the complete graph $K_4$, and hypergraph $G_4$ corresponds to the \emph{Fano plane}.}\label{figure:example-hypergraphs}
    \end{figure*}

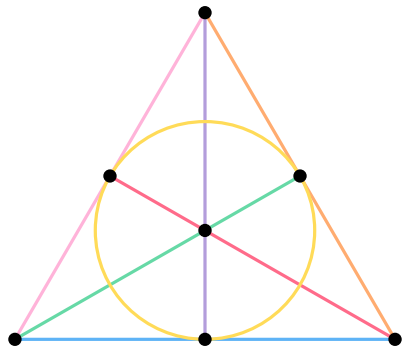
\begin{figure}[ht]
    \centering
    \begin{tikzpicture}[scale=1.25]
      % coordinates for an equilateral triangle of side 4
      \coordinate (v1) at (-2,0);
      \coordinate (v4) at (2,0);
      \coordinate (v2) at (0,3.4641);       % 2*sqrt(3)
    
      % midpoints of edges
      \coordinate (v5) at (0,0);
      \coordinate (v6) at (1,1.7321);      % sqrt(3)/2*2
      \coordinate (v3) at (-1,1.7321);
    
      % incenter/centroid
      \coordinate (v7) at (0,1.1547);        % (2*sqrt(3))/3
    
      % edges
      \draw[very thick, pastel-pink] (v1)--(v2);
      \draw[very thick, soft-blue] (v1)--(v4);
      \draw[very thick, mint-green] (v1)--(v6);
      \draw[very thick, peach] (v2)--(v4);
      \draw[very thick, lavender] (v2)--(v5);
      \draw[very thick, coral] (v3)--(v4);
    
      % circle
      \draw[very thick, yellow] (v7) circle[radius=1.1547];
    
      % vertices
      \fill (v1) circle[radius=2pt];
      \fill (v2) circle[radius=2pt];
      \fill (v3) circle[radius=2pt];
      \fill (v4) circle[radius=2pt];
      \fill (v5) circle[radius=2pt];
      \fill (v6) circle[radius=2pt];
      \fill (v7) circle[radius=2pt];
    \end{tikzpicture}
    \caption{The \emph{Fano plane}.}
    \label{figure:fano-plane}
\end{figure}

\begin{example}\label{example:hypergraphs-lpe-a1n}
    The hypergraphs $G_1, \ldots, G_4$ in Figure \ref{figure:example-hypergraphs} correspond respectively to rectangular simplices of the form
    \begin{align*}
        \Delta(\lambda^{(1)}) &= \Delta(p_2p_3p_4,~~p_1p_3p_4,~~p_1p_2p_4,~~p_1p_2p_3), \\
        \Delta(\lambda^{(2)}) &= \Delta(p_1p_4p_5,~~p_1p_2p_6,~~p_2p_3p_5,~~p_3p_4p_6), \\
        \Delta(\lambda^{(3)}) &= \Delta(p_1p_4,~~p_2p_4,~~p_3p_4,~~p_1p_2p_3), \\
        \Delta(\lambda^{(4)}) &= \Delta(p_1p_2p_3,~~p_1p_4p_5,~~p_1p_6p_7,~~p_2p_4p_6,~~p_2p_5p_7,~~p_3p_4p_7,~~p_3p_5p_6),
    \end{align*}
    with $p_i \in \mathfrak{P}$ and $p_i \neq p_j$ for $i \neq j$. Hypergraph $G_2$ corresponds to the complete graph $K_4$. Hypergraph $G_4$ corresponds to the \emph{Fano plane} (see Figure \ref{figure:fano-plane}). Hypergraphs $G_1$, $G_2$ and $G_3$ are easily generalisable to $n \geq 5$.

    The rectangular simplex $\Delta(\lambda^{(1)})$ has the $\LPE((\min_{i \neq j} p_ip_j) + 1)$ property. The rectangular simplices $\Delta(\lambda^{(2)}), \Delta(\lambda^{(3)}), \Delta(\lambda^{(4)})$ have the $\LPE((\min_{i} p_i) + 1)$ property.
    
    $\Delta(\lambda^{(1)})$ and $\Delta(\lambda^{(2)})$ are A1N due to Selmer's bound. $\Delta(\lambda^{(3)})$ is A1N due to Brauer's bound. The A1N property of $\Delta(\lambda^{(4)})$ can also be easily characterised using Brauer's bound.
\end{example}

\begin{theorem}\label{theorem:lpe-implies-a1n}
    Let $\Delta(\lambda) = \Delta(\lambda_1, \ldots, \lambda_n)$ be a rectangular simplex satisfying the $\LPE(n)$ property. Then $\Delta(\lambda)$ is A1N.
\end{theorem}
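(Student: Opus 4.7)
The plan is to bound the Frobenius number $F(L_1, \ldots, L_n)$ from above and then invoke Proposition \ref{proposition:a1n-characterisation}, which reduces A1N to the inequality $L - 1 > F(L_1, \ldots, L_n)$. Since $\gcd(L_1, \ldots, L_n) = 1$ by Lemma \ref{lemma:d}, the classical Frobenius bounds apply; I will use Brauer's bound (Proposition \ref{proposition:bounds}, item 3), which interacts naturally with chains of lcms.

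To set up Brauer, I reorder the entries so that $\lambda_1 \geq \lambda_2 \geq \cdots \geq \lambda_n$, equivalently $L_1 \leq L_2 \leq \cdots \leq L_n$, and set $\ell_i := \lcm(\lambda_1, \ldots, \lambda_i)$ and $d_i := \gcd(L_1, \ldots, L_i)$. A valuation-by-valuation comparison gives the identity $\gcd_{j \in S}(L_j) = L/\lcm_{j \in S}(\lambda_j)$ for any $S \subseteq [n]$, so $d_i = L/\ell_i$ and
\[
\frac{d_{i-1}}{d_i} = \frac{\ell_i}{\ell_{i-1}} = \frac{\lambda_i}{m_i}, \quad \text{where} \quad m_i := \gcd(\ell_{i-1}, \lambda_i).
\]
Brauer's bound then reads
\[
F(L_1, \ldots, L_n) \leq \sum_{i=2}^n L_i \cdot \frac{\lambda_i}{m_i} - L_1 = \sum_{i=2}^n \frac{L}{m_i} - L_1.
\]

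The crux, and what I expect to be the main load-bearing step, is the lower bound $m_i \geq n - 1$ for every $i \geq 2$. Since $\lambda_{i-1}$ divides $\ell_{i-1}$, any common divisor of $\lambda_{i-1}$ and $\lambda_i$ also divides $m_i$, so $m_i \geq \gcd(\lambda_{i-1}, \lambda_i) \geq n - 1$ by the $\LPE(n)$ property together with Proposition \ref{proposition:lpe-characterisation}. Substituting this bound yields the clean estimate
\[
F(L_1, \ldots, L_n) \leq (n-1) \cdot \frac{L}{n-1} - L_1 = L - L_1.
\]

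Finally, I split into two cases. If $L_1 \geq 2$, then $L - 1 > L - L_1 \geq F(L_1, \ldots, L_n)$, and Proposition \ref{proposition:a1n-characterisation} gives A1N. If $L_1 = 1$, then $\lambda_1 = L$ (so every $\lambda_j$ divides $\lambda_1$), and the expression $L - 1 = (\lambda_1 - 1) \cdot L_1$ displays A1N directly. The only remaining subtlety is the case of repeated $L_i$, where one applies Brauer only to the distinct values; since this merely reduces the number of summands, the bound $F \leq L - L_1$ still holds and the argument goes through unchanged.
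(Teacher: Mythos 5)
Your proposal is correct and follows essentially the same route as the paper: reduce A1N to the Frobenius inequality $L-1 > F(L_1,\ldots,L_n)$ via Lemma \ref{lemma:d} and Proposition \ref{proposition:a1n-characterisation}, apply Brauer's bound after sorting, rewrite $L_i\,d_{i-1}/d_i$ as $L/\gcd(\lcm(\lambda_1,\ldots,\lambda_{i-1}),\lambda_i)$, and use Proposition \ref{proposition:lpe-characterisation} to bound that gcd below by $n-1$. The only (harmless) divergence is at the end, where you keep only the $-L_1$ term and split into the cases $L_1\geq 2$ and $L_1=1$ (the latter handled directly), rather than the paper's comparison of $1/L$ with $\sum_i 1/\lambda_i$ — in fact your case split treats the $\lambda_1=L$ boundary a bit more carefully than the paper does.
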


\begin{proof}
    Lemma \ref{lemma:d} implies that $d = \gcd(L_1, \ldots, L_n) = 1$. Without loss of generality, we may assume $\lambda_1 \geq \lambda_2 \geq \cdots \geq \lambda_n$, so $L_1 \leq L_2 \leq \cdots \leq L_n$. Then, by Brauer's bound (Proposition \ref{proposition:bounds}) and Proposition \ref{proposition:a1n-characterisation}, if
    \begin{equation}\label{equation:brauer-a1n}
        L - 1 > L_2 \frac{d_1}{d_2} + L_3 \frac{d_2}{d_3} + \cdots + L_n \frac{d_{n - 1}}{d_n} - \sum_{i = 1}^n L_i,
    \end{equation}
    where $d_i = \gcd(L_1, \ldots, L_i)$, then the rectangular simplex $\Delta(\lambda)$ is A1N. For $i \in [n - 1]$, set $L'_i = \lcm(\lambda_1, \ldots, \lambda_i)$. Then,
    \begin{align*}
        L_{i + 1} \frac{d_i}{d_{i + 1}} &= \frac{L}{\lambda_{i + 1}} \frac{\gcd \left ( \frac{L}{\lambda_1}, \ldots, \frac{L}{\lambda_i} \right )}{\gcd \left ( \frac{L}{\lambda_1}, \ldots, \frac{L}{\lambda_{i + 1}} \right )} \\
        &= \frac{L}{\lambda_{i + 1}} \frac{\gcd \left ( \frac{L \frac{L'_i}{L'_i}}{\lambda_1}, \ldots, \frac{L\frac{L'_i}{L'_i}}{\lambda_i} \right )}{\gcd \left ( \frac{L\frac{L'_{i + 1}}{L'_{i + 1}}}{\lambda_1}, \ldots, \frac{L\frac{L'_{i + 1}}{L'_{i + 1}}}{\lambda_{i + 1}} \right )} \\
        &= \frac{L}{\lambda_{i + 1}} \frac{\frac{L}{L'_i}}{\frac{L}{L'_{i + 1}}} \\
        &= \frac{L L'_{i + 1}}{\lambda_{i + 1} L'_i} \\
        &= \frac{L L'_{i + 1}}{\lcm(\lambda_{i + 1}, L'_i)\gcd(\lambda_{i + 1}, L'_i)} \\
        &= \frac{L L'_{i + 1}}{L'_{i + 1} \lcm(\gcd(\lambda_{i + 1}, \lambda_1), \gcd(\lambda_{i + 1}, \lambda_2), \ldots, \gcd(\lambda_{i + 1}, \lambda_{i}))} \\
        &= \frac{L}{\lcm_{j \in [i]}(\gcd(\lambda_{i + 1}, \lambda_j))},
    \end{align*}
    where we have used Lemma \ref{lemma:d} again to get the third equality, and the fact that the positive integers form a distributive lattice to get the sixth equality. Therefore, Inequality \ref{equation:brauer-a1n} is equivalent to
    \[
        1 - \frac{1}{L} > \sum_{i = 1}^{n - 1} \frac{1}{\lcm_{j \in [i]}(\gcd(\lambda_{i + 1}, \lambda_j))} - \sum_{i = 1}^n \frac{1}{\lambda_i}.
    \]
    Since $\lambda_1 < L$, it is sufficient that
    \[
    1 \geq \frac{(n - 1)}{\min_{i \in [n - 1]}(\lcm_{j \in [i]}(\gcd(\lambda_{i + 1}. \lambda_j)))}.
    \]
    But the fact that $\Delta(\lambda)$ has the $\LPE(n)$ property, along with Proposition \ref{proposition:lpe-characterisation}, imply that $\gcd(\lambda_{i + 1}, \lambda_j) \geq n - 1$ for all $i, j$. The result follows.
\end{proof}

\begin{remark}
    The previous proof shows that the value $k = n$ in Theorem \ref{theorem:lpe-implies-a1n} is sharp. For example, by Corollary \ref{corollary:2-5-7-11-619}, the rectangular simplex $\Delta(\lambda) = \Delta(3 \cdot 2,~~3 \cdot 5,~~3 \cdot 7,~~3 \cdot 11,~~3 \cdot 619)$ is not A1N, but by Proposition \ref{proposition:lpe-characterisation}, it has the $\LPE(4)$ property.
\end{remark}

\section*{Further directions}
We would like to finish by repeating one of our motivating questions:

\begin{question}
    Is there a rectangular simplex that is very ample but not normal?
\end{question}

Theorem \ref{theorem:lpe-implies-a1n} implies that the $\LPE(n)$ property and almost 1-normality cannot easily be used to answer this question affirmatively. However, the methods in this section can still be used to find families of rectangular $n$-simplices satisfying the $\LPE(k)$ property for $n \leq k < 4n(n + 1)$. These simplices may be very ample and almost 1-normal but not normal (see Theorems \ref{theorem:Payne-LPE} and \ref{theorem:gubeladze-lpe}). We would be interested in any future research in this direction.

\bibliographystyle{amsalpha}
\bibliography{bib}

\end{document}